\title[The Novikov Conjecture and Group Extensions]{The Novikov conjecture and extensions of coarsely embeddable groups}
\author{Jintao Deng}
\address{Texas A\&M University, Department of Mathematics}
\email{jintaodeng@math.tamu.edu}
\newtheorem{thm}{Theorem}[section]
\newtheorem{prop}[thm]{Proposition}
\newtheorem{lem}[thm]{Lemma}
\newtheorem{defn}[thm]{Definition}
\theoremstyle{remark}
\newtheorem{rem}[thm]{Remark}
\begin{document}
\maketitle

\begin{abstract}
Let $1 \to N \to G \to G/N \to 1$ be a short exact sequence of countable discrete groups and let $B$ be any $G$-$C^*$-algebra. In this paper, we show that the strong Novikov conjecture with coefficients in $B$ holds for such a group $G$ when the normal subgroup $N$ and the quotient group $G/N$ are coarsely embeddable into Hilbert spaces. As a result, the group $G$ satisfies the Novikov conjecture under the same hypothesis on $N$ and $G/N$.
\end{abstract}

\section{Introduction}
The Novikov conjecture is an important problem in higher dimensional topology. It asserts that the higher signatures of a compact smooth manifold are invariant under orientation preserving homotopy equivalences. In the past few decades, noncommutative geometry has provided powerful techniques to study the Novikov conjecture. Using this approach, the Novikov conjecture has been proved for an extensive class of groups (c.f. \cite{Connes}, \cite{CM}, \cite{GHW}, \cite{HK}, \cite{KasSkan}, \cite{Kas_Yu}, \cite{Kas}, \cite{Mis}, \cite{Yu_fin_asym_dim}, \cite{Yu_coa_emb}, \cite{Yu2019}).

The Novikov conjecture is a consequence of the strong Novikov conjecture in the computation of the $K$-theory of group $C^*$-algebras. Given a countable discrete group $G$, there is a universal proper $G$-space $\mathcal{E}G$ which is unique up to equivariant homotopy equivalence (see \cite{BaCoHig}). Let $B$ be any $C^*$-algebra equipped with a $G$-action by $*$-automorphisms. The Baum--Connes assembly map for a countable discrete group $G$ and a $G$-$C^*$-algebra $B$ is a group homomorphism
$$
\mu:KK_*^G(\mathcal{E}G, B) \to K_*(B \rtimes_{\operatorname{r}}G),
$$
where $KK_*^G(\mathcal{E}G, B)$ is the equivariant $K$-homology with $G$-compact supports with coefficients in $B$ of the universal space $\mathcal{E}G$ for proper $G$-actions, and $K_*(B \rtimes_{\operatorname{r}} G)$ is the $K$-theory of the reduced crossed product $B \rtimes_{\operatorname{r}}G$ (see \cite{Kas}). In the special case when $B$ is the complex numbers $\mathbb{C}$ with trivial $G$-action, the Baum--Connes assembly map $\mu$ is a group homomorphism mapping each Dirac type operator to its higher index in $K_*(C^*_{\operatorname{r}}(G))$, where $C_{\operatorname{r}}^*(G)$ is the reduced group $C^*$-algebra. The Baum--Connes conjecture with coefficients in $B$ claims that $\mu$ is an isomorphism, while the strong Novikov conjecture with coefficients in $B$ claims that $\mu$ is injective. When $B$ is the complex numbers $\mathbb C$, this reduces to the usual Baum--Connes conjecture and strong Novikov conjecture, respectively.

Let us recall the concept of coarse embedding which was introduced by Gromov in \cite{Gromov}.
Let $X$ be a metric space, and $\mathcal{H}$ a Hilbert space. A map $\varphi: X \rightarrow \mathcal{H}$ is called a coarse embedding if there exist two non-decreasing functions $\rho_{-},~\rho_{+}: [0.+\infty) \rightarrow [0, +\infty)$, with $\rho_{-}(t) \leq \rho_{+}(t)$ for all $t\geq 0$, and $\displaystyle \lim_{t \to +\infty} \rho_{-}(t)=+\infty$, such that
\begin{center}
$\rho_{-}(d(x,y)) \leq \|\varphi(x)-\varphi(y)\|\leq \rho_{+}(d(x,y))$ for all $x,y \in X$.
\end{center}

Yu \cite{Yu_coa_emb} and Skandalis--Tu--Yu \cite{Skan_Tu_Yu} proved the Novikov conjecture for any group which admits a coarse embedding into Hilbert space. In \cite{Kas_Yu}, Kasparov and Yu strengthened this result, showing that the Novikov conjecture holds for groups which are coarsely embeddable into Banach spaces with property $(H)$.

In \cite{MR1817505}, Oyono-Oyono established a group extension result for the Baum--Connes conjecture. Let $N$ and $G$ be countable discrete groups, and $N$ a normal subgroup of $G$. Oyono-Oyono showed that if the quotient group $G/N$ and all subgroups of $G$ containing  $N$ with finite index satisfy the Baum--Connes conjecture, then $G$ satisfies the Baum--Connes conjecture. With this extension result, one can show that the Baum--Connes conjecture holds for a large class of groups. For instance, based on Higson and Kasparov's result on the Baum--Connes conjecture for a-T-menable groups (\cite{HK}), the result of Oyono-Oyono implies that the Baum--Connes conjecture holds for all extensions of a-T-menable groups.

To obtain an extension result for the Novikov conjecture, one might attempt to show that coarse embeddability into Hilbert space is closed under taking group extensions, and apply Yu's result (\cite{Yu_coa_emb}).  However, in \cite{Arth_Tessera}, Arzhantseva and Tessera constructed a finitely generated group $G$ which is not coarsely embeddable into Hilbert space, but has a normal subgroup $N$ such that $N$ and $G/N$ are coarsely embeddable into Hilbert spaces. Note that every subgroup of $G$ containing $N$ with finite index is also coarsely embeddable into Hilbert space. To obtain an analogue of the extension result of the Baum--Connes conjecture (\cite{MR1817505}), other techniques are needed to show that the group obtained from extension of coarsely embeddable groups satisfies the Novikov conjecture.

Our main results are the following.
\begin{thm}\label{main_result}
Let $1 \to N \to G \to G/N\to 1$ be a short exact sequence of countable discrete groups and $B$ a $G$-$C^*$-algebra. If $N$ and $G/N$ are coarsely embeddable into Hilbert spaces, then the strong Novikov conjecture holds for $G$ with coefficients in the $G$-$C^*$-algebra $B$, that is, the Baum--Connes assembly map
$$
\mu: KK^G_*(\mathcal{E}G, B) \to K_*(B \rtimes_{\operatorname{r}} G)
$$is injective, where $\mathcal{E}G$ is the universal space for proper $G$-action, and $B \rtimes_{\operatorname{r}} G$ is the reduced crossed product $C^*$-algebra.
\end{thm}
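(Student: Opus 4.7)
The plan is to prove injectivity of $\mu$ by the Dirac--dual-Dirac method, adapted to the extension structure. Because by \cite{Arth_Tessera} the group $G$ itself need not be coarsely embeddable, Yu's theorem \cite{Yu_coa_emb} and \cite{Skan_Tu_Yu} cannot be invoked directly for $G$. Instead, fixing coarse embeddings $\varphi_N\colon N \to \mathcal{H}_1$ and $\varphi_{G/N}\colon G/N \to \mathcal{H}_2$, the strategy is to construct a $G$-$C^*$-algebra $\mathcal{A}$ that incorporates both embeddings, together with a $G$-equivariant ``Bott-type'' element $\beta \in KK^G(B, B \,\hat{\otimes}\, \mathcal{A})$ and a dual-Dirac element $\alpha$, such that the assembly map $\mu_{B\,\hat{\otimes}\,\mathcal{A}}$ for the twisted coefficients is injective (because $\mathcal{A}$ will be proper in the sense of Kasparov, or because injectivity already holds on each coarsely embeddable piece). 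Injectivity of $\mu$ for $B$ should then follow from the functoriality of $\mu$ under $\beta_*$ together with the fact that $\beta_*$ induces an injective map on $KK_*^G(\mathcal{E}G,\,\cdot\,)$.

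First, following the Higson--Kasparov--Trout framework used in \cite{HK, Yu_coa_emb}, I would attach to each Hilbert space $\mathcal{H}_i$ the $\mathbb{Z}/2$-graded $C^*$-algebra $\mathcal{A}(\mathcal{H}_i) := \mathcal{S}\,\hat{\otimes}\,\mathrm{Cliff}(\mathcal{H}_i)$; the coarse embeddings then induce asymptotic morphisms from $C_0$ of the respective group into suitable Roe-type completions of $\mathcal{A}(\mathcal{H}_i)$. Second, assemble these into a single $G$-$C^*$-algebra $\mathcal{A}$ as a twisted section algebra, locally resembling $\mathcal{A}(\mathcal{H}_1)\,\hat{\otimes}\,\mathcal{A}(\mathcal{H}_2)$, in which $N$ translates the $\mathcal{H}_1$ factor via $\varphi_N$ and $G/N$ translates the $\mathcal{H}_2$ factor via $\varphi_{G/N}$, the two actions being glued compatibly through the extension. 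Third, construct $\beta$ by combining the individual Bott morphisms attached to $\varphi_N$ and $\varphi_{G/N}$ into a $G$-equivariant asymptotic morphism, and verify via a pairing with $\alpha$ that $\beta_*$ is injective on $KK_*^G(\mathcal{E}G, B)$, thereby upgrading injectivity of $\mu_{B\,\hat{\otimes}\,\mathcal{A}}$ to injectivity of $\mu_B$.

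The principal obstacle is the construction of $\mathcal{A}$ and the verification that the combined Bott map is genuinely $G$-equivariant. A set-theoretic section $s\colon G/N \to G$ of the quotient map produces only a cocycle-twisted combination of the two embeddings rather than an honest product coarse embedding $G \to \mathcal{H}_1 \oplus \mathcal{H}_2$, so a fibered bundle construction is required in which the cocycle is absorbed into the $\mathcal{A}(\mathcal{H}_1)$-fiber structure. Establishing that the resulting twisted $\mathcal{A}$ is proper enough for injectivity of $\mu_{B\,\hat{\otimes}\,\mathcal{A}}$ to follow from the strong Novikov conjectures for $N$ and $G/N$ already provided by \cite{Yu_coa_emb, Skan_Tu_Yu}, and that the Bott and dual-Dirac cycles remain compatible after this twist, is the technical heart of the proof; once these are in place, the remaining assembly-level manipulations should follow standard patterns from the dual-Dirac literature \cite{Kas_Yu, KasSkan, Kas}.
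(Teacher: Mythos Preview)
Your proposal has the right philosophy---twist by a coefficient algebra built from both coarse embeddings, prove a twisted assembly result, then untwist via a Bott map---but it contains a genuine gap at exactly the point you flag as the ``principal obstacle,'' and the paper's resolution is quite different from what you sketch.

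The difficulty is that a coarse embedding $\varphi_N\colon N\to\mathcal{H}_1$ does not give any isometric action of $N$ (let alone $G$) on $\mathcal{H}_1$, so there is no direct way to make $\mathcal{A}(\mathcal{H}_1)\,\widehat{\otimes}\,\mathcal{A}(\mathcal{H}_2)$ into a $G$-$C^*$-algebra, cocycle-twisted or otherwise. The paper does not attempt this. Instead it passes through the Skandalis--Tu--Yu groupoid machinery: from $\varphi_{G/N}$ one builds a compact $G/N$-space $X$ with $X\rtimes(G/N)$ a-T-menable, hence a continuous field $(\mathcal{H}_x)_{x\in X}$ with a proper affine isometric groupoid action and an associated $G$-$C^*$-algebra $\mathcal{A}(X,\mathcal{H})$; from $\varphi_N$ one builds a compact $G$-space $Y$ such that $Y\rtimes N'$ is a-T-menable for every subgroup $N'\le G$ containing $N$ with finite index. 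The twisted coefficient algebra is then $C(Y)\,\widehat{\otimes}\,\mathcal{A}(X,\mathcal{H})\,\widehat{\otimes}\,B$, and the two embeddings enter \emph{asymmetrically}.

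The second gap is your appeal to ``injectivity of $\mu_{B\widehat{\otimes}\mathcal{A}}$'' following from the strong Novikov conjectures for $N$ and $G/N$ separately. In the paper this step is a genuine argument (Theorem~\ref{thm_twist_BC}): the $G/N$-embedding makes the $G$-action on $\mathcal{A}(X,\mathcal{H})$ proper via an auxiliary space $W$, which allows one to cut the twisted Roe algebra into finitely many ideals supported on sets of the form $G\cdot V_i\cong V_i\times_{N_i}G$ with $[N_i:N]<\infty$. On each piece the twisted assembly map reduces to the ordinary assembly map for $N_i$ with coefficients in $C(Y)\,\widehat{\otimes}\,(\cdots)$, and this is an \emph{isomorphism} by Tu's theorem for the a-T-menable groupoid $Y\rtimes N_i$. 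A Mayer--Vietoris argument then yields the twisted isomorphism for $G$. Your proposal does not indicate any mechanism for this reduction-to-subgroups step, which is the technical heart of the paper.
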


Let $G$ be a countable discrete group with a coarsely embeddable normal subgroup $N\leq G$. Assume that the quotient group $G/N$ is also coarsely embeddable into Hilbert space. It follows from Theorem \ref{main_result} that the rational strong Novikov conjecture holds for $G$, that is, the Baum--Connes assembly map
$$
\mu: KK^G_*(\mathcal{E}G)\otimes\mathbb{Q} \to K_*(C^*_{\operatorname{r}} G) \otimes\mathbb{Q}
$$
is injective. We remark that the rational strong Novikov conjecture implies the Novikov conjecture on the homotopy invariance of higher signatures and the Gromov-Lawson-Rosenberg conjecture regarding the existence of positive scalar curvature on closed aspherical manifolds.

This paper is organized as follows. In Section 2, we introduce Roe algebras, the local index map and its relation to the Baum--Connes map. In Section 3, we recall the $C^*$-algebra associated with an infinite dimensional Euclidean space and the generalization to the field case. In Section 4, we define an a-T-menable groupoid associated to a coarsely embeddable group, and then define certain twisted Roe algebras and twisted localization algebras. In Section 5, we prove that the evaluation-at-zero map induces an isomorphism from the $K$-theory of twisted localization algebras to the $K$-theory of twisted Roe algebras for groups which are extensions of coarsely embeddable groups. In Section 6, we construct a geometric analogue of Higson--Kasparov--Trout's Bott map from the $K$-theory of localization algebras to the $K$-theory of twisted localization algebras. This geometric analogue of the Bott map is then used to reduce the Novikov conjecture to the twisted Baum--Connes conjecture for groups which are extensions of coarsely embeddable groups.

\section{The Baum--Connes map and localization}

In this section, we will first recall the definition of Roe algebras, and the Baum--Connes assembly map. We then move on to define the local index map and show the connection between the local index map and the Baum--Connes assembly map.

\subsection{Roe algebras}
Let $G$ be a countable discrete group, and $\Delta$ a locally compact metric space with a proper cocompact $G$-action. The action is proper if the map
\begin{center}
 $\Delta \times G \to \Delta \times \Delta$, via $(x, g) \mapsto (x, gx)$,
\end{center}
  is a proper map. A $G$-action is said to be cocompact if there exists a compact subset $\Delta_0 \subset \Delta$ such that $G \cdot \Delta_0=\Delta$. Let $C_0(\Delta)$ be the $C^*$-algebra of all continuous functions on $\Delta$, which vanish at infinity. Let $B$ be any $G$-$C^*$-algebra.

\begin{defn}[\cite{WilYu}]
Let $H$ be a Hilbert module over $B$ and let $\pi: C_0(\Delta)\to B(H)$ be a $*$-homomorphism from $C_0(\Delta)$ to $B(H)$, where $B(H)$ is the algebra of all adjointable operators on $H$. Let $T \in B(H)$ be an adjointable operator on $H$.
\begin{enumerate}
\item The support of $T$, denoted by $supp(T)$, is defined to be the complement (in $\Delta \times \Delta$) of all pairs $(x,y) \in \Delta \times \Delta$ for which there exist $f, g \in C_0(\Delta)$ with $f(x)\neq 0$ and $g(y) \neq 0$ such that $\pi(f)T \pi(g)=0$.
\item The propagation of $T$ is defined to be
$$\mbox{propagation}(T)=\sup\{d(x,y):(x,y) \in \text{supp}(T)\}.$$
If $\text{propagation}(T) <\infty$, the operator $T$ is said to have finite propagation.
\item The operator $T$ is said to be locally compact if $\pi(f)T$ and $T\pi(f)$ are in $K(H)$ for all $f \in C_0(\Delta)$, where $K(H)$ is defined to be the operator norm closure of all finite-rank operators on the Hilbert module $H$.
\end{enumerate}
\end{defn}

Let $H$ be a countably generated Hilbert module over $B$, and $U: G \to \mathcal{U}(H)$ a unitary representation of $G$. A $*$-homomorphism $\pi: C_0(\Delta) \to B(H)$ is said to be covariant if $\pi(\gamma f)=U_{\gamma} \pi(f) U_{\gamma^{-1}}$, for all $\gamma \in G$, $f \in C_0(\Delta)$. The triple $(C_0(\Delta), G, \pi)$ is called a covariant system. An operator $T \in B(H)$ is said to be $G$-invariant if $U_{\gamma}TU_{\gamma^{-1}}=T$, for all $\gamma \in G$. Let us also recall the definition of an admissible covariant system, more details can be found in \cite{WillettYuBook}.
\begin{defn}[\cite{WillettYuBook}]\label{DefnAdmissible}

A covariant system $(C_0(\Delta), G, \pi)$ is said to be admissible if
\begin{enumerate}
\item $H$ is isomorphic to $H_{\Delta}\otimes E \otimes B$ as $G$-Hilbert modules over $B$, where $H_{\Delta}$ and $E$ are Hilbert spaces,
\item  $\pi=\pi_0\otimes 1 $ for some $G$-equivariant $*$-homomorphism $\pi_0: C_0(\Delta) \to B(H_{\Delta})$, such that $\pi_0(f)$ is not in $K(H_{\Delta})$ for any non-zero function $f \in C_0(\Delta)$, and $\pi_0$ is non-degenerate in the sense that $\{\pi_0 \big(C_0(\Delta)\big)H_{\Delta}\}$ is dense in $H_{\Delta}$,
\item for every finite subgroup $F$ of $G$ and every $F$-invariant Borel subset $U$ of $\Delta$, $E$ is isomorphic to $\ell^2(F) \otimes H_U$ as $F$-Hilbert spaces for some Hilbert space $H_U$ with a trivial $F$-action.
\end{enumerate}
\end{defn}

Let $G$ be a countable discrete group, and $\Delta$ a locally compact metric space with a proper cocompact $G$-action and let $B$ be a $G$-$C^*$-algebra. There is always an admissible covariant system. Chose an infinite-dimensional separable Hilbert space $H_0$ and a countable dense $G$-invariant subset $X \subset \Delta$, then define
 $$H=\ell^2(X) \otimes H_0 \otimes \ell^2(G) \otimes B.$$
The tensor product $H$ is a Hilbert $B$-module with the $B$-valued inner product
$$\left\langle u \otimes a, v \otimes b\right\rangle =\left\langle u, v \right\rangle \cdot a^*b,$$
for all $u, v \in \ell^2(X)\otimes H_0 \otimes \ell^2(G)$ and $a, b \in B$.
The space $H$ is equipped with a right $B$-action by
$$\left(u \otimes a\right)b=u \otimes ab,$$
for all $u \otimes \ell^2(X) \otimes H_0\otimes \ell^2(G)$ and $a, b \in B$.
 In addition, the Hilbert $B$-module is endowed with the diagonal action of $G$ by
 $$U_g: \delta_z \otimes v \otimes \delta_h \otimes a \mapsto \delta_{gz} \otimes v \otimes \delta_{gh} \otimes g \cdot a,$$
 where $g, h \in G$, $a \in B$, $z \in X$. Define an action of $C_0(\Delta)$ by pointwise multiplication
 $$f: \delta_z \otimes v \otimes \delta_h \otimes a \mapsto f(z)\delta_z \otimes v \otimes \delta_h \otimes a.$$
Following Lemma 4.5.5 in \cite{WillettYuBook}, it is obvious that $\left(C_0(\Delta), G\right)$ is an admissible system.

Now we are ready to define the Roe algebra, following Roe \cite{Roe93}.
\begin{defn}
Let $\big(C_0(\Delta), G, \pi\big)$ be an admissible covariant system. The algebraic Roe algebra with coefficients in $B$, denoted by $C^*_{alg}(\Delta, G, B)$, is defined to the algebra of all the $G$-invariant, locally compact operators in $B(H)$ with finite propagation. The Roe algebra with coefficients in $B$, denoted by $C^*(\Delta, G, B)$, is the norm closure of $C^*_{alg}(\Delta, G, B)$ under the operator norm on $H$.
\end{defn}

Next, we will recall some basic properties of Roe algebras.
Let $\Delta_1$, $\Delta_2$ be two locally compact metric spaces with proper and isometric $G$-actions. A Borel map $f: \Delta_1 \to \Delta_2$ is $G$-equivariant if $f(g x)=g f(x)$, for all $x \in \Delta_1$, $g \in G$. The map $f: \Delta_1 \to \Delta_2$ is said to be a coarse embedding if there exist non-decreasing functions $\rho_-, \rho_+: [0, \infty) \to [0, \infty)$ such that

\begin{enumerate}
\item $\displaystyle\lim_{t \to +\infty}\rho_-(t)= +\infty$,

\item $\rho_-(d(x, y)) \leq d(f(x), f(y)) \leq \rho_+(d (x, y))$, for all $x, y \in \Delta_1$.
\end{enumerate}

Given an equivariant coarse embedding, we will define an isometry between admissible Hilbert modules covering the map. Let
 $(C_0(\Delta_1), \pi_1, G)$ and $(C_0(\Delta_2), \pi_2, G)$ be admissible systems on $H_1=H_{\Delta_1} \otimes E_1 \otimes B $ and $H_2=H_{\Delta_2} \otimes E_2 \otimes B $ as in Definition \ref{DefnAdmissible}. For any adjointable operator $T: H_1 \to H_2$, the support, denoted by $\text{supp}(T)$, is defined to be the complement (in $\Delta_2 \times \Delta_1$) of all the pairs $(x,y) \in \Delta_2 \times \Delta_1$ for which there exist $f \in C_0(\Delta_1)$ and $g \in C_0(\Delta_2)$ with $f(y)\neq 0$ and $g(x) \neq 0$ such that
$\pi_2(g)T \pi_1(f)=0.$

A space $\Delta_1$ is said to be $G$-equivariantly coarsely equivalent to $\Delta_2$, if there exist $G$-equivariant coarse embedding $f: \Delta_1 \to \Delta_2$ and $g: \Delta_2 \to \Delta_1$, such that $d(fg(y), y)<c$ for all $y \in \Delta_2$ and $d(gf(x), x)<c$ for all $x \in \Delta_1$, where $c$ is a positive constant. Let us recall the result that the $K$-theory of Roe algebras with coefficients in any $G$-$C^*$-algebra $B$ is invariant under equivariant coarse equivalence. For completeness, we also present the proof.

\begin{prop}[\cite{WillettYuBook}]\label{Prop_Roe_coarse_invar}
Let $\Delta_1$ and $\Delta_2$ be metric spaces with proper $G$-actions. If $\Delta_1$ is $G$-equivariantly coarsely equivalent to $\Delta_2$, then $K_*(C^*(\Delta_1, G, B))$ is isomorphic to $K_*(C^*(\Delta_2, G, B))$, for any $G$-$C^*$-algebra $B$.
\end{prop}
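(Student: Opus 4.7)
The plan is standard: from each $G$-equivariant coarse embedding $f:\Delta_1\to\Delta_2$ I would produce a $*$-homomorphism $\phi_f: C^*(\Delta_1,G,B)\to C^*(\Delta_2,G,B)$, and show that the coarse inverse induces the $K$-theoretic inverse. The $*$-homomorphism will be implemented by conjugation by a $G$-equivariant isometry $V_f:H_1\to H_2$ of admissible Hilbert $B$-modules whose support is contained in a uniform neighbourhood of the graph of $f$.

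First I would construct $V_f$. Fix a countable $G$-invariant dense subset $X_1\subset\Delta_1$. Because the action is proper, $X_1$ can be partitioned into countably many $G$-invariant Borel subsets $U_\alpha$ on which the stabilizer has a single conjugacy type, represented by a finite subgroup $F_\alpha\leq G$. Condition (3) of Definition \ref{DefnAdmissible}, applied on each side, provides compatible $\ell^2(F_\alpha)$-decompositions of $E_1$ and $E_2$; stabilising if necessary, one can identify the restriction of $H_1$ to $U_\alpha$ with a submodule of the restriction of $H_2$ to a controlled neighbourhood of $f(U_\alpha)$, $G$-equivariantly. Summing these partial isometries across $\alpha$ yields $V_f$ with support contained in $\{(y,x)\in\Delta_2\times\Delta_1:d(y,f(x))\leq C\}$ for some constant $C$.

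Given $V_f$, if $T\in C^*_{alg}(\Delta_1,G,B)$ has propagation $R$, then $V_fTV_f^*$ is $G$-invariant, locally compact, and has propagation at most $\rho_+(R)+2C$. Hence $\operatorname{Ad}(V_f)$ extends to a $*$-homomorphism $\phi_f:C^*(\Delta_1,G,B)\to C^*(\Delta_2,G,B)$, and the coarse inverse $g:\Delta_2\to\Delta_1$ yields $\phi_g$ in the same way. The composition $\phi_g\circ\phi_f=\operatorname{Ad}(V_gV_f)$ is conjugation by a $G$-equivariant isometry $W:H_1\to H_1$ whose support lies in a bounded neighbourhood of the diagonal, because $g\circ f$ is at bounded distance from $\operatorname{id}_{\Delta_1}$. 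A standard $2\times 2$-matrix rotation homotopy, performed inside the (stabilised) Roe algebra, shows that conjugation by any two $G$-equivariant isometries with diagonally supported propagation induces the same map on $K_*$; comparing $W$ with the identity gives $(\phi_g)_*\circ(\phi_f)_* = \operatorname{id}$, and symmetrically on the other side.

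The main obstacle will be the construction of the $G$-equivariant covering isometry in step one, because the proper $G$-action on $\Delta_i$ need not be free. The precise form of admissibility, in particular the $\ell^2(F)$-factor in Definition \ref{DefnAdmissible}(3), exists specifically to permit equivariant partial isometries between orbit-type strata of different spaces; once the bookkeeping across the stratification is handled, the rest is absorption and a routine $K$-theoretic homotopy of isometries.
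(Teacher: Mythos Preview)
Your overall strategy---build a $G$-equivariant covering isometry $V_f$ with support in a controlled neighbourhood of the graph of $f$, push forward by $\operatorname{Ad}(V_f)$, and then use a rotation homotopy to show the compositions act as the identity on $K$-theory---is exactly the one the paper uses. Your treatment of the last step (the $2\times 2$ rotation) is in fact more explicit than the paper's, which simply asserts that the reverse coarse map ``gives rise to an inverse map.''

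The gap is in your construction of $V_f$. You partition the source $X_1\subset\Delta_1$ into $G$-invariant strata $U_\alpha$ by orbit type and then invoke Definition~\ref{DefnAdmissible}(3) on each stratum. But an orbit-type stratum can be enormous: if the $G$-action is free, there is a single stratum $U_\alpha=X_1$. Knowing that $E_1$ and $E_2$ each carry an $\ell^2(F_\alpha)$-factor tells you nothing about where points of $U_\alpha$ should land in $\Delta_2$, and there is no mechanism in your sketch that forces the resulting isometry to have support within $\{(y,x):d(y,f(x))\le C\}$. The admissibility condition (3) controls the finite-group representation theory on the $E$-factor; it does not by itself produce the spatial matching you need.

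The paper fixes this by working on the \emph{target} rather than the source. Using the slice theorem for proper actions (Lemma~A.2.8 of \cite{WilYu}), one partitions $\Delta_2$ into disjoint Borel pieces $U_i=K_i\times_{G_i}G$ where each $K_i$ is $G_i$-invariant, has non-empty interior, and has \emph{uniformly bounded diameter}. Pulling back gives a disjoint cover $\{f^{-1}(U_i)\}$ of $\Delta_1$; on each piece one builds a $G_i$-equivariant isometry $V_i:\chi_{f^{-1}(K_i)}H_1\to\chi_{K_i}H_2$ using admissibility and the non-empty interior of $K_i$, extends $G$-equivariantly over the orbit, and sums. The uniform bound on $\operatorname{diam}(K_i)$ is precisely what gives the support control on $V=\bigoplus_i V_i$. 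Once you replace your orbit-type partition by this bounded-slice partition of the target, the rest of your argument goes through.
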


 \begin{proof}
 Since the $G$-action  on $\Delta_2$ is proper and isometric, by Lemma A.2.8 in \cite{WilYu},
  one can find a Borel cover $\{U_i\}$ with mutually disjoint elements, such that
\begin{enumerate}
\item $U_i=K_i \times_{G_{i}} G$, where  $G_i\leq G$ is a finite subgroup, and $K_i\subset \Delta_1$ is $G_i$-invariant for all $i$,
\item $K_{i}$ has non-empty interior for all $i$,
\item the diameter of $K_i$ is uniformly bounded for all $i$.
\end{enumerate}
One obtains a cover $\{f^{-1}(U_i)\}$ of $\Delta_1$. The representation of $C_0(\Delta_1)$ on $H_1$ extends to a representation of the algebra of all bounded Borel functions on $\Delta_1$. Thus, for each $i$, we have $\chi_{U_i}H_2\cong \chi_{U_i}H_{\Delta_2} \otimes \ell^2(G_i) \otimes H_{U_i} \otimes B$ by Definition \ref{DefnAdmissible}. Since $K_i$ has non-empty interior and is $G_i$-invariant, we can define a $G_i$-equivariant isometry $V_i: \chi_{f^{-1}(K_i)}H_1 \to \chi_{K_i}H_2$. Hence, we obtain a $G$-equivariant isometry $V_i: \chi_{f^{-1}(U_i)}H_1 \to \chi_{U_i}H_2 $, which, in return, gives us an isometry $V=\bigoplus_{i} V_i:H_1 \to H_2$. It follows from condition (3) above that the operator $VTV^*$ has finite propagation when $T$ has finite propagation. Therefore, the map
$$
\mbox{Ad}(V): C^*(\Delta_1, G, B) \to C^*(\Delta_2, G, B)
$$
given by
$\mbox{Ad}(V)(T)=VTV^*$ is well-defined and induces a homomorphism on $K$-theory
$$\text{Ad}(V)_*:  K_*(C^*(\Delta_1, G, B)) \to K_*(C^*(\Delta_2, G, B)).$$
Similarly, the equivariant coarse map from $\Delta_2$ to $\Delta_1$ gives rise to an inverse map.

\end{proof}

\begin{rem}\label{rem_Roe_independent_on_admissible_system}
It is easy to check that when $f$ is the identity map, the isometry above is a unitary, thus, there is an isomorphism between Roe algebras defined on different admissible covariant systems. As a result, the definition of Roe algebras is independent of the choice of the admissible covariant system.
\end{rem}

The following result is essentially due to John Roe.
\begin{prop}[\cite{Kas_Yu}]
Let $G$ be a countable discrete group, and $\Delta$ a locally compact metric space with a proper cocompact $G$-action. If $\big(C_0(\Delta), G, \pi \big)$ is an admissible covariant system, then the Roe algebra $C^*(\Delta, G, B)$ is $*$-isomorphic to $(B \rtimes _{\operatorname{r}} G)\otimes \mathcal{K}$, where $\mathcal{K}$ is the algebra of all compact operators on some infinite-dimensional separable Hilbert space.
\end{prop}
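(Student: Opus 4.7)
The plan is to construct an explicit $*$-isomorphism by decomposing $H$ according to a fundamental domain and recognizing the Roe algebra as the regular representation of $(B \rtimes_{\operatorname{r}} G) \otimes \mathcal{K}$. By Remark \ref{rem_Roe_independent_on_admissible_system} I may use any convenient admissible covariant system.

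First, using properness and cocompactness I would produce a relatively compact Borel fundamental domain $\Delta_0 \subset \Delta$ with $\Delta = \bigsqcup_{g \in G} g \Delta_0$ (modulo finite isotropy, which is absorbed via item (3) of Definition \ref{DefnAdmissible}). The projections $P_g := \pi(\chi_{g\Delta_0})$ form a $G$-equivariant partition of unity on $H$ satisfying $U_g P_h U_g^{-1} = P_{gh}$. The ``fundamental domain fiber'' $V := P_e H$ is a countably generated Hilbert $B$-module, and the map
$$
\Phi : \ell^2(G) \otimes V \longrightarrow H, \qquad \delta_g \otimes \xi \longmapsto U_g \xi,
$$
is a Hilbert space unitary intertwining $U$ with the left-regular representation $\lambda \otimes 1_V$. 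Under $\Phi$ the right $B$-action on $H$ pulls back to the twisted action $\delta_g \otimes \xi \mapsto \delta_g \otimes \xi \cdot \alpha_{g^{-1}}(b)$, which is precisely the $B$-component of the regular covariant representation defining $B \rtimes_{\operatorname{r}} G$.

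Second, I would read off the image of the algebraic Roe algebra under $\Phi$ as a convolution algebra. $G$-invariance forces the matrix entries $\tilde{T}_{a,b}$ to depend only on the offset $k := a^{-1}b$; $B$-covariance of $T$ on $H$ forces each offset-entry $S_k$ to be $\alpha_k$-twisted $B$-linear, meaning $S_k(v \cdot b) = S_k(v) \cdot \alpha_k(b)$; finite propagation yields finitely supported families $\{S_k\}$; and local compactness makes each $S_k$ compact. By items (1) and (2) of Definition \ref{DefnAdmissible}, $V$ is isomorphic to the standard Hilbert $B$-module $\ell^2(\mathbb{N}, B)$, and under this identification the resulting twisted convolution algebra is the algebraic crossed product $B \rtimes_{\operatorname{alg}} G$ tensored with $\mathcal{K}$, acting on $\ell^2(G) \otimes V$ via the regular covariant representation. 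Taking operator-norm closure inside $B(H)$ therefore yields $(B \rtimes_{\operatorname{r}} G) \otimes \mathcal{K}$, since the representation afforded by $\Phi$ is the regular one.

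The main technical obstacle is the careful covariance bookkeeping that identifies the twisted matrix-convolution algebra with $B \rtimes_{\operatorname{alg}} G \otimes \mathcal{K}$ and shows that its closure in the operator norm on $H$ is precisely the reduced crossed product norm rather than the full or an intermediate norm. This rests on $\Phi$ presenting the $G$-action via the left-regular representation on $\ell^2(G)$, but verifying compatibility requires a careful unwinding of the interplay between $U$, the $B$-action on $H$, and the block decomposition by the $P_g$.
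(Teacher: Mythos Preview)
The paper does not supply its own proof of this proposition; it is stated as a known result essentially due to Roe, with a citation to \cite{Kas_Yu}, and the text moves directly to the next subsection. So there is no in-paper argument to compare against.

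Your sketch follows the standard Roe argument and is essentially correct. The one place requiring genuine care is the fundamental-domain step: for a proper but non-free action the decomposition $\Delta = \bigsqcup_{g \in G} g\Delta_0$ fails literally, since points with nontrivial (finite) isotropy are overcounted, and then the projections $P_g$ are not mutually orthogonal. Your parenthetical remark that item (3) of Definition \ref{DefnAdmissible} absorbs this is correct in spirit but hides real work. One typically closes the gap in one of two ways: either pass at the outset to the concrete admissible module $H = \ell^2(X) \otimes H_0 \otimes \ell^2(G) \otimes B$ constructed just before Definition \ref{DefnAdmissible} (where the $\ell^2(G)$ factor already carries the left-regular representation, so your unitary $\Phi$ is essentially the identity and the identification with the reduced crossed product is immediate), or use the Borel cover $\{K_i \times_{G_i} G\}$ from the proof of Proposition \ref{Prop_Roe_coarse_invar} and handle each finite isotropy group $G_i$ separately via the $\ell^2(G_i)$ tensor factor guaranteed by admissibility. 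Either route repairs the disjointness issue; the remainder of your bookkeeping (finite propagation $\leftrightarrow$ finitely supported convolution, local compactness $\leftrightarrow$ compact matrix coefficients, regular representation on $\ell^2(G)$ $\leftrightarrow$ reduced crossed-product norm) is accurate.
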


\subsection{The Baum--Connes assembly map}
Let $H$ be a $G$-Hilbert module over $B$. Let $F$ be an operator in $B(H)$, and let $\pi: C_0(\Delta) \to B(H)$ be a $*$-representation of $C_0(\Delta)$, such that $F$ is $G$-invariant, $\pi(f)F -F\pi(f)$, $\pi(f)(FF^*-1)$ and $(F^*F-1)\pi(f)$ are in $K(H)$ for all $f \in C_0(\Delta)$. The triple $(H, \pi, G)$ is called a $KK$-cycle.

The group $KK_0^G(\Delta, B)$ is an abelian group consisting of the homotopy equivalence classes of $KK$-cycles.
By Proposition 5.5 in \cite{KasSkan}, any class in $KK_0^G(\Delta, B)$ can be represented by a $KK$-cycle $(H, \pi, F)$ such that the covariant system $(C_0(\Delta), G, \pi)$ is admissible, where $H$ is $G$-Hilbert module over $B$, $F$ is an operator in $B(H)$, such that $F$ is $G$-invariant and $\pi(f)F-F\pi(f)$, $\pi(f)(FF^*-1)$, and $\pi(f)(F^*F-1)$ are in $K(H)$ for all $f \in C_0(\Delta)$.

For any fixed $\epsilon>0$, let $\{U_i\}_{i \in I}$ be a locally finite and $G$-equivariant open cover of $\Delta$ such that $\text{diameter}(U_i) < \epsilon$ for all $i \in I$. An open cover is said to be $G$-equivariant if $g(U_i)\in \{U_i\}_{i \in I}$, for all $g \in G$, $i \in I$. Let $\left\{\phi_i\right\}_{i \in I}$ be a $G$-equivariant partition of unity subordinate to the open cover $\{U_i\}_{i \in I}$. A partition of unity $\left\{\phi_i\right\}_{i \in I}$ is said to be $G$-equivariant if $g\cdot \phi_i \in \left\{\phi_i\right\}_{i \in I}$, for all $g \in G$, $i \in I$.

Define an operator on $H$ by
$$
F_{\epsilon}:=\sum_{i \in I} \pi(\sqrt{\phi_i})F\pi(\sqrt{\phi_i}),
$$
where the sum converges in the strong operator topology.

Note that the propagation of $F_{\epsilon}$ is smaller than $\epsilon$, and $(H, \pi, F_{\epsilon})$ is equivalent to $(H, \pi, F)$ in $KK_0^G(\Delta, B)$, for any $\epsilon>0$. By the definition of $F_{\epsilon}$, $F_{\epsilon}$ is a multiplier of $C^*(\Delta, G, B)$, and it is invertible modulo $C^*(\Delta, G, B)$. Let $M(C^*(\Delta, G, B))$ be the multiplier algebra of $C^*(\Delta, G, B)$. Then we have the boundary map in K-theory
$$
\partial:K_1(M((C^*(\Delta, G, B))/C^*(\Delta, G, B)) \to K_0(C^*(\Delta, G, B)).
$$
We define the Baum--Connes assembly map for $G$
$$
\mu: KK_0^G(\Delta, B) \to K_0(C^*(\Delta, G, B)) \cong K_0(B \rtimes_{\operatorname{r}}G)
$$
by
$$
\mu([(H, \pi, F)])=\partial([F_{\epsilon}]).
$$
It is not difficult to check that the map $\mu$ is well-defined.

Similarly, we can define the Baum--Connes assembly map
$$
\mu: KK_1^G(\Delta, B) \to K_1(C^*(\Delta, G, B)) \cong K_1(B \rtimes_{\operatorname{r}}G).
$$
This induces the Baum--Connes assembly map
$$
\mu: KK_*^G(\mathcal{E}G, B) \to  K_*(B \rtimes_{\operatorname{r}}G),
$$
where $KK_*^G(\mathcal{E}G, B)$ is defined to be the inductive limit of $KK_*^G(\Delta, B)$ over all $G$-invariant and cocompact subspaces $\Delta$ of $\mathcal{E}G$. Later, we will show that these invariant and cocompact subspaces can be chosen to be finite-dimensional simplicial complexes, since there exist simplicial models for the universal space $\mathcal{E}G$.

By Proposition 1.8 in \cite{BaCoHig}, one can choose a model for the universal space $\mathcal{E}G$ for proper $G$-action as follows.

\begin{defn}
For each $d>0$, we define the Rips complex, denoted by $P_d(G)$, to be the simplicial complex with vertex set $G$ and such that a finite subset $\{\gamma_i\}_{i=1}^n \subset G$ spans a simplex if and only if $d(\gamma_i, \gamma_j)\leq d$.
\end{defn}

For each $d>0$, the Rips complex $P_d(G)$ is endowed with a spherical metric as follows. For the simplex $\left\{\sum_{i=0}^nc_i \gamma_i: c_i \in [0,1],~\text{with}~\sum_{i=1}^n c_i=1\right\}$ spanned by the finite subset $\{\gamma_i\}_{i=1}^n \subset G$, it is identified with the part of the $(n-1)$-sphere in the positive orthant via the map
$$
\sum_{i=0}^nc_i \gamma_i \mapsto \left(c_0/\left(\sum_{i=0}^nc_i^2\right) , c_1/\left(\sum_{i=0}^nc_i^2\right), \cdots, c_n/\left(\sum_{i=0}^nc_i^2\right)\right).
$$
The simplex spanned by $\{\gamma_i\}_{i=1}^n \subset G$ admits a metric induced by pullback of the standard Riemannian metric on the sphere. The spherical metric on the Rips complex $P_d(G)$ is the maximal metric such that it restricts to the above spherical metric on each simplex.

We can choose the union $\bigcup_{d>0} P_d(G)$ as a model of $\mathcal{E}G$, where the union is equipped with the weak topology under which a subset $C \subset \bigcup_{d>0} P_d(G)$ is closed if and only if $C\cap P_d(G)$ is closed for each $d>0$.

\subsection{Localization algebras and the local index map}
Let us now recall localization algebras and the local index map, and introduce some basic properties of the $K$-theory of localization algebras.

Let $\Delta$ be the topological realization of a locally compact and finite-dimensional simplicial complex endowed with the simplicial metric. Let $(C_0(\Delta), G, \pi)$ be an admissible covariant system, where $\pi: C_0(\Delta) \to B(H)$ is a $*$-homomorphism for some Hilbert module $H$ over $B$.

\begin{defn}\leavevmode
\begin{enumerate}
\item The algebraic localization algebra $C^*_{L, alg}(\Delta, G, B)$ is defined to be the algebra of all the bounded and uniformly continuous maps $f: [0, \infty) \to C^*_{alg}(\Delta, G, B)$ such that $\text{propagation}(f(t)) \to 0$ as $t \to \infty$.

\item The localization algebra $C^*_L(\Delta, G, B)$ is the norm closure of $C^*_{L, alg}(\Delta, G, B)$ under the norm
$$
\|f\|=\sup_{t\in [0, \infty)}\|f(t)\|.
$$
\end{enumerate}
\end{defn}
The localization algebra is an equivariant analogue of the algebra introduced by Yu in \cite{Yu_localization}.
Note that, up to $*$-isomorphism, the localization algebra $C_L^*(\Delta, G, B)$ is independent of the choice of the admissible covariant system by Remark \ref{rem_Roe_independent_on_admissible_system}.

A Borel coarse map $f: \Delta_1 \to \Delta_2$ is said to be Lipschitz, if there exists a constant $c>0$, such that $d(f(x), f(y)) \leq c d(x, y)$ for all $x, y \in \Delta_1$. A Lipschitz coarse map $f:\Delta_1 \to \Delta_2 $ induces a homomorphism $\text{Ad}({V_f}): C_L^*(\Delta_1, G, B) \to C_L^*(\Delta_2, G, B)$ as follows.

Let $\{\epsilon_n\}_{n \in \mathbb{N}}$ be a sequence of positive numbers with $\lim_{n \to \infty} \epsilon_n=0$. By the same argument as Proposition \ref{Prop_Roe_coarse_invar}, for each $k$, there exists a $G$-equivariant isometry $V_k: H_1 \to H_2$ between the Hilbert $B$-module, such that
$$
\text{supp}(V_k) \subset \left\{(y, x) \in \Delta_2 \times \Delta_1: d(y, f(x)) \leq \epsilon_k\right\}.
$$

Define a family of isometries $\big(V_f(t)\big)_{t \in [0, \infty)}$ from $H_1$ to $H_2$ by
$$
V_f(t):=R(t-k)(V_k \oplus V_{k+1}) R^*(t-k),
$$
for $t \in [k, k+1)$, where
$$
R(t)=
\left(
\begin{matrix}
\cos(\pi t /2)& \sin(\pi t /2)\\
-\sin(\pi t/2)& \cos(\pi t /2)
\end{matrix}
\right).
$$
Then $V_f(t)$ induces a homomorphism on unitization
$$\text{Ad}(V_f(t)):\big(C_L^*(\Delta_1, G, B)\big)^+ \to \big(C_L^*(\Delta_2, G, B)\big)^+ \otimes M_2(\mathbb{C})$$
by
$$
\text{Ad}(V_f(t))(u(t)+cI)=V_f(t)(u(t) \oplus 0)V_f^*(t)+cI
$$
for all $u \in C_L^*(\Delta_1, G, B)$ and $c \in \mathbb{C}$.

The family of maps  $\mbox{Ad}(V_f(t))$ induces a homomorphism,
$$\mbox{Ad}(V_f(t))_*: K_*(C_L^*(\Delta_1, G, B))\to K_*(C_L^*(\Delta_2, G, B))$$
on the $K$-theory. Note that $\mbox{Ad}(V_f(t))(u(t)+cI)$ is uniformly continuous on $t$, even though $V_f(t)$ is not continuous. It is also easy to check the propagation condition of the path $\mbox{Ad}(V_f(t))(u(t)+cI)$.
By Lemma 3.4 of \cite{Yu_localization}, the homomorphism between the $K$-groups is independent of the choice of the family of isometries $\{ V_k \}_k$.

\begin{defn}
Let $\Delta_1$ and $\Delta_2$ be two proper metric spaces and $f$, $g$ two Lipschitz maps from $\Delta_1$ to $\Delta_2$. The map $f$ is said to be strongly Lipschitz homotopy equivalent to $g$ if there exists a continuous homotopy $F(t, x): \left[0, 1\right]\times \Delta_1 \to \Delta_2$, such that
\begin{enumerate}
\item $F(t, x)$ is a coarse map from $\Delta_1$ to $\Delta_2$ for each $t$,

\item there exists a constant $C>0$, such that $d(F(t, x), F(t, y))\leq Cd(x, y)$ for all $x, y \in \Delta_1$ and $t \in [0, 1]$,

\item for any $\epsilon>0$, there exists $\delta>0$, such that $d(F(t_1, x), F(t_2, x)) \leq \epsilon$ for all $x \in X$, and $|t_1-t_2|< \delta$,

\item $F(0, x)=f(x)$, $F(1, x)=g(x)$, for all $x \in X$.
\end{enumerate}
\end{defn}

\begin{defn}
The metric  space $\Delta_1$ is said to be strongly Lipschitz homotopy equivalent to $\Delta_2$ if there exist two Lipschitz coarse maps $f: \Delta_1 \to \Delta_2$ and $g: \Delta_2 \to \Delta_1$ such that $f\circ g$ and $g\circ f$ are respectively strongly Lipschitz homotopy equivalent to $id_{\Delta_2}$ and $id_{\Delta_1}$.
\end{defn}

The $K$-theory of localization algebras is invariant under strong Lipschitz homotopy equivalence. Following the proof of Proposition 3.7 in \cite{Yu_localization}, it is not difficult to prove the following equivariant analogue. For ease of reference later, we include the proof.
\begin{prop}[\cite{Yu_localization}]\label{thm_Lip_hom_inva}
Let $f: \Delta_1 \to \Delta_2$ be a strongly Lipschitz homotopy equivalence, then the map
$$\mbox{Ad}(V_f(t))_*: K_*(C_L^*(\Delta_1, G, B)) \to K_*(C_L^*(\Delta_2, G, B))$$
is an isomorphism.
\end{prop}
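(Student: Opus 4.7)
The plan is to reduce the proposition to two ingredients: (i) functoriality, $\text{Ad}(V_{g \circ f})_* = \text{Ad}(V_g)_* \circ \text{Ad}(V_f)_*$ for composable Lipschitz coarse maps, and (ii) homotopy invariance, that strongly Lipschitz homotopic maps induce the same homomorphism on $K_*$ of the localization algebras. Given (i) and (ii), if $g$ is a strong Lipschitz inverse of $f$, then $\text{Ad}(V_g)_* \circ \text{Ad}(V_f)_* = \text{Ad}(V_{g \circ f})_* = \text{Ad}(V_{\text{id}_{\Delta_1}})_* = \text{id}$, and symmetrically on the other side, so $\text{Ad}(V_f)_*$ is an isomorphism. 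For (i), one may pick an isometry covering $g \circ f$ as a composition of isometries covering $g$ and $f$; Lemma 3.4 of \cite{Yu_localization} ensures that the resulting $K$-theory map is independent of this choice.

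The substantive content lies in (ii). Fix a strong Lipschitz homotopy $F: [0,1] \times \Delta_1 \to \Delta_2$ between $f_0$ and $f_1$, with uniform Lipschitz constant $C$ by condition (2). For each $s \in [0,1]$ write $F_s := F(s, \cdot)$, and for each $k \in \mathbb{N}$ use the uniform continuity from condition (3) to choose a partition $0 = s_0^k < s_1^k < \cdots < s_{n_k}^k = 1$ and a tolerance $\delta_k \to 0$ so that $d(F_{s_j^k}(x), F_{s_{j+1}^k}(x)) < \delta_k$ for all $j$ and $x$. As in the proof of Proposition \ref{Prop_Roe_coarse_invar}, pick $G$-equivariant isometries $W_j^k: H_1 \to H_2$ whose supports are contained in the $\delta_k$-neighborhood of the graph of $F_{s_j^k}$. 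Consecutive isometries $W_j^k$ and $W_{j+1}^k$ are interpolated by the rotation $R$ used in the definition of $V_f(t)$, and the resulting piecewise path in $s$ is then spliced over $k$ in the variable $t$ by the same rotation trick, producing a two-parameter family of isometries $W(s, t)$. Conjugation by $W(s, t)$ yields a continuous path of $*$-homomorphisms $(C_L^*(\Delta_1, G, B))^+ \to (C_L^*(\Delta_2, G, B))^+ \otimes M_2(\mathbb{C})$ connecting $\text{Ad}(V_{f_0}(t))$ and $\text{Ad}(V_{f_1}(t))$, giving the desired equality on $K$-theory.

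The main obstacle is the propagation bookkeeping. The operator $W(s, t)(u(t) \oplus 0) W(s, t)^*$ must have propagation tending to $0$ as $t \to \infty$, uniformly in $s$, so that the path lies in the localization algebra. Under conjugation by an isometry supported in the $\delta_k$-neighborhood of the graph of a $C$-Lipschitz map, propagation is stretched by at most $2\delta_k + C \cdot \text{propagation}(u(t))$, so one must couple $\delta_k$ and the partition fineness to $t$: as $t$ grows and $\text{propagation}(u(t)) \to 0$, one simultaneously refines the partition. Here the uniformity of $C$ in $s$ (condition (2)) together with the uniform continuity in $s$ (condition (3)) are exactly what make a single partition work simultaneously for all $s$, which is precisely why the \emph{strong} Lipschitz homotopy hypothesis, rather than a weaker pointwise one, is essential.
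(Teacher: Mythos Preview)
Your overall strategy---reduce to functoriality plus homotopy invariance, and prove the latter by building a continuous one-parameter family of conjugation maps in the homotopy parameter $s$---is reasonable and can be made to work, but it is \emph{not} the route the paper takes, and the construction of the two-parameter family $W(s,t)$ is the delicate point you have not fully nailed down. The difficulty is that your $s$-partition $\{s_j^k\}$ depends on $k$, so the piecewise-in-$s$ interpolation you build for $t\in[k,k+1)$ need not match the one for $t\in[k+1,k+2)$ at the seam $t=k+1$; the ``splicing over $k$ by the same rotation trick'' is exactly where this has to be made precise (e.g.\ by taking nested refinements of the partitions and rotating in an extra $M_2$ factor), and as written it is a sketch rather than a proof. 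Your propagation estimate $2\delta_k + C\cdot\mathrm{propagation}(u(t))$ is correct and is indeed where conditions (2) and (3) of the strong Lipschitz homotopy enter.

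The paper avoids this bookkeeping entirely by an Eilenberg swindle. Rather than building a continuous homotopy in $s$, it chooses a doubly-indexed sequence $\{t_{i,j}\}$ sampling the homotopy finely enough, forms the infinite direct sums
\[
a=\bigoplus_{k\ge 0}\mathrm{Ad}(V_{F(\cdot,t_{k,j})})(u)\,v^{-1},\quad
b=\bigoplus_{k\ge 0}\mathrm{Ad}(V_{F(\cdot,t_{k+1,j})})(u)\,v^{-1},\quad
c=\bigoplus_{k\ge 1}\mathrm{Ad}(V_{F(\cdot,t_{k,j})})(u)\,v^{-1},
\]
and observes $[a]=[b]$ (consecutive isometries are close, so Lemma~3.4 of \cite{Yu_localization} applies termwise) and $[b]=[c]$ (shift), whence $\mathrm{Ad}(V_{F(\cdot,0)})(u)v^{-1}=bc^{-1}$ is trivial in $K_1$. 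This telescoping argument replaces your continuous $s$-family by a discrete one and pushes all the analysis into the single estimate that adjacent samples are close, which is exactly condition (3). The swindle is shorter and more robust; your approach, if completed, has the virtue of producing an explicit homotopy rather than an algebraic cancellation.
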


\begin{rem}\label{rem_localization_infinite_plus_mod}
Let $(C_0(\Delta_1), \pi, G)$ be an admissible covariant system on a Hilbert module $H_1=H_{\Delta_1} \otimes E \otimes B$. Set $H'_1=\oplus_{k=1}^{\infty} H_{\Delta_1} \otimes E \otimes B$. Define a natural homomorphism $\eta: C_L^*(\Delta_1, H_1, G, B) \to C_L^*(\Delta_2, H'_1, G, B)$ via $\eta(b)=b \oplus 0$. It is easy to show that $\eta$ induces an isomorphism on $K$-theory of localization algebras defined on different admissible covariant systems.
\end{rem}

\begin{proof}
It suffices to show that the homomorphism $\text{Ad}(V_{gf}(t))_*$ is the identity map on $K$-theory. Let $F(x, t): \Delta_1 \times [0, 1] \to \Delta_2$ be the strong Lipschitz homotopy equivalence with $F(x, 0)=(gf)(x)$, $F(x, 1)=x$, for all $x \in X$. Fix a sequence of positive numbers $\{\epsilon_n\}_{n \in \mathbb{N}}$ with $\lim_{n \to \infty} \epsilon_n=0$ and a sequence of non-negative numbers $\{t_{i,j}\}_{i,j=0}^{\infty}$, satisfying

\begin{itemize}
\item $t_{0, j}=0$, $t_{i+1, j}\geq t_{i, j}$, for all $i, j \geq 0$,
\item for each $j$, $\exists$ $N_j$, such that $t_{i, j}=1$ for all $i\geq N_j$,
\item $d(F(x, t_{i, j}), F(x, t_{i+1, j}))\leq \epsilon_j$ , and $d(F(x, t_{i, j}), F(x, t_{i, j+1}))\leq \epsilon_j$, for all $x \in X$.
\end{itemize}
We shall prove that $\text{Ad}_*(V_{F(\cdot, t_{0, j})}(t))=\text{Id}$ for $K_1$-case. The $K_0$-case can be dealt with in a similar way by a suspension argument. Notice that $C_L(\Delta_1, G, B) \otimes M_n(\mathbb{C})\cong C_L^*(\Delta_1, G, B)$ for all $n$. Thus, every element in $K_*(C_L^*(\Delta_1, G, B))$ can be represented by an invertible element $u$ in $\big(C_L^*(\Delta_1, G, B)\big)^+$. Let $v=u \oplus I \in \big(C_L^*(\Delta_1, G, B)\big)^+\otimes M_2(\mathbb{C})$. Consider the following invertible elements in $\big(C_L^*(\Delta_1, G, B)\big)^+\otimes M_2(\mathbb{C})$.
\begin{align*}
a &=\oplus_{k\geq 0}Ad_*(V_{F(\cdot, t_{k, i})(t)})(u)v^{-1},\\
b &=\oplus_{k\geq 0}Ad_*(V_{F(\cdot, t_{k+1, i})(t)})(u)v^{-1},\\
c &=\oplus_{k\geq 1}Ad_*(V_{F(\cdot, t_{k, i})(t)})(u)v^{-1}.
\end{align*}
By the definitions of operators $\text{Ad}_{F(\cdot, t_{i, j})}$ and the sequence $\left\{t_{i, j}\right\}$, it is not difficult to check that $a$, $b$, $c$ are indeed elements in $\big(C^*_L(\Delta_2, G, B)\otimes M_2(\mathbb{C})\big)^+$. Obviously, we have that $a$ is equivalent to $b$, and $b$ is equivalent to $c$ in $K_1(C_L(\Delta_2, G, B))$. It follows that $\text{Ad}(V_{F(\cdot, t_{0, i})}(t))(u)v^{-1}=a\oplus_{k \geq 1} I=bc^{-1}$ which is equivalent to $\oplus_{k \geq 0}I$ in $K_1(C_L^*(\Delta_2, G, B))$. By Remark \ref{rem_localization_infinite_plus_mod}, we have that $\text{Ad}(V_{F(\cdot, t_{0, i})}(t))(u)$ is equivalent to $v$ in $K_1(C_L(\Delta_2, G, B))$.
\end{proof}


The following Mayer--Vietoris sequence is an equivariant analogue of the Mayer--Vietoris sequence introduced by Yu, and more details can be found in \cite{HRY}.

\begin{prop}[\cite{HRY}]\label{prop_MV squence_Local}
Let $\Delta$ be a simplicial complex endowed with the spherical metric, and let $G$ be a countable discrete group. Assume $G$ acts on $\Delta$ properly by isometries. Let $X_1, X_2\subset \Delta$ be $G$-invariant simplicial subcomplexes endowed with subspace metric. Then we have the following six-term exact sequence:
\begin{equation*}
\begin{tikzcd}
	K_0(L_{X_1\cap X_2, B})\arrow[r] & K_0(L_{X_1, B}) \oplus K_0(L_{X_2, B})  \arrow[r] & K_0(L_{X_1\cup X_2, B})\arrow[d]\\
	K_1(L_{X_1 \cup X_2, B})\arrow[u]\arrow[r] & K_1(L_{X_1, B}) \oplus K_1(L_{X_2, B})\arrow[r]& K_1(L_{X_1\cap X_2, B}),
\end{tikzcd}
\end{equation*}
where we set $L_{X_1, B}=C^{\ast}_L(X_1, G, B)$, $L_{X_2, B}=C^{\ast}_L(X_2, G, B)$,
$L_{X_1 \cup X_2, B}=C^{\ast}_L(X_1\cup X_2, G, B)$, and $L_{X_1 \cap X_2, B}=C^{\ast}_L(X_1\cap X_2, G, B)$ for brevity.
\end{prop}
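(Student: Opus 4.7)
The plan is to realize the six-term sequence as the standard Mayer--Vietoris exact sequence in $K$-theory associated to a $C^*$-algebra that decomposes as a sum of two closed two-sided ideals with prescribed intersection. Write $L_Y := C_L^*(Y, G, B)$ for brevity. First, I would choose compatible admissible covariant systems: pick a countable $G$-invariant dense subset $Z \subseteq X_1 \cup X_2$ with $Z_i := Z \cap X_i$ dense in $X_i$ and $Z_1 \cap Z_2$ dense in $X_1 \cap X_2$, and form the admissible Hilbert $B$-module $H = \ell^2(Z) \otimes H_0 \otimes \ell^2(G) \otimes B$. The induced orthogonal decomposition of $H$ together with Remark \ref{rem_Roe_independent_on_admissible_system} lets us realize $L_{X_1}$, $L_{X_2}$, and $L_{X_1 \cap X_2}$ as $C^*$-subalgebras of $L_{X_1 \cup X_2}$ via extension by zero.

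Next I would verify two identities:
$$
L_{X_1} \cap L_{X_2} \;=\; L_{X_1 \cap X_2} \qquad \text{and} \qquad L_{X_1} + L_{X_2} \;=\; L_{X_1 \cup X_2}.
$$
The intersection identity is immediate from the support condition: any $f \in L_{X_1} \cap L_{X_2}$ satisfies $\mathrm{supp}\,f(t) \subseteq (X_1 \times X_1) \cap (X_2 \times X_2) = (X_1 \cap X_2) \times (X_1 \cap X_2)$. For the sum identity I would exploit the vanishing-propagation condition. Choose a $G$-equivariant partition of unity $\phi_1 + \phi_2 = 1$ adapted to the simplicial decomposition (such $\phi_i$ exist equivariantly because every simplex of $X_1 \cup X_2$ lies entirely in $X_1$ or entirely in $X_2$), and decompose
$$
f(t) \;=\; \phi_1^{1/2} f(t) \phi_1^{1/2} + \phi_2^{1/2} f(t) \phi_2^{1/2} + (\text{cross terms}),
$$
where the cross terms vanish once $\mathrm{propagation}(f(t))$ drops below $d(\mathrm{supp}\,\phi_1 \setminus X_2,\, \mathrm{supp}\,\phi_2 \setminus X_1) > 0$. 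To promote this ``splitting modulo a tail'' to a genuine equality at the $C^*$-algebra level, I would introduce the \emph{relative} ideals $J_i \subseteq L_{X_1 \cup X_2}$ given by the norm closure of operators whose support eventually lies in an arbitrarily small neighborhood of $X_i \times X_i$; these are honest two-sided ideals (multiplication by an element of small propagation only enlarges support by the propagation, which is absorbed in the limit), and each $J_i$ is $K$-theoretically isomorphic to $L_{X_i}$ by a strong Lipschitz homotopy equivalence argument of the type used in Proposition \ref{thm_Lip_hom_inva}. With $J_1 + J_2 = L_{X_1 \cup X_2}$ and $J_1 \cap J_2$ $K$-equivalent to $L_{X_1 \cap X_2}$ in hand, the general six-term Mayer--Vietoris sequence for $A = I_1 + I_2$ with $I_1, I_2$ closed two-sided ideals of a $C^*$-algebra $A$ delivers the claim.

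The main obstacle is the sum identity, or equivalently the identification of the relative ideals $J_i$ with $L_{X_i}$ in $K$-theory. The partition-of-unity decomposition by itself only yields an approximation, and upgrading it requires either an $\varepsilon$-neighborhood excision argument or an inductive argument over the cellular structure, and the $G$-equivariance must be preserved throughout, which is where properness and the isometric action on the simplicial complex enter. Everything else is a straightforward $B$-coefficient, $G$-equivariant transplant of Yu's original localization-algebra Mayer--Vietoris argument from \cite{Yu_localization}.
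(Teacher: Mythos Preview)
The paper does not supply its own proof of this proposition; it is stated with a citation to \cite{HRY} and the remark that it is an equivariant analogue of Yu's Mayer--Vietoris sequence from \cite{Yu_localization}. Your outline reproduces precisely the strategy of those references: one passes to the ``relative'' ideals $J_i$ (operators whose support is eventually contained in an arbitrarily small neighborhood of $X_i\times X_i$), checks that these are genuine closed two-sided ideals summing to $L_{X_1\cup X_2}$, identifies each $J_i$ with $L_{X_i}$ and $J_1\cap J_2$ with $L_{X_1\cap X_2}$ in $K$-theory via a strong Lipschitz retraction of the neighborhood onto the subcomplex, and then invokes the algebraic Mayer--Vietoris sequence for a sum of ideals. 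Your identification of the sum identity as the crux, and of the relative ideals as the correct remedy, is exactly right; the subalgebras $L_{X_i}$ themselves are \emph{not} ideals in $L_{X_1\cup X_2}$ (multiplication by a small-propagation operator can push support just outside $X_i$), so the passage to the $J_i$ is not optional. One small correction: your claimed direct equality $L_{X_1}\cap L_{X_2}=L_{X_1\cap X_2}$ is fine at the algebraic level but is not what feeds into the six-term sequence---what you actually need, and what the homotopy argument gives, is that $J_1\cap J_2$ has the same $K$-theory as $L_{X_1\cap X_2}$.
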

\begin{rem}
It is easy to verify that the above exact sequence is natural with regard to the coefficient algebra $B$ in the following sense. If $\varphi: B \to B'$ is a $G$-equivariant $*$-homomorphism between $G$-$C^*$-algebras $B$ and $B'$, then it induces $*$-homomorphism between the Roe algebras $\varphi: C^*(Y, G, B) \to C^*(Y, G, B')$ by
$$\varphi\left((T_{y, z})_{y, z \in Y}\right)=(\varphi(T_{y, z}))_{y, z \in Y},$$
for each $(T_{y, z})_{y, z \in Y} \in C^*(Y, G, B)$, where $Y \subset \Delta$ be any countable $G$-invariant set. Obviously, this map induces a homomorphism on the localization algebras $\varphi_L: C_L^*(Y, G, B) \to C_L^*(Y, G, B')$. As a result, we obtain a map
$$\varphi_{L, *}: K_*(C_L^*(Y, G, B)) \to K_*(C_L^*(Y, G, B'))$$
 induced by $\varphi_L$ on $K$-theory.

The exact sequence in Proposition \ref{prop_MV squence_Local} is natural with respect to coefficient algebras in the sense that the diagram
	\begin{equation*}
\xymatrix{
	\cdots\ar[r]&K_0(L_{X_1\cap X_2, B}) \ar[r]\ar[d]^{\varphi_{L, *}}& K_0(L_{X_1, B}) \oplus K_0(L_{X_2, B})\ar[r]\ar[d]^{\varphi_{L, *}}&K_0(L_{X_1\cup X_2, B})\arrow[d]\ar[d]^{\varphi_{L,*}}\ar[r]&\cdots\\
	\cdots\ar[r]&K_0(L_{X_1\cap X_2, B'})\ar[r]&K_0(L_{X_1, B'}) \oplus K_0(L_{X_2, B'})\ar[r]&K_0(L_{X_1\cup X_2, B.})\ar[r]&\cdots.
	}
	\end{equation*}
commutes, where we set $L_{X_1, B}=C^{\ast}_L(X_1, G, B)$, $L_{X_2, B}=C^{\ast}_L(X_2, G, B)$,
$L_{X_1 \cup X_2, B}=C^{\ast}_L(X_1\cup X_2, G, B)$, $L_{X_1 \cap X_2, B}=C^{\ast}_L(X_1\cap X_2, G, B)$
$L_{X_1, B'}=C^{\ast}_L(X_1, G, B')$, $L_{X_2, B'}=C^{\ast}_L(X_2, G, B')$,
$L_{X_1 \cup X_2, B'}=C^{\ast}_L(X_1\cup X_2, G, B')$, and $L_{X_1 \cap X_2, B'}=C^{\ast}_L(X_1\cap X_2, G, B')$
 for brevity.

\end{rem}
Let us now define the local index map. For every positive integer $n$, let $\{U_{n, i}\}_{i\in I}$ be a locally finite and $G$-equivariant open cover for $\Delta$ with $\text{diameter}(U_{n, i}) \leq \frac{1}{n}$ for all $i$. Let $\left\{\phi_{n, i}\right\}_{i\in I}$ be the partition of unity subordinate to the open cover $\{U_{n, i}\}_i$. Let $[\Delta, \pi, F]\in KK_0^G(\Delta, B)$. Define an operator-valued function $F(t)$ on $[0, \infty)$ by
$$
F(t)=\sum_{i}(1-(t-n)) \pi(\sqrt{\phi_{n, i}}) F\pi(\sqrt{\phi_{n, i}})+(t-n) \pi(\sqrt{\phi_{n+1, i}}) F\pi(\sqrt{\phi_{n+1, i}})
$$
for all $t \in [n, n+1)$, where the sum converges in the strong operator topology. Note that
$$\text{propagation}(F(t))\to 0$$
as $t \to \infty$. We obtain a multiplier $(F(t))_{t \in [0, \infty)}$ of $C^*_L(\Delta, G, B)$, which is invertible modulo $C^*_L(\Delta, G, B)$. We define a local index map
$$
\text{ind}_L: KK_0^G( \Delta, B) \to K_0(C^*_L(\Delta, G, B)),
$$
by
$$
\text{ind}_L([H, \pi, F])=\partial([F(t)]),
$$
where $\partial: K_1(M(C^*_L(\Delta, G, B))/C^*_L(\Delta, G, B)) \to K_0(C^*_L(\Delta, G, B))$ is the boundary map on the $K$-theory, and $M(C^*_L(\Delta, G, B))$ is the multiplier algebra of $C^*_L(\Delta, G, B)$.
Similarly, we can define the local index map
$$
\text{ind}_L: KK_1^G( \Delta, B) \to K_1(C^*_L(\Delta, G, B)).
$$

The following result \footnote{In \cite{Qiao-Roe}, Roe and Qiao proved that the local index map is an isomorphism for proper metric spaces.} established the relation between the $K$-homology and the $K$-theory of localization algebras.
\begin{prop}[\cite{Kas_Yu}]\label{thm_local_index_isom}
Let $B$ be any $G$-$C^*$-algebra, and $\Delta$ a finite-dimensional simplicial complex endowed with a $G$-invariant metric.  Then the local index map
$$\text{ind}_L: KK_*^G( \Delta, B) \to K_*(C^*_L(\Delta, G, B)),$$
is an isomorphism.
\end{prop}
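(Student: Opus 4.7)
The plan is to prove this by induction on the dimension of $\Delta$, using the Mayer--Vietoris exact sequence of Proposition~\ref{prop_MV squence_Local} together with an analogous Mayer--Vietoris sequence on the $KK^G$ side, and the naturality of the local index map. The overall strategy parallels Yu's argument in the non-equivariant setting \cite{Yu_localization}, adapted to incorporate the group action and the $G$-$C^*$-algebra coefficients $B$. The first step is to establish that $\mathrm{ind}_L$ is natural with respect to inclusions $X \hookrightarrow Y$ of $G$-invariant simplicial subcomplexes: by extending admissible covariant systems from $X$ to $Y$ via the covering construction used in Proposition~\ref{Prop_Roe_coarse_invar}, the partition-of-unity formula defining $\mathrm{ind}_L$ commutes with inclusion, and a diagram chase shows that it intertwines the boundary maps of the two Mayer--Vietoris sequences.

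For the base case, a $0$-dimensional $G$-invariant subcomplex is a disjoint union of orbits $G/F_i$ for finite subgroups $F_i \leq G$; by additivity of both functors it suffices to treat a single orbit $G/F$. On the $K$-homology side, equivariant induction identifies $KK^G_*(G/F, B)$ with $KK^F_*(\mathrm{pt}, B) \cong K_*(B \rtimes F)$. On the other side, Roe's theorem gives $C^*(G/F, G, B) \cong (B \rtimes F) \otimes \mathcal{K}$, and for the uniformly discrete space $G/F$ every element of finite propagation is eventually block-diagonal, so the evaluation-at-$t$ maps $C^*_L(G/F, G, B) \to C^*(G/F, G, B)$ are all homotopic to a $K$-theory isomorphism with common image $K_*(B \rtimes F)$. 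A direct inspection of $\mathrm{ind}_L$ on a representative cycle shows it becomes the identity on $K_*(B \rtimes F)$ under these identifications.

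For the inductive step, suppose $\dim \Delta = n$ and the claim holds in dimension $< n$. Write $\Delta = X_1 \cup X_2$, where $X_1$ is a small $G$-equivariant open $\epsilon$-neighborhood of the union of barycenters of the $n$-simplices, and $X_2$ is the closed $G$-invariant complement of a still smaller such neighborhood. Then $X_1$ strongly Lipschitz retracts onto the $0$-dimensional orbit of barycenters, while $X_2$ and $X_1 \cap X_2$ are strongly Lipschitz homotopy equivalent to $G$-invariant subcomplexes of dimension at most $n-1$. Applying Proposition~\ref{thm_Lip_hom_inva} and the analogous homotopy invariance for $KK^G_*(-, B)$, the inductive hypothesis gives that $\mathrm{ind}_L$ is an isomorphism on $X_1$, $X_2$, and $X_1 \cap X_2$. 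The Mayer--Vietoris sequences of Proposition~\ref{prop_MV squence_Local} and its $K$-homology counterpart, combined with naturality of $\mathrm{ind}_L$ and the Five Lemma, then force $\mathrm{ind}_L$ to be an isomorphism on $\Delta$.

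The main obstacle is establishing the naturality of $\mathrm{ind}_L$ with respect to the boundary maps of the two Mayer--Vietoris sequences; this requires a careful bookkeeping of admissible covariant systems on the pieces $X_1, X_2, X_1 \cap X_2$ so that they all sit compatibly inside the one for $\Delta$, and a verification at the level of representing cycles that the partition-of-unity construction defining $\mathrm{ind}_L$ commutes with the connecting homomorphisms. A secondary technical point is the correct setup of the $0$-dimensional base case so that the orbit decomposition of a $G$-invariant vertex set is implemented by mutually orthogonal projections in the admissible Hilbert module, which is guaranteed by clause (3) of Definition~\ref{DefnAdmissible}.
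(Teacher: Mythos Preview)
Your proposal is correct and follows essentially the same approach as the paper, which simply records that the result is a consequence of the Mayer--Vietoris sequence and the Five Lemma, citing \cite{Yu_localization}. You have supplied the details the paper leaves implicit---the induction on dimension, the base case for orbits $G/F$, and the decomposition into lower-dimensional pieces---but the strategy is identical.
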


\begin{proof}
This result is a consequence of the Mayer--Vietoris sequence and the Five Lemma (see \cite{Yu_localization}).
\end{proof}

By choosing the model of $\mathcal{E}G$ as the union $\bigcup_{d>0}P_d(G)$, the group $KK_*^G(\mathcal{E}G, B)$ is the inductive limit
$$\displaystyle \lim_{d \to \infty} KK_*^G(P_d(G), G, B).$$
The above map induces a local index map
$$\text{ind}_L: KK_*^G(\mathcal{E}G, B) \to \lim_{d\to \infty} K_*(C^*_L(P_d(G), G, B)).$$
It is not difficult to show that the map $\text{ind}_L$ is an isomorphism by Proposition \ref{thm_local_index_isom}.

We will conclude this section by discussing the relation between the Baum--Connes assembly map and the local index map.

For each $d>0$, it is natural to define an evaluation-at-zero map
$$ev: C^*_L(P_d(G), G, B) \to C^*(P_d(G), G, B),$$
given by
$$ev(f)=f(0),$$
for all $f\in C^*_L(P_d(G), G, B)$.
The evaluation-at-zero map induces a homomorphism on $K$-theory
$$ev_*: K_*(C^*_L(P_d(G), G, B)) \to K_*(C^*(P_d(G), G, B))\cong K_*(B\rtimes_{\operatorname{r}}G).$$
Following the argument in \cite{Yu_localization}, it is easy  to check that $\mu=ev_* \circ \text{ind}_L$ holds. In Section 6, we will show that the map $ev_*$ is an isomorphism under some assumptions. Combining the above relation and Proposition \ref{thm_local_index_isom}, it follows that the Baum--Connes assembly map is also an isomorphism under the same assumption.

\section{$C^{\ast}$-algebras associated to infinite dimensional Hilbert spaces}

In this section, we will recall the $C^*$-algebra associated with an infinite-dimensional Hilbert space (defined in \cite{KaHiTr}) and introduce a generalization of this $C^*$-algebra associated with a continuous field of Hilbert spaces due to Tu (see \cite{Tu_BC}).

\subsection{The $C^*$-algebra associated with an infinite-dimensional Euclidean space}\footnote{ We would like to point out that there are other constructions of the $C^*$-algebra $\mathcal{A}(E)$ by Kasparov--Yu in \cite{Kas_Yu} and Gong--Wu--Yu in \cite{GongWuYu}.}
Let $E$ be a separable, infinite-dimensional Euclidean space. Let $E_a$, $E_b$ be any finite-dimensional, affine subspaces of $E$. Let $E_a^0$ be the finite-dimensional linear subspace of $E$
consisting of differences of elements in $E_a$. Let $\mathcal{C}(E_a)$ be the $\mathbb{Z}_2$-graded
$C^*$-algebra of continuous functions from $E_a$ to the complexified Clifford algebra of $E^0_a$ which vanish at infinity. Let $\mathcal{S}$ be the $\mathbb{Z}_2$-graded $C^*$-algebra of all continuous functions on $\mathbb{R}$ vanishing at infinity, where $\mathcal{S}$ is graded according to odd and even functions. Let $\mathcal{A}(E_a)$ be the graded tensor product $\mathcal{S} \widehat{\otimes}\mathcal{C}(E_a)$, where the $\mathbb{Z}_2$-grading on $\mathcal{C}(E_a)$ is induced from $\text{Cliff}(E_a^0)$.

Assume $E_a \subset E_b$. There exists a decomposition $E_b=E_{ba} \oplus E_a$, where $E_{ba}$ is the orthogonal complement of $E_a$ in $E_b$. For each element $v_b \in E_b$, there exists a unique decomposition $v_b=v_{ba}+v_a$, for some $v_{ba} \in E_{ba}$, $v_a \in E_a$.

For each function $h \in \mathcal{C}(E_a)$, we can extend it to a function on $E_b$ via $\tilde{h}(v_b)=h(v_a)$, for all $v_b=v_{ba}+v_a$. The decomposition $E_b=E_{ba} \oplus E_a$ gives rise to a Clifford algebra valued function, denoted by $C_{ba}: E_b \to\mbox{ Cliff}(E^0_b)$ on $E_b$ which maps $v_b \in E_b$ to $v_{ba} \in E_{ba} \subset \mbox{Cliff}(E_b^0)$.

Denote by $X: \mathcal{S} \to \mathcal{S}$ the function of multiplication by $ x$ on $\mathbb{R}$, viewed as a degree one, essentially selfadjoint, unbounded multiplier of $\mathcal{S}$ with domain the compactly supported functions in $\mathcal{S}$.

 \begin{defn}[\cite{GHT}]\leavevmode
\begin{enumerate}
\item Let $E_a \subset E_b$ be a pair of finite-dimensional affine subspaces of $E$. One can define a homomorphism
$$
\beta_{ba}: \mathcal{A}(E_a) \to \mathcal{A}(E_b)
$$
by $\beta_{ba}(f \widehat{\otimes}h)=f(X\widehat{\otimes }1+ 1\widehat{\otimes}C_{ba}) (1 \widehat{\otimes} \tilde{h})$, for all $f \in \mathcal{S}$, $h \in \mathcal{C}(E_a)$.
\item We define a $C^*$-algebra
$$\mathcal{A}(E):=\varinjlim \mathcal{A}(E_a),$$
where the direct limit is  over all finite-dimensional affine subspaces.
 \end{enumerate}
\end{defn}

\begin{rem}
If $E_a \subset E_b \subset E_c$, then we have $\beta_{cb}\circ \beta_{ba}=\beta_{ca} $, therefore the direct limit is well-defined.
\end{rem}

Given any discrete group $\Gamma$, if $\Gamma$ acts on the Euclidean space $E$ by linear isometries, then the $\Gamma$-action on $E$ induces a $\Gamma$-action on the $C^*$-algebra $\mathcal{A}(E)$.
Note that $\mathcal{A}(\{0\}) = \mathcal{S}$. For each $f \in \mathcal{S}$, let $\beta_t(f)=f_t(X\widehat{\otimes}1 + 1 \widehat{\otimes}C)$ for every $t \in [1, \infty)$, where $f_t(x)=f(x/t)$.

We define the Bott map
$$\beta_*:K_*(\mathcal{S}\rtimes_{max} \Gamma) \to K_*(\mathcal{A}(E)\rtimes_{max} \Gamma).
$$
to be the homomorphism induced by the asymptotic morphism
$$
\beta_t:\mathcal{S}\rtimes_{max} \Gamma \to \mathcal{A}(E)\rtimes_{max} \Gamma,$$
given by $f \mapsto \beta_t(f) $, for each $t \in [1, \infty)$.
The following result is due to Higson, Kasparov and Trout \cite{GHT}.

\begin{thm}[Infinite-dimensional Bott Periodicity \cite{KaHiTr}]
Let $\Gamma$ be a countable discrete group, $E$ an infinite-dimensional Euclidean space with a $\Gamma$-action by linear isometries. Then the Bott map
$$\beta_*:K_*(\mathcal{S}\rtimes_{\operatorname{max}} \Gamma) \to K_*(\mathcal{A}(E)\rtimes_{\operatorname{max}} \Gamma)$$
is an isomorphism.
\end{thm}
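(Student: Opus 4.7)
The plan is to use the Dirac-dual-Dirac method of Higson-Kasparov-Trout, constructing an inverse map $\alpha_*: K_*(\mathcal{A}(E) \rtimes_{\operatorname{max}} \Gamma) \to K_*(\mathcal{S} \rtimes_{\operatorname{max}} \Gamma)$ to $\beta_*$ and verifying that both composites are the identity. For each finite-dimensional linear subspace $V \subset E$, I would introduce the Bott-Dirac operator $B_V = D_V + C_V$ acting on $L^2(V) \widehat{\otimes} \operatorname{Cliff}(V)$, where $D_V$ is the Dirac operator on $V$ and $C_V$ is Clifford multiplication by the position vector. This operator is essentially self-adjoint of degree one, and its square is a harmonic oscillator shifted by the number operator, with discrete spectrum and one-dimensional kernel spanned by a Gaussian in the even Clifford degree. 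Its functional calculus produces a $\Gamma_V$-equivariant $*$-homomorphism $\alpha_V: \mathcal{A}(V) \to \mathcal{K}(L^2(V) \widehat{\otimes} \operatorname{Cliff}(V))$ sending $f \widehat{\otimes} h$ to $f(B_V)\pi(h)$. The crucial point is that these maps are asymptotically compatible as $V$ grows through finite-dimensional linear subspaces of $E$, so after an appropriate stabilization the inductive limit yields an equivariant asymptotic morphism $\alpha: \mathcal{A}(E) \rightsquigarrow \mathcal{K}$, which descends to the maximal crossed product and induces $\alpha_*$ on $K$-theory.

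The first identity $\alpha_* \circ \beta_* = \operatorname{id}$ is the more tractable direction. Unwinding the definitions, the composite $\alpha \circ \beta$ applied to $f \in \mathcal{S}$ reduces, for each finite-dimensional $V$, to $f$ evaluated on a rescaled Bott-Dirac-type operator; the spectral properties of $B_V^2$ together with the one-dimensional ground-state projection yield exactly the Bott class, and the naturality under the inclusions $V \subset W$ allows these finite-dimensional isomorphisms to glue together in the inductive limit. Equivariance under $\Gamma$ is automatic because both $D_V$ and $C_V$ are defined intrinsically from the Euclidean structure, so the constructions commute with isometries.

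The main obstacle, and the heart of the argument, is the reverse identity $\beta_* \circ \alpha_* = \operatorname{id}$, which requires the rotation trick. Here I would work on the doubled space $E \oplus E$ with the diagonal $\Gamma$-action, and consider the one-parameter family of rotations
$$R_\theta = \begin{pmatrix} \cos\theta & -\sin\theta \\ \sin\theta & \cos\theta \end{pmatrix}, \qquad \theta \in [0, \pi/2],$$
each of which commutes with the diagonal $\Gamma$-action and induces a $\Gamma$-equivariant automorphism of $\mathcal{A}(E \oplus E) \cong \mathcal{A}(E) \widehat{\otimes} \mathcal{A}(E)$. Transporting the asymptotic morphism $\beta \widehat{\otimes} \alpha$ through this family of rotations produces a continuous homotopy of equivariant asymptotic morphisms connecting $\beta \widehat{\otimes} \alpha$ at $\theta = 0$, which realizes $\beta \circ \alpha$ after collapsing one factor, to its conjugate by the swap at $\theta = \pi/2$, which realizes $\alpha \circ \beta$ and is therefore the identity by the previous step. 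The use of $\rtimes_{\operatorname{max}}$ throughout is essential, since the maximal crossed product is functorial for arbitrary equivariant asymptotic morphisms and no amenability hypothesis on $\Gamma$ is needed to descend the homotopy to $K$-theory. Combining the two identities gives the desired isomorphism.
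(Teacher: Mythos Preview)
The paper does not give its own proof of this theorem; it is stated as a result due to Higson, Kasparov and Trout and cited from \cite{KaHiTr} without argument. Your proposal correctly outlines the original Higson--Kasparov--Trout approach (Bott--Dirac operator, the inverse asymptotic morphism $\alpha$, and the rotation homotopy on $E \oplus E$), which is precisely the machinery the paper later sketches when generalizing to the continuous-field setting in Section~3.2.
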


\subsection{Generalization to the field cases} In the rest of this section, we will generalize the construction of Higson--Kasparov--Trout to the case of the continuous field. The following construction is essentially due to Tu \cite{Tu_BC}. The usage of the probability space is essentially due to Higson \cite{Hig_bivariant}.

Let $\Gamma$ be a countable, discrete group with identity element $e \in \Gamma$ , and let $X$ be a compact Hausdorff space admitting a $\Gamma$-action by homeomorphisms. Let us recall the definition of the transformation groupoid, denoted by $X \rtimes \Gamma$, associated with a group action $\Gamma \curvearrowright X$.

\begin{defn} As a topological space, $X \rtimes \Gamma=\{(x,g): x \in X, g \in \Gamma\}$ is equipped with the product topology. In addition, the topological space is endowed with the following structure.

\begin{enumerate}
\item The product is given by $(x,g)(x',g')=(x,gg')$, for all $(x,g), (x',g') \in X \rtimes \Gamma$ satisfying $x'=xg$.
\item The inverse is given by $(x,g)^{-1}=(xg, g^{-1})$, for all $(x, g) \in X \rtimes \Gamma$.
\end{enumerate}
\end{defn}

Let us recall the definition of a continuous field of Hilbert spaces over a compact space. Let $X$ be a compact topological space, and let $\big(\mathcal{H}_x\big)_{x \in X}$ be a family of Banach spaces. Denote $\mathcal{H}=\bigsqcup_{x \in X}\mathcal{H}_{x}$. Let $\Theta(X, \mathcal{H})$ be a collection of sections $s: X \to \mathcal{H}$ satisfying $s(x) \in \mathcal{H}_{x}$, for all $x \in X$.

\begin{defn}\label{defn_continuous_field}
Let $X$ be
a compact space. A continuous field of Banach spaces over $X$ is a family of Banach spaces $\big(\mathcal{H}_x\big)_{x \in X}$, with a set of sections $\Theta(X, \mathcal{H})$, such that
\begin{enumerate}
  \item the set of sections $\Theta(X, \mathcal{H})$ is a linear subspace of the direct product $\prod_{x \in X} \mathcal{H}_x$,
  \item for every $x \in X$, the set of all $s(x)$ for all $s \in \Theta(X, \mathcal{H})$ is dense in $\mathcal{H}_x$,
  \item for every $s \in \Theta(X, \mathcal{H})$, the function $x \mapsto \|s(x)\|$ is a continuous function on $X$,
  \item let $s: X \to \mathcal{H}$ be a section, i.e. $s(x) \in \mathcal{H}_{x}$, for all $x \in X$. If for every $x \in X$, and every $\epsilon>0$, there exists a section $s' \in \Theta(X, \mathcal{H})$ such that $\|s(y)-s'(y)\|< \epsilon$ for all $y$ in some neighborhood of $x$, then $s \in \Theta(X, \mathcal{H})$.
\end{enumerate}
\end{defn}

If every fiber $\mathcal{H}_x$ is a Hilbert space, we will say $\big(\mathcal{H}_x\big)_{x \in X}$ is a continuous field of Hilbert spaces. If every fiber is a $C^*$-algebra and the collection of sections is closed under the $*$-operation and the multiplication, the continuous field is called a continuous field of $C^*$-algebras.

Let $X \rtimes \Gamma$ be a transformation groupoid associated with the right group action $\Gamma \curvearrowright X$, and let $\big(\mathcal{H}_x\big)_{x \in X}$ be a continuous field of Hilbert spaces over $X$. Let us recall the concept of the affine isometric action of $X \rtimes \Gamma$ on  the continuous field of Hilbert spaces $\big(\mathcal{H}_x\big)_{x \in X}$.

\begin{defn}
Let $\big( \mathcal{H}_x \big)_{x \in X}$ be a continuous field of Hilbert spaces over $X$. We say that the transformation groupoid $X \rtimes\Gamma$ acts on $\big( \mathcal{H}_x \big)_{x \in X}$ by affine isometries if
\begin{enumerate}
\item for each $(x,g)$, there exists an affine isometric map $V_{(x,g)}:\mathcal{H}_{xg} \to \mathcal{H}_{x}$;
\item $V_{(x,g)}V_{(xg,h)}=V_{(x,gh)}$, for all $x \in X$, $g,h \in \Gamma$;
\item $V_{(x,e)}=id_{H_x}$.
\item for each continuous section $s \in \Theta(X, \mathcal{H})$ and each $g \in \Gamma$, the section $x \mapsto V_{(x, g)}(s(xg))$ is a continuous section.
\end{enumerate}
\end{defn}

An affine isometry $V_{(x,g)}:\mathcal{H}_{xg} \to \mathcal{H}_{x}$ is an isometry of the form $$V_{(x, g)}(v)=U_{(x, g)}(v)+b(x, g),$$ for all $v \in \mathcal{H}_{xg}$, where  $U_{(x,g)}:\mathcal{H}_{xg} \to \mathcal{H}_{x}$ is a unitary map, $b(x, g)$ is a vector in $ \mathcal{H}_x$. The above condition (2) implies that $b(x, gh)=U_{(x, g)}(b(xg, h))+b(x, g)$, for all $x \in X$, $g, h \in \Gamma$.

Since each fiber of the continuous field is a Hilbert space, we can define a $C^*$-algebra $\mathcal{A}(\mathcal{H}_x)$ associated with each fiber $\mathcal{H}_x$. Then we obtain a bundle of $C^*$-algebras $\big(\mathcal{A}(\mathcal{H}_x)\big)_{x \in X}$. Next, we will introduce a structure of a continuous field of $C^*$-algebras for the bundle $\big(\mathcal{A}(\mathcal{H}_x)\big)_{x \in X}$, some more details can also be found in \cite{HiGu}.

 A function $s: X \to \bigsqcup_{x \in X}\mathcal{A}(\mathcal{H}_x)$ is said to be a continuous section, if it satisfies

\begin{enumerate}
\item $s(x) \in \mathcal{A}(\mathcal{H}_x)$, for all $x \in X$,

\item for each $x \in X$, $\epsilon>0$, there exists a neighborhood $x \in U \subset X$, such that for each $y \in U$, there is a linearly isometric embedding $\phi_y: \mathbb{R}^n \to \mathcal{H}_y$ satisfying that
\begin{itemize}
\item  for each $1 \leq i \leq n$, the map $y \mapsto \phi_y(e_i)$ is a local continuous section over $U$, where $\{e_i\}_{i=1}^n$ is an orthonormal basis of $\mathbb{R}^n$,
\item there is an element $v \in \mathcal{A}(\mathbb{R}^n)$, such that
   $$\|(\phi_y)_*(v)-s(y)\|_y < \epsilon,$$
    for all $y \in U$, where the $(\phi_y)_*:  \mathcal{A}(\mathbb{R}^n)\to \mathcal{A}(\mathcal{H}_y)$ is induced by the linear embedding $\phi_y: \mathbb{R}^n \to \mathcal{H}_y$.
\end{itemize}
\end{enumerate}
The above definition gives rise to a continuous field structure on $\big(\mathcal{A}(\mathcal{H}_x)\big)_{x \in X}$. For any $x \in X$, note that there exists a collection of sections $e_n \in \Theta(X, \mathcal{H}) $, such that $\{e_n(x)\}_{n \in \mathbb{N}}$ is an orthonormal basis of the fiber bundle  $\mathcal{H}_x$. Indeed, let $\{a_n \in \Theta(X, \mathcal{H})\}_{n \in\mathbb{N}}$ be the collection of continuous sections such that $\{a_n(x)\}$ is a basis for $\mathcal{H}_x$. For each positive integer $n$, there exists a neighborhood $x \in U'_n$, such that the collection $\{a_i(y): 1\leq i \leq n\}$ is linearly independent for each $y \in U'_n$. By the Gram--Schmidt process, one can find a family of continuous local sections $\{e'_i: 1\leq i \leq n\}$ over $U'_n$, such that the collection of vectors $\{e'_i(y): 1 \leq i \leq n\}$ are mutually orthogonal with norm one for each $y \in U'_n$. By Urysohn Lemma, we can find a open set $U_n \subset U'_n$ and a function $f: X \to [0, 1]$, such that $\text{supp}(f) \subset U'_n$ and $f(y)=1$ for each $y \in U_n$. For each $i$, we obtain a global section $e_i$ by setting $e_i(y)=f(y)e'_n(y)$ for all $y \in U'_n$ and extending by zero outside $U'_n$. By induction on $n$, we obtain such a collection of continuous global sections $\{e_n\}_{n \in \mathbb{N}}$.

The $C^*$-algebra of all continuous sections of the continuous field of $C^*$-algebras is denoted by $\mathcal{A}(X, \mathcal{H})$.

Define a $\Gamma$-action on $\mathcal{A}(X, \mathcal{H})$ as follows. For each $\gamma \in \Gamma$, we have an isometry $V_{(x, \gamma)}: \mathcal{H}_{x\gamma}\to \mathcal{H}_x$ given by
$$V_{(x, \gamma)}(v)=U_{(x, \gamma)}(v)+b(x, \gamma),$$
for all $v \in \mathcal{H}_{x\gamma}$ and $x \in X$. A local continuous  affine distribution over an open subset $U\subset X$ is a collection  of affine subspaces $\{E_a(y)\}_{y \in U}$, such that there exist a collection of continuous local sections $\{s, v_1, v_2, \cdots, v_k\}$ over $U$, for which $E_a(y)= s(y)+\text{span}\{v_1(y), v_2(y), \cdots , v_k(y)\}$ for each $y \in U$. The associated linear subspace consisting of differences of elements in $E_a(y)$ is denoted by $E^0_a(y)=\text{span}\{v_1(y), v_2(y), \cdots, v_k(y)\}$. We have a continuous local distribution of $C^*$-algebras $\big(\text{Cliff}(E^0_a(y)\big)_{y \in U}$. Because $(V_{(x\gamma^{-1}, \gamma)}\big)_{x \in X}$ is a collection of continuous isometries for each $\gamma \in \Gamma$ , we obtain another continuous local affine distribution $\{V_{(y\gamma^{-1}, \gamma)}(E_a(y))\}_{y \in U}$ over $\gamma(U)$ with associated linear distribution $\{U_{(y\gamma^{-1}, \gamma)}(E_a(y))\}_{y\in U}$. The unitary $U_{(y, \gamma^{-1})}: H_{y\gamma^{-1}}\to H_y$ induce a homomorphism
$$U_{(y, \gamma^{-1})}: \text{Cliff}(U_{(y\gamma^{-1}, \gamma)}(E^0_a(y))) \to \text{Cliff}(E^0_a(y)),$$
for all $y \in U$.

Thus, for every $\gamma \in \Gamma$, we get a homomorphism
$$\gamma: \mathcal{A}(E_a(y)) \to \mathcal{A}(V_{(y \gamma^{-1}, \gamma)}E_a(y)),$$
by
$$\gamma\cdot (f \widehat{\otimes}h)=f \widehat{\otimes} \gamma \cdot h,$$
for all $f \in \mathcal{S}$, $h: E_a(y) \to \text{Cliff}(E_a^0(y))$, where $(\gamma \cdot h)(v)=U_{(y\gamma^{-1}, \gamma)}(h(V_{(y, \gamma)}(v)))$.

Let $\{E_b(y)\}_{y \in U}$ be another continuous affine distribution with $E_a(y) \subset E_b(y)$, for all $y \in U$. Then, there exists a linear continuous distribution $\{E_{ba}(y)\}_{y \in U}$ such that $E_b(y)=E_{ba}(y) \oplus E_a(y)$ for all $y \in U$.

Let $E_a=\bigsqcup_{y \in U}E_a(y)$. Define $\mathcal{A}(U, E_a)$ to be the algebra of all the bounded continuous sections $s: U \to \mathcal{A}(E_a)$ with $s(y) \in \bigsqcup_{y \in U}\mathcal{A}(E_a(y))$ for all $y \in U$. For all $\gamma \in \Gamma$, denote $\gamma E_a(y)=V_{(y\gamma^{-1}, \gamma)}(E_a(y))$,  $U_{\gamma} E_a=\bigsqcup_{y \in U} U_{(y, \gamma^{-1},\gamma)} E_a(y)$ and $\gamma E_a=\bigsqcup_{y \in U} \gamma E_a(y)$.

\begin{lem}\label{lem,diagr_for_action}
The following diagram
\begin{equation*}
\xymatrix{
\mathcal{A}(U, E_a)\ar[d]^{\gamma}\ar[r]^{\beta_{U, ba}}& \mathcal{A}(U, E_b)\ar[d]^{\gamma}\\
	\mathcal{A}(\gamma U, \gamma E_a)\ar[r]^{\beta^{\gamma}_{\gamma U, ba}}& \mathcal{A}(\gamma U, \gamma E_b).}
\end{equation*}
is commutative, where the maps $\beta_{U, ba}$ and $\beta^{\gamma}_{\gamma U, ba}$ are defined fiber-wise.
\end{lem}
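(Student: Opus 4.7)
The plan is to verify commutativity of the square fiber-wise in $y \in U$. Since $\beta_{U, ba}$, $\beta^{\gamma}_{\gamma U, ba}$ and the two vertical $\Gamma$-action maps are all defined point-by-point, commutativity of the diagram of section algebras reduces to commutativity, for each $y \in U$, of the induced square of $C^{\ast}$-homomorphisms among $\mathcal{A}(E_a(y))$, $\mathcal{A}(E_b(y))$, $\mathcal{A}(\gamma E_a(y))$ and $\mathcal{A}(\gamma E_b(y))$. Moreover, since $\mathcal{A}(E_a(y)) = \mathcal{S} \widehat{\otimes} \mathcal{C}(E_a(y))$ and all four maps are $\ast$-homomorphisms, it is enough to check the identity on elementary tensors $f \widehat{\otimes} h$ with $f \in \mathcal{S}$ and $h \in \mathcal{C}(E_a(y))$.

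Next I would record the key structural compatibility between the Clifford-valued function $C_{ba} \colon E_b(y) \to \text{Cliff}(E_b^0(y))$ and the affine isometry $V := V_{(y\gamma^{-1}, \gamma)} \colon \mathcal{H}_y \to \mathcal{H}_{y\gamma^{-1}}$. Its linear part $U := U_{(y\gamma^{-1}, \gamma)}$ sends the orthogonal decomposition $E_b^0(y) = E_{ba}^0(y) \oplus E_a^0(y)$ isometrically onto the orthogonal decomposition $(\gamma E_b(y))^0 = (\gamma E_{ba}(y))^0 \oplus (\gamma E_a(y))^0$, and hence induces an even $\ast$-homomorphism $U_{\ast} \colon \text{Cliff}(E_b^0(y)) \to \text{Cliff}((\gamma E_b(y))^0)$. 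Writing any $v \in E_b(y)$ uniquely as $v = v_{ba} + v_a$ with $v_{ba} \in E_{ba}^0(y)$ and $v_a \in E_a(y)$, the identity $V(v) = U(v_{ba}) + V(v_a)$ is precisely the decomposition of $V(v) \in \gamma E_b(y)$ with respect to $\gamma E_a(y)$. Consequently, as Clifford-valued functions on $E_b(y)$,
\[
C^{\gamma}_{ba} \circ V \;=\; U_{\ast} \circ C_{ba}.
\]

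With this compatibility in hand, the verification becomes a direct computation. By definition $\beta_{ba}(f \widehat{\otimes} h) = f(X \widehat{\otimes} 1 + 1 \widehat{\otimes} C_{ba})(1 \widehat{\otimes} \tilde{h})$, and the $\Gamma$-action fixes $f \in \mathcal{S}$, transforms the Clifford-valued function $C_{ba}$ into $U_{\ast} \circ C_{ba} \circ V^{-1} = C^{\gamma}_{ba}$ by the identity above, and transforms $\tilde{h}$ into $\widetilde{\gamma \cdot h}$. A short functional-calculus manipulation then gives
\[
\gamma \cdot \beta_{ba}(f \widehat{\otimes} h) \;=\; f\bigl(X \widehat{\otimes} 1 + 1 \widehat{\otimes} C^{\gamma}_{ba}\bigr)\bigl(1 \widehat{\otimes} \widetilde{\gamma \cdot h}\bigr) \;=\; \beta^{\gamma}_{ba}\bigl(f \widehat{\otimes} \gamma \cdot h\bigr) \;=\; \beta^{\gamma}_{ba}\bigl(\gamma \cdot (f \widehat{\otimes} h)\bigr),
\]
which is the desired commutativity.

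The subtlest point, and where I expect the bookkeeping to demand the most care, is keeping straight the different roles of the linear part $U$ and the full affine isometry $V$: the Clifford algebras and the Clifford-valued map $C_{ba}$ transport via the unitary $U$, whereas the argument of the scalar function $h$ (an affine coordinate on $E_a$) transports via the full affine isometry $V$. The key identity $C^{\gamma}_{ba} \circ V = U_{\ast} \circ C_{ba}$ is precisely the statement that the translation vector $b(y\gamma^{-1}, \gamma)$ is absorbed entirely into the $\gamma E_a$-component of the decomposition, leaving the $(\gamma E_{ba})^0$-component to be moved by $U$ alone. Once this is set up carefully, what remains is a straightforward identity of unbounded multipliers in $\mathcal{A}(\gamma E_b(y))$.
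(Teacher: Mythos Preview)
Your proposal is correct and follows essentially the same route as the paper's proof: both reduce to elementary tensors $f \widehat{\otimes} h$ and verify that the Clifford-valued function $C_{ba}$ is intertwined with $C^{\gamma}_{ba}$ via the affine/unitary pair $(V,U)$. The only cosmetic difference is that the paper invokes Stone--Weierstrass to reduce to the generators $g_0(x)=e^{-x^2}$ and $g_1(x)=xe^{-x^2}$ of $\mathcal{S}$ and computes explicitly, whereas you handle general $f$ at once by appealing to the compatibility of $\ast$-isomorphisms with functional calculus.
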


\begin{proof}
Since $\mathcal{S}$ is generated by $g_0(e)=e^{-x^2}$ and $g_1(x)=xe^{-x^2}$ by the Stone-Weierstrass theorem, it suffices to show that
$$\gamma(\beta_{U, ba}(g \widehat{\otimes}h))=\beta^{\gamma}_{\gamma U, ba}(\gamma(g \widehat{\otimes} h)),$$
for $g$ equal to $g_0$ or $g_1$ and $h: U \to \text{Cliff}(E_a)$ a continuous local section over $U$. We will prove the lemma for $g=g_0$; the case for $g=g_1$ can be proved similarly.

Let $C_{ba}$ and $C_{ba}^{\gamma}$ be the Clifford multiplications on $\mathcal{C}(E_a)$ and $\mathcal{C}(\gamma E_a)$ under the transformation $\varphi_{a, y}:E_a(y) \to \gamma E_a(y)$ for each $y \in U$.
Since we have $E_b=E_{ba}\oplus E_a$, and $\gamma(E_b)=U_{\gamma}(E_{ba})+\gamma E_a$, then
\begin{align*}
\beta_{U, ba}^{\gamma}(\gamma(g_0\widehat{\otimes}h)) &=\beta_{U, ba}^{\gamma}((g_0\widehat{\otimes}\gamma(h)))\\
&=g_0(X \widehat{\otimes}1 +1 \widehat{\otimes}C^{\gamma}_{ba})(1 \widehat{\otimes}(\widetilde{\gamma h}))\\
&=g_0(x)\widehat{\otimes}g_0(U_{\gamma}v_{ba})\gamma h(v_a),
\end{align*}
and
$$\gamma(\beta_{U, ba}(g_0 \widehat{\otimes}h) )=\gamma (g_0(x)\widehat{\otimes} g_0(\|v_{ba}\|) h(v_a))=g_0(x)\widehat{\otimes}g_0(\|v_{ba}\|)\gamma h(v_a),
$$
where $v_b$, $v_a$, $v_{ba}$ are continuous local sections over $U$ with $v_b(y)=v_{ba}(y)+ v_a(y)$ for all $y \in U$, and $\gamma(v_b)=U_{\gamma}(v_{ba})+\gamma (v_a)$. Since $U_{\gamma}$ is unitary,
thus
$$\gamma(\beta_{U, ba}(g \widehat{\otimes}h))=\beta^{\gamma}_{\gamma U, ba}(\gamma(g \widehat{\otimes} h)),$$
for all $g=g_0$.
\end{proof}

Because of the definition of $\mathcal{A}(X, \mathcal{H})$,  and the above commutative diagram, we have a $\Gamma$-action on $\mathcal{A}(X, \mathcal{H})$.

Define a fiber-wise Bott map $\beta_t: C(X)\widehat{\otimes} \mathcal{S} \to \mathcal{A}(X, \mathcal{H}) $  as follows. Viewing an element in $C(X) \widehat{\otimes} \mathcal{S}$ as a continuous function $f: X \to \mathcal{S}$, we obtain an element $\beta_t(f_x) \in \mathcal{A}(\mathcal{H}_x)$ for each $x \in X$. By the definition of the continuous field structure, it is easy to check that $\big(\beta_t(f_x)\big)_{x \in X} \in \mathcal{A}(X,  \mathcal{H})$. Thus, we have an asymptotic morphism
\begin{align*}
\beta_t:C(X)\widehat{\otimes}\mathcal{S} &\to \mathcal{A}(X, \mathcal{H})\\
 \big(f_x\big)_{x \in X}& \mapsto \big(\beta_t(f_x)\big)_{x \in X},\\
\end{align*}
for all $t \in [1, \infty)$.

We have an affine isometric action of $\Gamma$ on the continuous field of Hilbert spaces $\big(\mathcal{H}_x\big)_{x \in X}$, and it is easy to check that $(\beta_t)_{t \in [1, \infty)}$ is an asymptotic $\Gamma$-equivariant morphism. This asymptotic morphism induces a map on $K$-theory of the reduced crossed products
$$\beta_*: K_*((C(X) \otimes\mathcal{S})\rtimes_{\operatorname{r}} \Gamma_0)\to K_*(\mathcal{A}(X, \mathcal{H}) \rtimes_{\operatorname{r}} \Gamma_0),$$
for every finite subgroup $\Gamma_0\leq \Gamma$.
Following the argument in \cite{KaHiTr}, we also have a fiber-wise defined Dirac map. Let us briefly recall the definition of the Dirac map on each fiber $\mathcal{A}(\mathcal{H}_x)$ for each $x \in X$.

Let $E_a(x) \subset \mathcal{H}_x$ be a finite-dimensional affine subspace. Define $V_a(x)$ to be the Hilbert space of square integrable functions from $E_a(x)$ into $\text{Cliff}(E_a^0(x))$, where $E_a^0(x)$ is the linear space of differences between pairs of vectors in $E_a(x)$, and the norm on $\text{Cliff}(E_a^0(x))$ is obtained by fixing an orthonormal basis on $E_a^0(x)$. If $E_a(x) \subset E_b(x)$, then there is a canonical isomorphism
$$V_b(x)\cong V_{ba}(x) \widehat{\otimes} V_a(x)$$
where $V_{ba}(x)$ is the Hilbert space associated with the orthogonal complement $E_{ba}^0(x)$ of $E_a^0(x)$ in $E_b^0(x)$.

We define a unit vector $\xi_0\in V_{ba}(x)$ by
$$\xi_0(v_{ba})=\pi^{-\text{dim}(E^0_{ba}(x))/4} \exp(-\frac{1}{2}\|v_{ba}\|^2),$$
for all $v_{ba} \in E^0_{ba}$. Regarding $V_{a}(x)$ as a subspace of $V_b(x)$ via the isometry $\xi \mapsto \xi_0\widehat{\otimes} \xi$, we define

$$V(x)=\varinjlim V_a(x).$$

Using the similar method of construction of the continuous field structure on $\big(\mathcal{A}(\mathcal{H}_x)\big)_{x \in X}$, we obtain a continuous field structure on $\big(\mathcal{K}(V(x))\big)_{x \in X}$, where $\mathcal{K}(V(x))$ is the algebra of all compact operator on $V(x)$ for each $x \in X$. Let $V =\bigsqcup_{x \in X} V(x)$. Define $\mathcal{K}(X, V)$ to be the $C^*$-algebra of all continuous sections of the continuous field $\big(\mathcal{K}(V(x))\big)_{x \in X}$. By the structure of the continuous field of $\big(\mathcal{K}(V(x))\big)_{x \in X}$, we have that the $K$-theory of the $C^*$-algebra $C(X)$ is the same as the $K$-theory of $\mathcal{K}(X, V)$.

Denote by $\mathfrak{s}(x)=\varinjlim\mathfrak{s}_a(x)$ the direct limit of the Schwartz subspaces $\mathfrak{s}_a(x) \subset V_a(x)$. If $E_a(x)$ is a finite-dimensional affine subspace, then the Dirac operator $D_a(x)$ is defined by
$$D_a(x) \xi=\sum_{i=1}^{n} (-1)^{deg(\xi)}\frac{\partial (\xi)}{\partial x_i}v_i,$$
for every homogeneous element $\xi \in \mathfrak{s}(E_a(x))$,
where $\left\{v_1, v_2, \ldots, v_n\right\}$ is an orthonormal basis for $E^0_a(x)$, and $\left\{x_1, x_2, \ldots, x_n\right\}$ are the dual coordinates to $\left\{v_1, v_2, \ldots, v_n\right\}$. The Clifford operator on $E_a(a)$ is given by

$$C_a(x)\xi=\sum_{i=1}^{n}x_iv_i\xi.$$

For each fiber $\mathcal{H}_x$, choose the dense subset $E(x):=\left\{s(x): s \in \Theta(X, \mathcal{H}) \right\}$ where $\Theta(X, \mathcal{H})$ is the space of sections in Definition \ref{defn_continuous_field}. Fix a direct sum decomposition
$$E(x)=E_0(x)\oplus E_1(x)\oplus E_2(x) \oplus \cdots,$$
where each $E_i(x)$ is a finite-dimensional linear subspace of $E(x)$. For each $n$, we define an unbounded operator $B_{n, t}$ on $V(x)=\varinjlim V_n(x)$,  by the formula

$$
B_{n, t}=t_0D_0(x)+t_1D_1(x)+\cdots+t_{n-1}D_{n-1}(x)+t_n(D_n(x)+C_n(x))+\cdots
$$
where $t_{i}=1+i/t$, and $V_n(x)$ is the Hilbert space of square integrable functions from $E_n(x)$ to $\text{Cliff}(E_n(x))$. This infinite sum is well-defined since any vector in the Schwartz space $\mathfrak{s}(x)$ can be approximated by the one which has only finitely many nonzero terms in its infinite series. It is well-known that the operators $B_{n, t}(x)$ are essentially selfadjoint. Following the argument in \cite{KaHiTr}, we obtain an asymptotic morphism $\alpha^n$ from $\mathcal{A}(E_0(x)\oplus E_1(x)\oplus\cdots \oplus E_n(x))$ to $\mathcal{S}\widehat{\otimes}\mathcal{K}(V(x))$ by
$$\alpha^n_{t}(x)(f \widehat{\otimes} h)=f_t(X \widehat{\otimes}1+ 1 \widehat{\otimes}B_{n, t})(1 \widehat{\otimes}M_{h_t}),$$
for each $f \widehat{\otimes} h \in \mathcal{A}(E_0(x)\oplus E_1(x)\oplus\cdots \oplus E_n(x))$ where $h_t(v)=h(v/t)$ for all $t \in [1, \infty)$, $v \in E_0(x)\oplus E_1(x)\oplus\cdots \oplus E_n(x)$ and $M_{h_t}$ is the operator of left multiplication by the function $h_t$. Moreover, the diagram

\begin{tikzcd}
\mathcal{A}(E_0(x)\oplus E_1(x)\oplus\cdots \oplus E_n(x)) \arrow[r, "\alpha^n(x)"] \arrow[d,"\beta" ]
& \mathcal{S}\widehat{\otimes}\mathcal{K}(V(x))  \arrow[d, "=" ] \\
\mathcal{A}(E_0(x)\oplus E_1(x)\oplus\cdots \oplus E_{n+1}(x)) \arrow[r,  "\alpha^{n+1}(x)" ]
&\mathcal{S}\widehat{\otimes} \mathcal{K}(V(x))
\end{tikzcd}\\
is asymptotically commutative. As a result, we get an asymptotic morphism $\alpha(x): \mathcal{A}(\mathcal{H}_x) \to \mathcal{S} \widehat{\otimes}\mathcal{K}(V(x))$. Moreover, we obtain an asymptotic morphism
$$\alpha: \mathcal{A}(X, \mathcal{H}) \to \mathcal{S} \widehat{\otimes} \mathcal{K}(X, V).$$

Following the argument in \cite{KaHiTr}, the map induced by $\alpha$ on $K$-theory is the inverse map of the fiber-wise defined Bott map. Consequently, we have the following result.

\begin{thm}\label{thm,eq_Bott_isom_fini_gro}
Let $\Gamma$ be a countable discrete group and $X$ a compact Hausdorff space with $\Gamma$-action. Assume the associated transformation groupoid $X \rtimes \Gamma $ acts on a continuous field of Hilbert spaces $\big(\mathcal{H}_x\big)_{x \in X}$ by affine isometries. Then for each finite subgroup $\Gamma_0\leq \Gamma$, the Bott map
$$\beta_*: K_*((C(X)\otimes\mathcal{S})\rtimes_{\operatorname{r}} \Gamma_0)\to K_*(\mathcal{A}(X, \mathcal{H}) \rtimes_{\operatorname{r}} \Gamma_0)$$
induces an isomorphism on $K$-theory.
\end{thm}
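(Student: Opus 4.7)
The plan is to adapt the Dirac-dual-Dirac argument of Higson, Kasparov and Trout to the continuous field setting. Since the Dirac asymptotic morphism $\alpha: \mathcal{A}(X, \mathcal{H}) \to \mathcal{S} \widehat{\otimes} \mathcal{K}(X, V)$ has already been assembled fiber-wise and inverts the Bott map on each fiber by the infinite-dimensional Bott periodicity theorem stated above, the remaining task is to upgrade this fiber-wise fact to an equivariant statement for the reduced crossed product by the finite subgroup $\Gamma_0$.

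First I would verify that both $\beta_t$ and $\alpha_t$ are $\Gamma_0$-equivariant asymptotic morphisms. For $\beta_t$, equivariance follows directly from Lemma \ref{lem,diagr_for_action}. For $\alpha$, the point is to choose the decomposition $E(x) = E_0(x) \oplus E_1(x) \oplus \cdots$ into finite-dimensional subspaces in a $\Gamma_0$-invariant manner; this is possible because $\Gamma_0$ is finite. Starting from the continuous orthonormal sections $\{e_n\}_{n \in \mathbb{N}}$ constructed earlier in this section, one averages the span of the first $n$ vectors over $\Gamma_0$ and applies a fiber-wise Gram--Schmidt process to obtain $\Gamma_0$-invariant, continuously varying finite-dimensional subspaces. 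With this choice, the unitary parts $U_{(x, \gamma)}$ of the affine action intertwine the operators $B_{n, t}$ at $x$ and at $\gamma x$, so that $\alpha$ becomes a $\Gamma_0$-equivariant asymptotic morphism of $C(X)$-algebras.

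Second, since $\Gamma_0$ is finite (hence amenable), the reduced and maximal crossed products by $\Gamma_0$ coincide, and both equivariant asymptotic morphisms descend to induced maps on $K$-theory of the crossed products, giving
$$\beta_*: K_*((C(X) \otimes \mathcal{S}) \rtimes_{\operatorname{r}} \Gamma_0) \to K_*(\mathcal{A}(X, \mathcal{H}) \rtimes_{\operatorname{r}} \Gamma_0)$$
and
$$\alpha_*: K_*(\mathcal{A}(X, \mathcal{H}) \rtimes_{\operatorname{r}} \Gamma_0) \to K_*((\mathcal{S} \widehat{\otimes} \mathcal{K}(X, V)) \rtimes_{\operatorname{r}} \Gamma_0).$$
The target of $\alpha_*$ is identified with $K_*((C(X) \otimes \mathcal{S}) \rtimes_{\operatorname{r}} \Gamma_0)$ via the $\Gamma_0$-equivariant Morita equivalence between $\mathcal{K}(X, V)$ and $C(X)$ implemented by the continuous field $V$, so $\alpha_*$ becomes a candidate inverse for $\beta_*$.

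Third, I would verify that both compositions $\alpha_* \circ \beta_*$ and $\beta_* \circ \alpha_*$ are the identity. Fiber-wise at each $x \in X$, these compositions are precisely the classical Higson-Kasparov-Trout compositions, which are homotopic to the identity via the rotation homotopy of \cite{KaHiTr}. The crucial point is that, with the $\Gamma_0$-invariant and continuous-in-$x$ choice of the subspaces $E_n(x)$ made above, the rotation homotopy can be run simultaneously over all fibers, yielding a genuine $\Gamma_0$-equivariant homotopy of asymptotic morphisms of $C(X)$-algebras. Passing to $K$-theory of the reduced crossed products then shows that $\beta_*$ is an isomorphism with inverse $\alpha_*$.

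The main obstacle I anticipate is organizing the rotation homotopy uniformly over the field $X$ in a $\Gamma_0$-equivariant fashion; this requires the finite-dimensional subspaces $E_n(x)$ to be chosen $\Gamma_0$-invariantly and continuously, which is resolved by the finiteness of $\Gamma_0$ together with the explicit continuous orthonormal sections of $\mathcal{H}$ constructed earlier in this section.
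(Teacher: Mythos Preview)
Your proposal is correct and follows essentially the same approach as the paper. The paper's proof is in fact just the single sentence ``Following the argument in \cite{KaHiTr}, the map induced by $\alpha$ on $K$-theory is the inverse map of the fiber-wise defined Bott map,'' so your write-up is a faithful and considerably more detailed unpacking of exactly that strategy: construct the fiber-wise Dirac asymptotic morphism, observe it is $\Gamma_0$-equivariant (using finiteness of $\Gamma_0$ to arrange invariant finite-dimensional subspaces), and invoke the Higson--Kasparov--Trout rotation homotopy to conclude that $\alpha_*$ and $\beta_*$ are mutual inverses on the crossed product $K$-theory.
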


An affine isometric action of a transformation groupoid $X \rtimes \Gamma$ on a continuous field of Hilbert spaces $\big( \mathcal{H}_x \big)_{x \in X}$ is said to be proper, if for any $R > 0$, the set $\{g \in \Gamma: \exists x \in X ~\text{such that}~ V_{(x,g)}(B(xg, R)) \cap B(x,R)\neq \emptyset\}$ is finite, where $B(x,R)$ is the set of all elements in $H_{x}$ with norm less than $R$. Due to Tu (see \cite{Tu_BC}), the a-T-menability of the transformation groupoid guarantees the existence of a proper affine isometric action on a continuous field of Hilbert spaces $\big(\mathcal{H}_x\big)_{x \in X}$.

\begin{defn}
Let $X \rtimes \Gamma$ be a transformation groupoid. A continuous function $\varphi: X \rtimes \Gamma \to \mathbb{R}$ is said to be conditionally negative definite if

\begin{enumerate}\label{CND}
\item $\varphi(x,e)=0$, for all $x \in X$,

\item $\varphi(x, g)=\varphi(xg, g^{-1})$,

\item $\sum_{i,j=1}^n t_it_j \varphi(xg_i, g_i^{-1}g_j) \leq 0$, for all $\{t_i\}_{i=1}^n \subset \mathbb{R}$ with $\sum_{i=1}^n t_i=0$, $g_i \in \Gamma$, and $x \in X$.
\end{enumerate}

\end{defn}

A conditionally negative definite function $\varphi: X \rtimes \Gamma \to \mathbb{R}$ is said to be proper if for any $R>0$, the number of elements in the set $\left\{g \in \Gamma: \exists x \in X,~\text{such that}~ |\varphi(x, g)|\leq R\right\}$ is finite.

The concept of a-T-menability for groupoids was introduced by Tu in \cite{Tu_BC}.
\begin{defn}
A transformation groupoid $X \rtimes \Gamma$ is said to be a-T-menable if there exists a proper conditionally negative definite function $\varphi: X \rtimes \Gamma \to \mathbb{R}$.
\end{defn}

Now, let us recall the construction of the transformation groupoid from a coarsely embeddable group, by Skandalis, Tu, and Yu in \cite{Skan_Tu_Yu}.
\begin{prop}[\cite{Skan_Tu_Yu}]
Let $h: \Gamma \to \mathcal{H}_0$ be a coarse embedding. Then there exists a compact Hausdorff space $X$ with a $\Gamma$-action, such that
\begin{enumerate}
\item for any finite subgroup $\Gamma_0\leq \Gamma$, $X$ is $\Gamma_0$-contractible,
\item the groupoid $X \rtimes \Gamma$ has a proper continuous conditionally negative definite function.
\end{enumerate}
\end{prop}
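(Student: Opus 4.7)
The plan is to realize $X$ as the Gelfand spectrum of a carefully chosen unital, $\Gamma$-invariant commutative $C^{*}$-subalgebra of $\ell^{\infty}(\Gamma)$, and then pass to a space of probability measures to obtain the contractibility required in clause (1). The squared-distance kernel $k(g_1, g_2) := \|h(g_1) - h(g_2)\|^{2}$ is conditionally negative definite on $\Gamma \times \Gamma$, and for each $s \in \Gamma$ the function $\kappa_s : g \mapsto k(g, gs)$ satisfies $0 \leq \kappa_s(g) \leq \rho_+(|s|)^{2}$, so $\kappa_s \in \ell^{\infty}(\Gamma)$.

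First I would take $A$ to be the smallest unital $\Gamma$-invariant $C^{*}$-subalgebra of $\ell^{\infty}(\Gamma)$ containing all of the $\kappa_s$, and set $Y := \mathrm{Spec}(A)$. Then $Y$ is a compact Hausdorff space carrying a continuous $\Gamma$-action, with $\Gamma$ embedded densely. The functions $\kappa_s \in C(Y)$ assemble into a continuous function $\varphi_Y(y, s) := \widetilde{\kappa_s}(y)$ on the groupoid $Y \rtimes \Gamma$. The three defining conditions for a conditionally negative definite function on the groupoid reduce, by density of $\Gamma$ in $Y$, to the same identities for the kernel $k$ on $\Gamma \rtimes \Gamma$, which hold because $k$ is a squared-distance kernel from a Hilbert space. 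Properness is inherited from the pointwise lower bound $\varphi_Y(g, s) = \|h(g) - h(gs)\|^{2} \geq \rho_-(|s|)^{2}$ together with the properness of the left-invariant metric on $\Gamma$ (which forces only finitely many $s$ to have $|s|$ below any given constant).

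The main obstacle is clause (1), $\Gamma_0$-contractibility for every finite subgroup $\Gamma_0 \leq \Gamma$, which fails in general for $Y$. To address this I would pass to $X := P(Y)$, the space of Borel probability measures on $Y$ equipped with the weak-$*$ topology, which is compact and convex and carries an induced $\Gamma$-action by pushforward. For each finite subgroup $\Gamma_0 \leq \Gamma$ the averaged measure $\mu_0 := \frac{1}{|\Gamma_0|}\sum_{\gamma \in \Gamma_0} \gamma \cdot \delta_{y_0}$ (for any fixed $y_0 \in Y$) is $\Gamma_0$-invariant, and the affine straight-line homotopy $H(\mu, t) := (1-t)\mu + t \mu_0$ is then a $\Gamma_0$-equivariant contraction of $X$ onto $\{\mu_0\}$. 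The CND function extends by integration, $\varphi(\mu, s) := \int_Y \varphi_Y(y, s)\, d\mu(y)$; the CND property survives because the integrand is CND in the groupoid variables for each fixed $y$, and properness survives because the lower bound by $\rho_-(|s|)^{2}$ holds pointwise in $y$. The delicate technical points I anticipate are verifying that the affine contraction is continuous in the weak-$*$ topology (this is where boundedness of each $\kappa_s$ is used) and ensuring that this choice of $X$ remains compatible with the continuous-field constructions of Section 3 used later in the paper.
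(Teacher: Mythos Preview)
Your approach is essentially the same as the paper's: take the Gelfand spectrum $X'$ of the unital $\Gamma$-invariant $C^*$-subalgebra of $\ell^\infty(\Gamma)$ generated by the squared-distance functions $g\mapsto\|h(g)-h(gs)\|^2$ (the paper also adjoins $c_0(\Gamma)$, which is harmless here), extend the CND kernel to $X'\rtimes\Gamma$ by density, and then pass to the compact convex space of probability measures $X=P(X')$ to obtain $\Gamma_0$-contractibility by affine averaging, extending the CND function by integration. The paper writes $\|h(y)-h(y\gamma)\|$ rather than its square, but your squared version is the correct one needed for conditional negative definiteness (and is indeed what the paper itself uses in the parallel construction of Section~4).
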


Let us describe the construction of the topological space $X$. For any fixed element $\gamma \in \Gamma$, we define a bounded function $f_{\gamma}:\Gamma \to \mathbb{R}$ by
 $$f_{\gamma}(y)=\|h(y)-h(y\gamma)\|,$$
for any $y \in \Gamma$.

Let $c_0(\Gamma)$ be the $C^*$-subalgebra of $\ell^{\infty}(\Gamma)$ consisting all functions vanishing at infinity. We define a $\Gamma$-action on $\ell^{\infty}(\Gamma)$ by $(\gamma \cdot f)(x)=f(x \gamma)$, for each $f\in \ell^{\infty}(\Gamma)$, $x, \gamma \in \Gamma$.

Let $X'$ be the spectrum of the unital commutative $\Gamma$-invariant $C^*$-subalgebra of $\ell^{\infty}(\Gamma)$ generated by all constant functions, $c_0(\Gamma)$ functions, and all functions of the form $f_{\gamma}$ together with their translations by group elements in $G$. It is obvious that every function $f_{\gamma}$ extends continuously to $X'$. Indeed, the space $X'$ is a compactification of $\Gamma$, and it admits a right action of $\Gamma$ induced by the $\Gamma$-action on $C(X')$ where $C(X')$ is viewed as a $C^*$-subalgebra of $\ell^{\infty}(\Gamma)$.

We obtain a continuous conditionally negative definite function defined on the transformation groupoid $X' \rtimes \Gamma$ by $\varphi'(y,\gamma)=f_{\gamma}(y)$.

Let $X$ be the probability space of $X'$. It is a second countable, compact space equipped with the weak-$*$ topology (A reference of weak-$*$ topology is Chapter 1 in \cite{DouglasBook}) and it admits a $\Gamma$-action induces by the action of $\Gamma$ on $X'$. We define a conditionally negative definite function on $X\rtimes \Gamma$ by
$$\varphi(m,\gamma)=\int_{X'}\varphi'(y, \gamma) d m(y)$$
for any $m \in X$.

\begin{prop}\label{prop_proper_CND_on_groupoid}
Let $\Gamma \to \mathcal{H}_0$ be the coarse embedding as above. The continuous map $\varphi: X \rtimes G \to \mathbb{R}$  defined above is a proper conditionally negative definite function.
\end{prop}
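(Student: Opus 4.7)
The plan is to verify the two defining properties of a proper conditionally negative definite function on the transformation groupoid $X \rtimes \Gamma$, namely the CND inequality and properness. Both follow by propagating the corresponding facts for $\varphi'$ on the subgroupoid $\Gamma \times \Gamma \subset X' \rtimes \Gamma$: first extending from $\Gamma$ to $X'$ by density and continuity, and then integrating against probability measures to pass from $X' \rtimes \Gamma$ to $X \rtimes \Gamma$.

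For conditional negative definiteness, I would start by verifying that $\varphi'$ is CND on $\Gamma \times \Gamma$. Using that $f_{g_i^{-1}g_j}(yg_i) = \|h(yg_i) - h(yg_j)\|$ for $y \in \Gamma$, the sum
\[
\sum_{i,j} t_i t_j \varphi'(yg_i, g_i^{-1}g_j) = \sum_{i,j} t_i t_j \|h(yg_i) - h(yg_j)\|
\]
is nonpositive by Schoenberg's theorem, since the norm is a CND kernel on any Hilbert space. Because every $f_\gamma$ extends continuously to $X'$ and $\Gamma$ acts on $X'$ by homeomorphisms, the map $y \mapsto \varphi'(yg_i, g_i^{-1}g_j)$ is continuous on $X'$, so the inequality above persists on all of $X'$ by the density of $\Gamma$. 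Unpacking the induced $\Gamma$-action on probability measures gives $\varphi(mg_i, g_i^{-1}g_j) = \int_{X'} \varphi'(yg_i, g_i^{-1}g_j)\, dm(y)$ for every $m \in X$, so integrating the pointwise inequality yields
\[
\sum_{i,j} t_i t_j \varphi(mg_i, g_i^{-1}g_j) \leq 0 \quad \text{whenever}\ \sum_i t_i = 0.
\]
The normalization $\varphi(m, e) = 0$ and the symmetry $\varphi(m, \gamma) = \varphi(m\gamma, \gamma^{-1})$ reduce similarly to the identities $f_e \equiv 0$ and $f_{\gamma^{-1}}(y\gamma) = f_\gamma(y)$ on $\Gamma$, which extend by continuity and are then integrated out.

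For properness, the strategy is to bound $\varphi$ from below using the coarse embedding $h$. Fix a proper left-invariant metric on $\Gamma$, so that $d(y, y\gamma) = |\gamma|$ for all $y, \gamma \in \Gamma$. The lower coarse embedding estimate then yields $f_\gamma(y) = \|h(y) - h(y\gamma)\| \geq \rho_-(|\gamma|)$ for every $y \in \Gamma$. By continuity and density, the same inequality holds for all $y \in X'$, and integration against any probability measure gives $\varphi(m, \gamma) \geq \rho_-(|\gamma|)$ for every $m \in X$. Since $\rho_- \to \infty$ and the word-metric on $\Gamma$ is proper, the set $\{\gamma \in \Gamma : \exists m \in X,\ \varphi(m, \gamma) \leq R\}$ is finite for every $R > 0$, proving properness.

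The main obstacle is keeping the bookkeeping transparent around the induced $\Gamma$-action on the space of probability measures and the extension of pointwise identities from the dense subgroup $\Gamma$ to all of $X'$. In particular, one must verify that the natural formula $\varphi(m \cdot g, \gamma) = \int f_\gamma(y g)\, dm(y)$ truly matches the groupoid composition, and that identities such as $f_{\gamma^{-1}}(y\gamma) = f_\gamma(y)$ continue to hold on the compactification. Once these compatibilities are in place, the rest of the proof is a routine transfer of the CND inequality and the coarse embedding estimate by continuous extension and integration.
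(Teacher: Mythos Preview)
Your proposal is correct and follows essentially the same route as the paper: verify the three CND conditions for $\varphi$ by reducing to the corresponding facts for $\varphi'$ on $X'\rtimes\Gamma$ and then integrating, and deduce properness from the lower control function $\rho_-$ of the coarse embedding. The paper is terser---it asserts without argument that $\varphi'$ is CND on $X'\rtimes\Gamma$ and that properness ``follows from the definition of $\varphi'$ and the fact that $h$ is a coarse embedding''---whereas you spell out the Schoenberg step, the density/continuity extension from $\Gamma$ to $X'$, and the uniform lower bound $\varphi(m,\gamma)\ge\rho_-(|\gamma|)$; but the underlying argument is the same.
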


\begin{proof}
It is obvious that condition (1) in the Definition  \ref{CND} is satisfied. Let us verify condition (2). For each $(x, g) \in X \rtimes \Gamma$, we have
\begin{align*}
\varphi(mg, g^{-1}) &= \int_{X'}\varphi'(y, g^{-1}) d (mg)\\
                                       &= \int_{X'}\varphi'(yg, g^{-1}) d m\\
                                        & = \int_{X'} \varphi'(y, g) d m\\
                                       &= \varphi(m, g).
\end{align*}
The third equality follows from $\varphi'(yg, g^{-1})=\varphi(y, g)$ for all $(y, g) \in X \rtimes \Gamma$. Condition (3) follows from the fact that $\sum_{i,j=1}^n t_it_j \varphi'(yg_i, g_i^{-1}g_j) \leq 0$, for all $\{t_i\}_{i=1}^n \subset \mathbb{R}$ with $\sum_{i=1}^n t_i=0$, $g_i \in \Gamma$, and $y \in X'$.

The properness of $\varphi$ follows from the definition of $\varphi'$ and the fact that the map $h: \Gamma \to \mathcal{H}_0$ is a coarse embedding.
\end{proof}

The space $X \times \Gamma$ is equipped with the product topology. Let $C_c(X \times \Gamma)$ be the $C^*$-algebra of all complex valued functions on $X \times \Gamma$ with compact support. Define
$$
C^0_{c}(X \rtimes \Gamma):=\left\{f \in C_c(X \times \Gamma): \sum_{g \in \Gamma}f(x,g)=0\right\}.
$$

Let $\varphi: X \rtimes \Gamma \to \mathbb{R}$ be a continuous, proper conditional negative definite
function. Then we can define a continuous field of Hilbert spaces as follows.

For each $x \in X$, consider a linear space $C^0_{c}(\Gamma):=\left\{f \in C_c(\Gamma): \sum_{g \in \Gamma}f(g)=0\right\}$, and define a sesquilinear form
$$
\left\langle\xi,\eta\right\rangle_x=-\frac{1}{2}\sum_{g, g' \in \Gamma}\xi(g) \widebar{\eta(g')}\varphi(xg^{-1}, gg'),
$$
for all $\xi, \eta \in C^0_{c}(\Gamma)$. Since $\varphi$ is conditionally negative definite type, the form above turns out to be positive semidefinite and one can quotient out by the zero subspace, denoted by $E_x$.
Then complete $E_x$ to a Hilbert space, denoted by $\mathcal{H}_x$.
For any function $f\in C^0_c(X \times \Gamma)$, we can view it as a continuous map $\xi: X \to C^0_c(\Gamma)$ by $\xi(x)=f(x, \cdot)\in C^0_c(\Gamma)$.

Let us introduce a continuous field structure on the collection of Hilbert spaces $\big(H_x\big)_{x \in X}$. It suffices to define the space of the continuous sections, denoted by $\Theta(X, \mathcal{H})$, where $\mathcal{H}=\bigsqcup_{x \in X} \mathcal{H}_x$. A map $\xi: X \to \mathcal{H} $ is called a continuous section, if it satisfies

\begin{enumerate}
\item $\xi(x) \in \mathcal{H}_x$, for every $x \in X$,

\item $\forall$ $x \in X$, $\forall$ $\epsilon >0$, there exists an element $\xi' \in C^0_{c}(X \times \Gamma)$, such that $\|\xi(y)-\xi'(y)\|_{\mathcal{H}_y} < \epsilon$ for all $y$ in some neighborhood of $x$.
\end{enumerate}

The affine isometric action of $X \rtimes \Gamma$ is defined as follows. For every $\gamma \in \Gamma$, and every $x \in X$, the unitary $U_{(x, \gamma)}: \mathcal{H}_{x \gamma} \to \mathcal{H}_x$ is defined by $U_{(x, \gamma)}(f)(g)=f(\gamma^{-1}g)$
for all $f \in E_x$, and for all $g \in \Gamma$, $\gamma\in \Gamma$, then extends to a unitary $U_{(x, \gamma)}: \mathcal{H}_{x\gamma} \to \mathcal{H}_x$.
The cocycle $b(x, g) $ is defined to be the element in $E_x$ represented by the function $\delta_g-\delta_e$. Let $V_{(x, \gamma)}(v)=U_{(x, \gamma)}(v)+b(x, \gamma)$ for all $v \in \mathcal{H}_{x \gamma}$, $(x, \gamma) \in X \rtimes \Gamma$.
It is easy to check that the collection of affine isometries $\big(V_{(x, \gamma)}\big)_{(x, \gamma)\in X \rtimes\Gamma}$ consists of a proper affine isometric action of $X \rtimes \Gamma$ on the continuous field of Hilbert spaces $\big(\mathcal{H}_x\big)_{x \in X}$.

\section{Twisted Roe algebras and twisted localization algebras}

Let $1 \to N \to G \to G/N \to 1 $ be a short exact sequence of countable discrete groups. In this section we will construct twisted Roe algebras and twisted localization algebras with coefficients in some $G$-$C^*$-algebra, and prove that the twisted Baum--Connes conjecture with coefficients holds for the group $G$, under the assumption that both $G$ and $G/N$ are coarsely embeddable into Hilbert spaces.

Fix a left invariant proper metric on $G$. This metric restricts to every subgroup of $G$ and the quotient group $G/N$ is endowed with the quotient metric.

\subsection{Some Geometric Constructions} In this subsection, we will construct a compact topological $G$-space $Y$, such that
\begin{enumerate}
\item for every subgroup $N' \leq G$ containing $N$ with finite index, i.e., $|N'/N| < \infty$, the transformation groupoid $X\rtimes N'$ is a-T-menable,
\item for every finite subgroup $G_0\leq G$, the space $Y$ is $G_0$-contractible.
\end{enumerate}
Let $N' \leq G$ be a subgroup containing $N$ with finite index. The fact that $|N'/N|< \infty$ implies that $N'$ is coarsely equivalent to $N$. Since $N$ is coarsely embeddable into a Hilbert space $\mathcal{H}_0$, we have that $N'$ is aslo coarsely embeddable into the Hilbert space $\mathcal{H}_0$. Let $h'_{N'}: N' \to \mathcal{H}_0$ be the coarse embedding map, and let $\rho^{N'}_-,\rho^{N'}_+: [0,\infty) \to [0, \infty)$ be two non-decreasing functions with $\lim_{t \to \infty}\rho^{N'}_-(t)=\infty$, such that
$$\rho^{N'}_-(d(x, y)) \leq \|h'_{N'}(x)-h'_{N'}(y)\|\leq \rho^{N'}_+(d(x, y)),$$
for all $x,y \in N'$.

Let $S \subset G$ be a set of the representatives of the left cosets $G/N'$. Then we have a decomposition $G =\bigsqcup_{g \in S } gN'$, and the coarse embedding $h'_{N'}:N' \to \mathcal{H}_0$ can be extended to a map $h_{N'}: G \to \mathcal{H}_0$ by
$$h_{N'}(g\cdot n)=h'_{N'}(n),$$
where $g \in S$, $n \in N'$. Since every element $g' \in G$ can be uniquely written as $g'=g n$ for some $g \in S$, $n \in N'$, the extension is well-defined, and it is not a coarse embedding in general.

For any fixed element $n \in N'$, define a function $f_n: G \to \mathbb{R}$ by
$$f_{n}(g)=\|h_{N'}(g)-h_{N'}(gn)\|^2$$
for all $g \in G$.

By the coarse embeddability of $N'$, $f_{n}$ is a bounded function on $G$ for each $n \in N'$. Unfortunately, $f_n$ could be an unbounded function on $G$ if the element $n\in G$ is not in $N'$. Let $Y'_{N'}$ be the spectrum of the unital commutative $G$-invariant $C^*$-subalgebra of $\ell^{\infty}(G)$ generated by all $c_0(G)$ functions, all constant functions and all functions of the form $f_{n}$ together with their right  translations. The right action of $G$ on $\ell^{\infty}(G)$ is defined by $(\gamma f)(g)=f(g \gamma)$, for all $\gamma, g \in G$, all $f \in \ell^{\infty}(G)$.
Accordingly, the compact space $Y'_{N'}$ admits a right action induced by the restriction of the right $G$-action on $\ell^{\infty}(G)$ to the $C^*$-subalgebra which is $*$-isomorphic to $C(Y'_{N'})$.

Define a function $\phi'_{N'}: G \rtimes N' \to \mathbb{R}$ by $\phi'(g, n)=\|h_{N'}(g)-h_{N'}(gn)\|^2$, for all $g \in G$, $n \in N'$. For each $n \in N'$, the bounded function $\phi'_{N'}(\cdot, n): G \to \mathbb{R}$ extends to a continuous function on $Y_{N'}$ by the definition of $Y_{N'}$. As a result, we obtain a continuous conditionally negative definite function $\phi'_{N'}: Y'_{N'} \rtimes N' \to \mathbb{R}$ by extending the map $\phi'_{N'}: G \rtimes N' \to \mathbb{R}$.
Then, replace $Y'_{N'}$ with the space of all the probability measures on $Y'_{N'}$, denoted by $Y_{N'}$. The space $Y_{N'}$ is a second countable compact space equipped with the weak-$*$ topology (c.f. \cite{DouglasBook}). The $G$-action on $Y'_{N'}$ induces a right action on $Y_{N'}$. We define a continuous function $\phi_{N'}: Y_{N'} \rtimes N \to \mathbb{R}$ by
$$\phi_{N'}(m, g)=\int_{Y'_{N'}} \phi'(y, g) d(mg),$$
for all $m \in Y_{N'}$, $g \in N'$.

\begin{prop}
The countinuous function $\phi_{N'}: Y_{N'} \rtimes N' \to \mathbb{R}$ is a proper conditionally negative definite function.
\end{prop}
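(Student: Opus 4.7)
The proof will closely mirror that of Proposition \ref{prop_proper_CND_on_groupoid}, verifying the four conditions of Definition \ref{CND} together with properness. The plan is first to work with $\phi'_{N'}$ at the level of the discrete groupoid $G \rtimes N'$, where explicit formulas are available, and then propagate everything to $Y'_{N'} \rtimes N'$ by continuity and density, and finally to $Y_{N'} \rtimes N'$ by integration against probability measures.

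Condition (1) is immediate since $\phi'_{N'}(y,e) = \|h_{N'}(y)-h_{N'}(y)\|^2 = 0$ for every $y \in G$, hence by continuity for every $y \in Y'_{N'}$, so $\phi_{N'}(m,e) = 0$. For condition (2), the identity
\[
\phi'_{N'}(yn, n^{-1}) = \|h_{N'}(yn) - h_{N'}(y)\|^2 = \phi'_{N'}(y,n)
\]
holds on $G \rtimes N'$ by inspection, extends to $Y'_{N'} \rtimes N'$ by continuity, and then a change of variables with respect to the push-forward $mn$ gives $\phi_{N'}(mn,n^{-1}) = \phi_{N'}(m,n)$, exactly as in the proof of Proposition \ref{prop_proper_CND_on_groupoid}.

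For condition (3), I would verify conditional negative definiteness pointwise on $G$. Fix $y \in G$, $\{g_i\} \subset N'$, and $\{t_i\} \subset \mathbb{R}$ with $\sum t_i = 0$. A direct expansion using $\phi'_{N'}(yg_i, g_i^{-1}g_j) = \|h_{N'}(yg_i) - h_{N'}(yg_j)\|^2$ yields
\[
\sum_{i,j} t_i t_j \,\phi'_{N'}(yg_i, g_i^{-1}g_j) \;=\; -2\Bigl\|\sum_i t_i\, h_{N'}(yg_i)\Bigr\|^2 \;\leq\; 0,
\]
using $\sum t_i = 0$ to kill the diagonal terms. Continuity of each $\phi'_{N'}(\cdot,n)$ on $Y'_{N'}$, together with density of $G$ in $Y'_{N'}$, extends this inequality to $Y'_{N'}$; integration against $m \cdot g_i$ then delivers the required inequality for $\phi_{N'}$.

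The main obstacle is properness, because $h_{N'}$ is \emph{not} a coarse embedding on all of $G$; it is merely a translate of $h'_{N'}$ on each left coset $gN'$, $g \in S$. The key observation is that for any $n \in N'$ and any $y = g n' \in G$ with $g \in S$ and $n' \in N'$, the element $yn = g(n'n)$ lies in the \emph{same} coset, so
\[
\phi'_{N'}(y,n) \;=\; \|h'_{N'}(n') - h'_{N'}(n'n)\|^2 \;\geq\; \rho^{N'}_{-}\!\bigl(d(n', n'n)\bigr)^2 \;=\; \rho^{N'}_{-}\!\bigl(d(e,n)\bigr)^2,
\]
by left-invariance of the metric on $G$. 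Hence the estimate $\phi'_{N'}(y,n) \geq \rho^{N'}_{-}(|n|)^2$ holds uniformly in $y \in G$, extends to $Y'_{N'}$ by continuity, and survives integration against any probability measure. Since $\rho^{N'}_{-}(t) \to \infty$ and the metric on $G$ is proper, any bound $\phi_{N'}(m,n) \leq R$ confines $n$ to a finite subset of $N'$, establishing properness.
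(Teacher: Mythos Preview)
Your proof is correct and follows essentially the same approach as the paper: the paper simply invokes Proposition~\ref{prop_proper_CND_on_groupoid} for the conditional negative definiteness, while you spell out conditions (1)--(3) explicitly, and both arguments establish properness via the same key observation that for $y=gn'$ with $g\in S$, $n'\in N'$, the product $yn$ stays in the same coset, giving the uniform lower bound $\phi'_{N'}(y,n)\geq \rho^{N'}_{-}(|n|)^2$. One minor remark: for condition (3), after the pointwise inequality on $Y'_{N'}$ you should integrate once against $m$ (using the change of variables $y\mapsto yg_i$ to rewrite each $\phi_{N'}(mg_i,g_i^{-1}g_j)$ as an integral $\int \phi'_{N'}(yg_i,g_i^{-1}g_j)\,dm(y)$), rather than ``against $m\cdot g_i$'' separately.
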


\begin{proof}
By Proposition \ref{prop_proper_CND_on_groupoid}, we have that $\phi_{N'}$ is a continuous conditionally negative definite functionon the transformation groupoid $Y_{N'}\rtimes N'$.

By the definition of the map $\phi_{N'}: G \times N' \to \mathbb{R}$, we have $\rho^{N'}_-(|n|)\leq \phi_{N'} (g, n)$ for all $g \in G$, $n \in N'$. It follows that the extension $\phi_{N'}: Y_{N'}\rtimes N' \to \mathbb{R}$ satisfies the inequality $\rho^{N'}_-(|n|)\leq \phi'(y, n)$ for all $y \in Y'_{N'}$, $n \in N'$. It follows that the conditionally negative definite function $\phi_{N'}: Y_{N'}\rtimes N' \to \mathbb{R}$ is proper.
\end{proof}

\begin{rem}
For each subgroup $N'\leq G$, we can find a compact space $Y_{N'}$ with a right $G$-action, such that the groupoid $Y_{N'} \rtimes N'$ is a-T-menable in the sense that $Y_{N'}\rtimes N'$ admits a proper, continuous conditionally negative definite function.
\end{rem}

\begin{lem}
  For each finite subgroup $G_0 \leq G$, the compact space $Y_{N'}$ is $G_0$-contractible.
\end{lem}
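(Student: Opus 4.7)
The plan is to exploit the fact that $Y_{N'}$ is (by construction) the space of probability measures on $Y'_{N'}$, which is naturally a convex subset of the dual space $C(Y'_{N'})^*$ with its weak-$*$ topology, and the right $G$-action on $Y_{N'}$ is affine. Since $G_0$ is finite, I can produce a $G_0$-fixed point and then contract linearly to it.

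First I would fix any base measure $m_1\in Y_{N'}$ (for instance the Dirac mass at some point of $Y'_{N'}$) and form the average
\[
m_0 \;:=\; \frac{1}{|G_0|}\sum_{g\in G_0} m_1\cdot g .
\]
Since the $G$-action on $Y_{N'}$ is induced from the (affine) $G$-action on $\ell^\infty(G)\supset C(Y'_{N'})$, the element $m_0$ lies in $Y_{N'}$ and is $G_0$-invariant: $m_0\cdot g = m_0$ for every $g\in G_0$.

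Next I would define a homotopy
\[
H : [0,1]\times Y_{N'} \longrightarrow Y_{N'}, \qquad H(t,m) \;=\; (1-t)\,m + t\,m_0 .
\]
Because $Y_{N'}$ is convex, $H(t,m)$ remains a probability measure; because the action on measures is affine and $m_0$ is $G_0$-fixed, we have
\[
H(t,m\cdot g) \;=\; (1-t)(m\cdot g) + t\,m_0 \;=\; \bigl((1-t)m+ t\,m_0\bigr)\cdot g \;=\; H(t,m)\cdot g
\]
for every $g\in G_0$, so $H(t,\cdot)$ is $G_0$-equivariant for every $t$. Joint continuity in the weak-$*$ topology is immediate, since for each $f\in C(Y'_{N'})$ the function $(t,m)\mapsto \int f\,d H(t,m) = (1-t)\int f\,dm + t\int f\,dm_0$ is continuous. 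Finally $H(0,\cdot)=\operatorname{id}_{Y_{N'}}$ and $H(1,\cdot)$ is the constant map at $m_0$, so $Y_{N'}$ is $G_0$-contractible.

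The only step requiring care is that the straight-line homotopy genuinely lives in $Y_{N'}$ and is compatible with the right $G$-action; both follow directly from convexity of the space of probability measures and the affine nature of the action. Beyond that the argument is essentially bookkeeping, so I do not expect a substantive obstacle.
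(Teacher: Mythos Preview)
Your argument is correct and is essentially the same as the paper's: both use convexity of the space of probability measures to average over $G_0$ and obtain a fixed point, then contract linearly to it. Your write-up is in fact more careful than the paper's (which omits the explicit verification of equivariance and continuity of the straight-line homotopy), but the underlying idea is identical.
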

\begin{proof}
Since $Y_{N'}$ is a convex set, it contracts to a point $y_0 \in Y_{N'}$. For any finite subgroup $G_0 \leq G$, it is obvious that $\sum_{g \in G_0} \frac{1}{|G_0|} y_0g \in Y_{N'}$. It follows that $Y_{N'}$ is $G_0$-contractible to the point $\sum_{g \in G_0} \frac{1}{|G_0|} y_0g$.
\end{proof}

Let $\mathcal{F}$ be the set of all subgroups of $G$ containing $N$ with finite index. For each $N' \in \mathcal{F}$, we can find a compact space $Y_{N'}$. We then define a compact topological space
$$
Y:= \prod_{N'\in \mathcal{F}} Y_{N'}.
$$
The topology on $Y$ is the product topology, and the $G$-action is then defined by
$$\displaystyle g\cdot \left(y_{\tiny N'}\right)_{N' \in \mathcal{F}}=\left( y_{\text{\tiny $N'$}} g^{-1}\right)_{N' \in \mathcal{F}}$$
for all $g \in G$, $\left(y_{N'}\right)_{N' \in \mathcal{F}}\in Y$.

\begin{prop}\label{prop_Haag_subgrpd}
For each $N'_0 \in \mathcal{F}$, the associated transformation groupoid $Y\rtimes N'_0$ is a-T-menable.
\end{prop}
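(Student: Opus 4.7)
The plan is to show that a-T-menability of $Y_{N'_0}\rtimes N'_0$ passes to $Y\rtimes N'_0$ by pulling back the negative-type function along the projection onto the $N'_0$-coordinate. The intuition is that, although the factor groupoids $Y_{N'}\rtimes N'_0$ for $N'\ne N'_0$ do not come with any obvious proper negative-type function for the $N'_0$-action, we do not need them to: the $N'_0$-component alone already carries a proper conditionally negative definite function, and the product merely enlarges the base space without affecting the group variable.

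Concretely, I would first consider the coordinate projection $\pi_{N'_0}:Y\to Y_{N'_0}$, $(y_{N'})_{N'\in\mathcal{F}}\mapsto y_{N'_0}$. It is continuous by definition of the product topology, surjective, and $G$-equivariant with respect to the given $G$-action on $Y$, since the action is defined coordinate-wise. In particular it is $N'_0$-equivariant. Next, using the proper continuous conditionally negative definite function $\phi_{N'_0}:Y_{N'_0}\rtimes N'_0\to\mathbb{R}$ produced earlier, I would define
$$
\Phi:Y\rtimes N'_0\to \mathbb{R},\qquad \Phi(y,n)=\phi_{N'_0}\bigl(\pi_{N'_0}(y),n\bigr).
$$
The remaining work is to check that $\Phi$ is a proper continuous conditionally negative definite function on $Y\rtimes N'_0$.

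Continuity of $\Phi$ is immediate from continuity of $\pi_{N'_0}$ and $\phi_{N'_0}$. The three axioms of negative type in Definition \ref{CND} transfer directly: axiom (1) $\Phi(y,e)=\phi_{N'_0}(\pi_{N'_0}(y),e)=0$; axiom (2) follows from the $N'_0$-equivariance of $\pi_{N'_0}$ via
$$
\Phi(yn,n^{-1})=\phi_{N'_0}\bigl(\pi_{N'_0}(y)\,n,\,n^{-1}\bigr)=\phi_{N'_0}\bigl(\pi_{N'_0}(y),n\bigr)=\Phi(y,n);
$$
and axiom (3) follows from the corresponding inequality for $\phi_{N'_0}$ applied at the point $\pi_{N'_0}(y)\in Y_{N'_0}$, again using $N'_0$-equivariance to identify $\pi_{N'_0}(yg_i)$ with $\pi_{N'_0}(y)g_i$.

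For properness, one has to show that for every $R>0$ the set
$$
\bigl\{n\in N'_0: \exists\,y\in Y\text{ with }|\Phi(y,n)|\le R\bigr\}
$$
is finite. Since $\pi_{N'_0}$ is surjective, this set coincides with $\{n\in N'_0:\exists\,x\in Y_{N'_0},\ |\phi_{N'_0}(x,n)|\le R\}$, which is finite by properness of $\phi_{N'_0}$. This is the only place where surjectivity of $\pi_{N'_0}$ is used and is not an obstacle. There is essentially no hard step; the main thing to keep straight is the bookkeeping between the left/right $G$-action on $Y$ and the right $N'_0$-action used in the groupoid structure, but once one checks that $\pi_{N'_0}$ intertwines them the argument is routine.
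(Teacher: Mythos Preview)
Your proposal is correct and follows exactly the same approach as the paper: the paper defines $\phi\bigl((y_{N'})_{N'\in\mathcal F},n\bigr)=\phi_{N'_0}(y_{N'_0},n)$, which is precisely your $\Phi(y,n)=\phi_{N'_0}(\pi_{N'_0}(y),n)$, and then simply asserts that it is easy to check this is a proper conditionally negative definite function. You have in fact supplied more detail than the paper does, verifying each axiom and properness explicitly.
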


\begin{proof}
Define a continuous conditionally negative definite function on the transformation groupoid $Y \rtimes N'_0$ by
$$\phi \left(\left(y_{N'}\right)_{N' \in \mathcal{F}}, n\right)=\phi_{\text{\tiny $N'_0$}}(y_{\text{\tiny $N'_0$}}, n),
$$
for all $n \in N'_0$, $\left(y_{N'}\right)_{N' \in \mathcal{F}} \in Y$.
It is easy to check that this map is a proper conditionally negative definite function.
\end{proof}
\begin{rem}
  The domain of the conditionally negative definite function is $Y \rtimes N_0'$ instead of $Y \rtimes G$ for each $N_0' \in \mathcal{F}$.
\end{rem}
Assume the quotient group $G/N$ coarsely embeds into a Hilbert space. In Section 3, we obtained a compact metrizable space $X$ such that $X \rtimes G/N$ is a-T-menable, and a proper $G/N$-$C^*$-algebra, denoted by $\mathcal{A}(X, \mathcal{H})$. In the rest of this section, we will formulate the twisted Baum--Connes conjecture for $G$ with coefficients in $C(Y)\widehat{\otimes} \mathcal{A}(X, \mathcal{H})\widehat{\otimes}B$.

For each $d>0$, let $P_d(G)$ be the Rips complex endowed with the spherical metric as defined in Section 2. Take a countable dense subset $Z_d \subset P_d(G)$, such that $Z_d \subset Z_d'$ whenever $d < d'$.
 Let $\big(\mathcal{H}_x\big)_{x \in X}$ be the continuous field of Hilbert spaces such that the transformation groupoid $X \rtimes G/N$ acts properly on $\big(\mathcal{H}_x\big)_{x \in X}$ by affine isometries. For every $(x, g) \in X \rtimes G/N$, there is an affine isometry $V_{(x, g)}: \mathcal{H}_{xg} \to \mathcal{H}_x$ and a continuous section $b: X \times G/N \to \mathcal{H}$ with $b(x, g) \in \mathcal{H}_x$, such that for every $v \in \mathcal{H}_{(x, g)}$, $V_{(x, g)}(v)=U_{(x, g)}(v)+b(x, g)$, for all $(x, g) \in X \rtimes G/N$, where  $U_{(x, g)}: \mathcal{H}_{xg}\to \mathcal{H}_x$ is a linear isometry for each $(x, g) \in X \rtimes G/N$. The map $b: G/N \to \Theta(X, \mathcal{H})$ is called the cocycle associated with the groupoid action of $X \rtimes G/N$ on the continuous field of Hilbert spaces $\big(\mathcal{H}_x\big)_{x \in X}$. By the construction of the continuous field of Hilbert spaces, $b(x, e)=0 \in \mathcal{H}_x$, for all $x \in X$. By coarse embeddability and the definition of $b$, we have that $\inf_{x \in X}\|b(x, g)\|_{\mathcal{H}_x} \to \infty$ as $|g| \to \infty$.

Let $\Theta(X, \mathcal{H})$ be the space of all continuous sections associated with the continuous field of Hilbert spaces $\big(\mathcal{H}_x\big)_{x \in X}$.
We will define a second countable, locally compact topological space $W$, and a proper $G/N$-action on $W$.
As a set, denote {\bf $W=\mathbb{R}_+ \times \bigsqcup_{x \in X} \mathcal{H}_x $}, where $\mathbb{R}_+$ is the set of all non-negative numbers. A topology can be defined as follows. Let $\left\{(t_i, x_i, v_i)\right\}_i$ be a net in $W$, it converges to a point $(t, x, v) \in W$ if
\begin{enumerate}
\item $x_i \to x$, and $t_i^2+\|v_i\|_{\mathcal{H}_x}^2 \to t^2+\|v\|^2_{\mathcal{H}_x}$;

\item for any continuous section $e:X \to \mathcal{H}$, we have $\langle e(x_i), v_i \rangle_{\mathcal{H}_{x_i}}\to \langle e(x), v\rangle_{\mathcal{H}_x}$, as $i \to \infty$.
\end{enumerate}

The topology on $W$ can also be characterized in terms of its base consisting of the following open sets.
For each point $(t_0, x_0, v_0) \in \mathbb{R}_+ \times \mathcal{H}$, each $U_{x_0} \subset Y$ a neighborhood of $x_0$, each constant $\epsilon>0$, each section $s \in \Theta(X, \mathcal{H})$, define open sets

 $$\left\{(t, x, v): |(t^2-t_0^2)+(\|v\|^2_{\mathcal{H}_x}-\|v_0\|^2_{\mathcal{H}_{x_0}})|< \epsilon, x \in U_{x_0}\right\},$$
 and
$$\left\{(t, x, v): |\langle s(x), v-v_0\rangle_{\mathcal{H}_x}| <\epsilon, x \in U_{x_0} \right\}.$$
The topology on $W$ is generated by sets of the above forms. The space $W$ is a second countable, locally compact and Hausdorff space. By the construction of the space $X$, it is obvious that $X$ is second countable and separable. We obtain a countable basis for the topology on $W$ by taking $\epsilon$ and $t_0$ in the rational numbers $\mathbb{Q}$, $x_0$ in a countable dense subset of $X$, and $v_0$ in countable dense subset in each fiber $\mathcal{H}_x$ in the definition of the above open subsets. As a consequence, the space $W$ is second countable. For local compactness, for each $R>0$, the subset
$$\left\{(t, x, v)\in W: t^2+\|v\|^2_{\mathcal{H}_x}\leq R^2\right\}$$
 is compact. To see this, we first choose a net
 $$\{(t_i, x_i, v_i)\}_{i \in I} \in\left\{(t, x, v)\subset W: t^2+\|v\|^2_{\mathcal{H}_x}\leq R^2\right\}.$$
 Since $\{t_i\}_i$ is bounded, a convergent subnet exists. Without loss of generality, we assume $\{t_i\}_i$ converges to $t_0$. We can also assume $\{x_i\}_i$ converges to $x_0$ due to the compactness of the space $X$. Fix an orthonormal basis $\{e_n\}_{n\geq 0}$ for the fiber $\mathcal{H}_{x_0}$, one can find a sequence of continuous sections $\{e_n \in \Theta(X, \mathcal{H})\}$ such that $e_n(x_0)=e_n$, for all $n \geq 0$. By the diagonal argument, we can find a subnet $\{(t_i, x_i, v_i)\}_i$ such that, $\lim_{i} \langle e_n(x_i), v_i\rangle_{\mathcal{H}_{x_i}}$ exists for each $n \geq 0$. Let $\lambda^i_n=\langle e_n(x_i), v_i\rangle_{\mathcal{H}_{x_i}}$, and $\lambda_n= \lim_{i} \lambda^i_n$. It is easy to check that $\{(t_i, x_i, v_i)\}_i$ converges to the point $(t_0, x_0, v_0)$ under the above topology, where $v_0=\sum_{n=1}^{\infty} \lambda_n e_n$.

Obviously, $\left\{(t, x, v) \in \mathbb{R}_+ \times \mathcal{H}: |t|^2+\|v-s(x)\|_{\mathcal{H}_x}^2< R, x \in U_{x_0}\right\}$ is an open subset of the space $W=\mathbb{R}_+ \times \mathcal{H}$, where $R$ is a positive constant, $U_{x_0}$ is an open neighborhood of $x_0$ in $X$, and $s: U_{x_0} \to \mathcal{H}$ is a local continuous section over $U_{x_0}$.

Now, let us define the $G/N$-action on $W$. Let $ g \in G/N$, $(t, x, v) \in W$, we define
$$g \cdot (t, x, v)=(t, xg^{-1}, V_{(xg^{-1}, g)}(v)).$$
For every $g \in G/N$, every continuous section $s \in \Theta(X, \mathcal{H})$, it follows that $g \cdot s$ is also a continuous section. In addition, every function on $W$ of the form $(t, x, v)\mapsto t^2+ \|v-v_0\|_{\mathcal{H}_x}^2$ is continuous under the topology of $W$, where $x \in X$, $v, v_0 \in \mathcal{H}_{x}$. As a result, the action of $G/N$ on $W$ is well-defined.

 According to \cite{BaCoHig}, the properness of the $G/N$-action on $W$ is equivalent to the fact that the set $\left\{g \in G/N: g\cdot K \cap K \neq \emptyset\right\}$ is finite for each compact subset $K \subset W$.
\begin{prop}\label{prop_Action_on_W}
The action $G/N \curvearrowright W$ is proper.
\end{prop}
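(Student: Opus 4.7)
The plan is to reduce properness of the action $G/N \curvearrowright W$ directly to the properness of the affine isometric action of the transformation groupoid $X \rtimes G/N$ on the continuous field $\bigl( \mathcal{H}_x \bigr)_{x \in X}$, which was built into the construction (using Proposition 3.6 applied with $\Gamma = G/N$, and the fact that the cocycle $b$ satisfies $\inf_{x} \|b(x,g)\|_{\mathcal{H}_x} \to \infty$ as $|g| \to \infty$).

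First I would verify that the function $\Phi: W \to \mathbb{R}_+$ defined by $\Phi(t,x,v) = t^2 + \|v\|^2_{\mathcal{H}_x}$ is continuous on $W$. This is immediate from the first condition in the definition of convergence of nets in $W$. Consequently, for any compact subset $K \subset W$, there is some $R > 0$ with
$$
K \subset B_R := \bigl\{(t,x,v) \in W : t^2 + \|v\|^2_{\mathcal{H}_x} \leq R^2 \bigr\}.
$$
(Earlier in the text it was shown that $B_R$ is in fact compact; here we only need the elementary direction, that compact sets lie in some $B_R$.)

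Next, suppose $g \in G/N$ satisfies $g \cdot K \cap K \neq \emptyset$. Then there exists $(t, x, v) \in K$ with $g \cdot (t, x, v) = (t, xg^{-1}, V_{(xg^{-1}, g)}(v)) \in K$. Setting $y = xg^{-1}$, this gives simultaneously $v \in B(yg, R) \subset \mathcal{H}_{yg}$ and $V_{(y, g)}(v) \in B(y, R) \subset \mathcal{H}_{y}$, so that
$$
V_{(y,g)}\bigl( B(yg, R) \bigr) \cap B(y, R) \neq \emptyset.
$$
Therefore
$$
\bigl\{ g \in G/N : g \cdot K \cap K \neq \emptyset \bigr\} \subset \bigl\{ g \in G/N : \exists\, y \in X,\ V_{(y,g)}(B(yg, R)) \cap B(y,R) \neq \emptyset \bigr\},
$$
and by the properness of the affine isometric action of $X \rtimes G/N$ on $\bigl( \mathcal{H}_x \bigr)_{x \in X}$, the right-hand side is finite. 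This proves the proposition.

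I do not anticipate any serious obstacle: the only subtlety is making sure that continuity of $\Phi$ (and hence boundedness of $\Phi$ on compact sets) really does follow from the first clause in the definition of the topology on $W$ — but this is built into that definition. The rest is bookkeeping, translating the definition of the $G/N$-action through the cocycle formula $V_{(x,g)}(v) = U_{(x,g)}(v) + b(x,g)$ and invoking the previously established properness of the groupoid action.
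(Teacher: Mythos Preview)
Your proof is correct and follows essentially the same approach as the paper: both reduce to the sets $B_R = \{(t,x,v) : t^2+\|v\|^2 \le R^2\}$ and then show that only finitely many $g$ can move $B_R$ back into itself. The only cosmetic difference is that the paper works directly with the cocycle estimate $\inf_{x}\|b(x,g)\|_{\mathcal{H}_x}\to\infty$ together with the fact that $U_{(x,g)}$ is a unitary, whereas you package the same computation as an appeal to the already-established properness of the affine isometric action of $X\rtimes G/N$ on $(\mathcal{H}_x)_{x\in X}$; these are the same argument.
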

\begin{proof}
For each positive integer $n>0$, let $K_n=\left\{(t, x, v) \in W: t^2+\|v\|^2_{\mathcal{H}_x}\leq n \right\}$. Since $K_n$ is compact for each $n>0$, and $W=\bigcup_{n>0} K_n$, it suffices to show that the set $\left\{g \in G/N: g\cdot K_n \cap K_n \neq \emptyset\right\}$ is finite for each $n>0$. Let $(t, x, v) \in K_n$, $g \cdot (t, x, v)=(t, xg^{-1}, V_{(xg^{-1}, g)}(v))$ for all $g \in G/N$. Since $V_{(xg^{-1}, g)}(v)=U_{(xg^{-1}, g)}(v)+ b(xg^{-1}, g)$ and $\inf_{x \in X}\|b(x, g)\|_{\mathcal{H}_x} \to \infty$ as $|g| \to \infty$, there exists some $R>0$, such that
$$\inf_{x \in X}\|b(xg^{-1}, g)\|^2>n+\sup_{x \in X}\|v\|^2_{\mathcal{H}_{x}}$$
for all $|g|>R$. Since $U_{(xg^{-1}, g)}$ is an isometry for each $(xg^{-1}, g) \in X \rtimes G/N$, so $g\cdot (t, x, v) \notin K_n$. By the properness of the metric on $G/N$, the set $\left\{g \in G/N: g\cdot K_n \cap K_n \neq \emptyset\right\}$ is finite for each $n>0$. Thus, the $G/N$-action on $W$ is proper.
\end{proof}

Note that the $C^*$-algebra $C_0(W)$ of all continuous functions on $W$ vanishing at infinity is contained in the center of the $C^*$-algebra $\mathcal{A}(X, \mathcal{H})$, see \cite{Tu_BC} for more details. For each open subset $U \subset  X$ and a continuous affine distribution $\left\{E_a(y)\right\}_{y \in U}$ over $U$, we have that
$$\mathcal{A}(U, E_a)=C_0(W_U)\cdot \mathcal{A}(U, E_a)$$
where $E_a=\bigsqcup_{y \in U} E_a(y)$ and $W_{(U, E_a)}:=\left\{(t, y, v): t \in \mathbb{R}_+, y \in U, v \in E^0_a(y)\right\} \subset W$. If $U\subset V$ are open subset of $X$, and $\left\{E_a(y)\right\}_{y \in U}$ and $\left\{E_b(y)\right\}_{y \in V}$ are continuous affine distributions over $U$ and $V$ respectively, with $E_a(y) \subset E_b(y)$ for each $y \in U$, then the fiber-wise defined Bott map $\beta_{U, ba}$ takes $C(W_{(U, E_a)})$ into $C_0(W_{(V, E_b)})$. Accordingly, the $C^*$-algebra $C_0(W)$ can be viewed as a direct limit $\varinjlim C_0(W_{U, E_a})$. As a result, $C_0(W)\cdot \mathcal{A}(X, \mathcal{H})$ is dense in $\mathcal{A}(X, \mathcal{H})$. We have an action of $G/N$ on the $C^*$-algebra $\mathcal{A}(X, \mathcal{H})$ as defined in Section 3. The $C^*$-algebra $C_0(W)$ is contained in the center of the $C^*$-algebra, and the properness of the $G/N$-action on $W$ implies that the action of $G/N$ on $\mathcal{A}(X, \mathcal{H})$ is proper. In the rest of this section, we lift $G/N$-actions on $W$ and $\mathcal{A}(X, \mathcal{H})$ to $G$-actions via the quotient map $G \to G/N$, and we use the same notations for $G$ and $G/N$ actions.

\subsection{Twisted Roe algebras and twisted localization algebras} In the rest of this section, we will define the twisted version of Roe algebras and localization algebras. Let $d>0$, and let $P_d(G)$ be the simplical complex at scale $d$, endowed with the simplicial metric. Let $Z_d$ be the countable dense $G$-invariant subset of $P_d(G)$ consisting of all linear combinations $\sum_{g \in G} c_g g$ with $c_g \in \mathbb{Q}$ and $c_g\neq c_g'$ for any pair $g\neq g'$. Note that for each subcomplex $C \subset P_d(G)$, the intersection $C \cap Z_d$ is non-empty and $C=\widebar{C \cap Z_d}$.

Since the left translation action $G \curvearrowright P_d(G)$ is proper and cocompact, one can define a coarse $G$-equivariant map $J:P_d(G) \rightarrow G$. This map is defined as follows. For each $d>0$, we can fix a bounded subset $\Delta_d \subset Z_d $ such that
 \begin{enumerate}
 \item $G\cdot \Delta_d= Z_d$,
 \item for every $z \in Z_d$, there exist unique $x \in \Delta_d$ and unique $g \in G$, such that $z=g\cdot x$,
 \item for $d<d'$, $\Delta_d \subset \Delta_{d'}$.
 \end{enumerate}

By condition (2), for every element $z \in Z_d$, there exists a unique element $g \in G$, such that $g^{-1}z \in \Delta_d$. The map $J:Z_d(G) \to G$ can be defined as $J(z)=g$, where $z \in Z_d$, and $z= gx$, for some $g \in G$ and $x \in \Delta_d$. Note that the map $J$ is $G$-equivariant. Indeed, for all $\gamma \in G$, $z \in Z_d$, we have $J(z)^{-1}z \in \Delta_d$, so $(\gamma\cdot J(z))^{-1}(\gamma z)=J(z)^{-1}z \in \Delta_d$. Hence $J(\gamma z)=\gamma J(z)$.

Define an open set
{\bf $$
O_R(g)=\left\{ (t, x, v): t^2+\|v - b(x, g)\|_{\mathcal{H}_x}^2 < R^2\right\} \subset W.
$$}
for all $g \in G$.
We need the following lemma to define twisted Roe algebras.
\begin{lem}\label{lem_O_R_invar}
If $z \in Z_d$, $\gamma \in G$, then $\gamma\cdot O_R(J(z))=O_R(J(\gamma z))$.
\end{lem}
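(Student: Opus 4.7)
The plan is to verify the identity by direct computation, reducing it via the equivariance of $J$ to the statement that $\gamma \cdot O_R(g) = O_R(\gamma g)$ for every $g \in G$, and then checking this last identity using the cocycle relation satisfied by $b$ together with the fact that the linear part $U_{(x,g)}$ of the affine isometry $V_{(x,g)}$ is a unitary.

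First, since $J$ was shown just before the lemma to satisfy $J(\gamma z) = \gamma J(z)$, the assertion $\gamma \cdot O_R(J(z)) = O_R(J(\gamma z))$ becomes $\gamma \cdot O_R(g) = O_R(\gamma g)$ with $g = J(z)$. To prove the inclusion $\gamma \cdot O_R(g) \subseteq O_R(\gamma g)$, take $(t, x, v) \in O_R(g)$, so that $t^2 + \|v - b(x, g)\|_{\mathcal{H}_x}^2 < R^2$. By the definition of the $G$-action on $W$ from Proposition \ref{prop_Action_on_W}, we have $\gamma \cdot (t, x, v) = (t, x\gamma^{-1}, V_{(x\gamma^{-1}, \gamma)}(v))$, and we must check that this triple lies in $O_R(\gamma g)$.

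The key computation is an application of the cocycle identity $b(x, gh) = U_{(x, g)}(b(xg, h)) + b(x, g)$ with $(x, g, h)$ replaced by $(x\gamma^{-1}, \gamma, g)$, which yields
\[
b(x\gamma^{-1}, \gamma g) \;=\; U_{(x\gamma^{-1}, \gamma)}\bigl(b(x, g)\bigr) + b(x\gamma^{-1}, \gamma).
\]
Subtracting this from $V_{(x\gamma^{-1}, \gamma)}(v) = U_{(x\gamma^{-1}, \gamma)}(v) + b(x\gamma^{-1}, \gamma)$, the translation terms cancel and we obtain
\[
V_{(x\gamma^{-1}, \gamma)}(v) - b(x\gamma^{-1}, \gamma g) \;=\; U_{(x\gamma^{-1}, \gamma)}\bigl(v - b(x, g)\bigr).
\]
Since $U_{(x\gamma^{-1}, \gamma)}$ is a linear isometry from $\mathcal{H}_x$ to $\mathcal{H}_{x\gamma^{-1}}$, the norm is preserved, so
\[
t^2 + \bigl\|V_{(x\gamma^{-1}, \gamma)}(v) - b(x\gamma^{-1}, \gamma g)\bigr\|_{\mathcal{H}_{x\gamma^{-1}}}^2 \;=\; t^2 + \|v - b(x, g)\|_{\mathcal{H}_x}^2 \;<\; R^2,
\]
which is exactly the membership condition $\gamma\cdot(t,x,v) \in O_R(\gamma g)$.

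The reverse inclusion $O_R(\gamma g) \subseteq \gamma \cdot O_R(g)$ is then immediate by applying the inclusion just proved with $\gamma$ replaced by $\gamma^{-1}$ and $g$ replaced by $\gamma g$, giving $\gamma^{-1} \cdot O_R(\gamma g) \subseteq O_R(g)$, hence $O_R(\gamma g) \subseteq \gamma \cdot O_R(g)$. There is no real obstacle here; the lemma is a bookkeeping check, and the only substantive input beyond unpacking definitions is the cocycle identity, which is built into any affine isometric action.
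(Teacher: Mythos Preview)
Your proof is correct and follows essentially the same approach as the paper: both arguments use the cocycle identity to show $b(x\gamma^{-1},\gamma g)=V_{(x\gamma^{-1},\gamma)}\bigl(b(x,g)\bigr)$ (equivalently, your formulation via $U$) and then conclude by the isometry property. If anything, your write-up is slightly more careful, since you explicitly invoke the equivariance $J(\gamma z)=\gamma J(z)$ at the start and handle the reverse inclusion, which the paper leaves implicit.
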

\begin{proof}
For any $(t, x, v) \in O_R(z)$, the action is given by $\gamma \cdot(t, x, v)=(t, x \gamma^{-1}, V_{(x\gamma^{-1}, \gamma)} (v))$, and $b(x\gamma^{-1}, J(\gamma z))=b((x\gamma^{-1}, \gamma)(x, J(z)))=V_{(x\gamma^{-1}, \gamma)}b(x, J(z))$.
Since $V_{(x\gamma^{-1}, \gamma)}$ is an isometry for each $x \in X$, we have that $t^2+\|V_{(x\gamma^{-1}, \gamma)} ( v)-b(x\gamma^{-1}, J(\gamma z))\|^2 < R^2$, for all $\gamma \in G$.
As a result, $\gamma \cdot (t, x, v) \in O_R(\gamma z)$.
\end{proof}

Let $H_0$ be any fixed separable complex Hilbert space with infinite dimension. Let $K_G$ be the algebra of all compact operators on $H_0\widehat{\otimes} \ell^2(G)$, and $B$ a $G$-$C^*$-algebra.  Define a $G$-action on $H_0 \widehat{\otimes}\ell^2(G)$ by first defining $\gamma(v \widehat{\otimes}\delta_g)=v\widehat{\otimes}\delta_{\gamma g}$, for all $\gamma, g \in G$ and $v \in H_0$, and then extending linearly to $H_0\widehat{\otimes} \ell^2(G)$. The algebra $B \widehat{\otimes}K_G$ is equipped with a unitary $G$-action by $\gamma \cdot (b \widehat{\otimes}T)= \gamma \cdot b  \widehat{\otimes}\gamma T \gamma ^*$, for all $b \in B$, $T \in K_G$, $\gamma \in G$.

\begin{defn}
For an element $S \in C(Y)\widehat{\otimes} \mathcal{A}(X, \mathcal{H}) \widehat{\otimes}B \widehat{\otimes}K_G$, we can define the {\bf support} of $S$, denoted by $\text{supp}(S)$, to be the complement of the set of $(t, x, v) \in \mathbb{R}_+ \times \mathcal{H}$ such that there exists $f \in C_0(\mathbb{R}_+ \times \mathcal{H})$ with $f(t, x, v) \neq 0$,  $(1_Y\widehat{\otimes} f \widehat{\otimes}k)\cdot S=0$, for all $k \in B \widehat{\otimes}K_G$.
\end{defn}

\begin{defn}
Define \textbf{the algebraic twisted Roe algebra}, denoted by $$C^{\ast}_{alg}(P_d(G), C(Y)\widehat{\otimes}\mathcal{A}(X, \mathcal{H})\widehat{\otimes}B)^G,$$
as the set of all the functions $T$ on $ Z_d \times Z_d$ such that
\begin{enumerate}
\item $T(y, z)\in C(Y) \widehat{\otimes}\mathcal{A}(X, \mathcal{H}) \widehat{\otimes} B \widehat{\otimes}K_G$,
\item there exists $M>0$ such that $\|T(y, z)\|\leq M$, for all $x,y \in Z_d$,
\item there exists $L>0$, such that
\begin{center}
$\# \{y:T(y, z)\neq0\}<L$ and $\#\{z:T(y, z)\neq 0\}<L$,
\end{center}

\item there exists $r_1 \geq 0$, such that $T(y, z)=0$, for any $y, z \in Z_d$ with $d(y, z)>r_1$,
\item  there exists $r_2>0$, such that $supp(T_{y, z})= O_{r_2}(J(y))$,

\item the operator $T$ is $G$-invariant, i.e., $\gamma(T_{\gamma^{-1}y, \gamma^{-1}z})=T_{y, z}$, for all $\gamma \in G$, $y, z \in Z_d$.
\end{enumerate}
The algebraic twisted Roe algebra is equipped with a $\ast$-algebra structure by the matrix operations.
\end{defn}

\begin{rem}
By Lemma \ref{lem_O_R_invar}, the above condition (4) makes sense.
\end{rem}
Let
$$E=\left\{\sum_{z \in Z_r} a_z [z]: a_z \in C(Y) \widehat{\otimes} \mathcal{A}(X, \mathcal{H})\widehat{\otimes} B \widehat{\otimes}K_G, \sum_{z \in Z_r}a_z^*a_z~\text{converges in norm}
\right\}.$$
It is a $G$-Hilbert module over $C(Y)\widehat{\otimes} \mathcal{A}(X, \mathcal{H})\widehat{\otimes} B \widehat{\otimes}K_G$.
For all $\sum_{z \in Z_r}a_z[z], \sum_{z \in Z_r}b_z[z] \in E$, $a \in C(Y)\widehat{\otimes} \mathcal{A}(X, \mathcal{H})\widehat{\otimes} K_G$
$$
\left\langle\sum_{z \in Z_r}a_z[z],\sum_{z \in Z_r}b_z[z]\right\rangle:=\sum_{z \in Z_r}a_z^*b_z,
$$
and
$$\left(\sum_{z \in Z_r}a_z[z]\right)a:=\sum_{z \in Z_r}a_z a[z].
$$
The action of $G$ is given by
$$g \cdot \left(\sum_{z \in Z_r} a_z [z]\right):=\sum_{z \in Z_r} (g\cdot a_z) [gz].$$
Define a $\ast$-representation of $C^{\ast}_{alg}(P_d(G), C(Y)\widehat{\otimes}\mathcal{A}(X, \mathcal{H}) \widehat{\otimes}B)^G$ on $E$ by
$$T\left(\sum_{z\in Z_r}a_z [z]\right):=\sum_{y \in Z_r}\left(\sum_{z\in Z_r}T_{y, z}a_z\right)[y]$$

According to the definition of the algebraic twisted Roe algebra, the $\ast$-representation on $E$ is well-defined. \textbf{The twisted Roe algebra}, denoted by {\bf $$C^{\ast}(P_d(G), C(Y) \widehat{\otimes} \mathcal{A}(X, \mathcal{H})\widehat{\otimes}B)^G,$$} is defined to be the completion of the algebraic twisted Roe algebra under the operator norm in $B(E)$, where $B(E)$ is the $C^*$-algebra of all adjointable module homomorphisms.

Let $C_{L,alg}^{\ast}(P_d(G), C(Y) \widehat{\otimes}\mathcal{A}(X, \mathcal{H})\widehat{\otimes}B)^G$ be the set of all bounded, uniformly norm-continuous functions
$$g:\mathbb{R}_+ \rightarrow C_{alg}^{\ast}(P_d(G), C(Y)\widehat{\otimes} \mathcal{A}(X, \mathcal{H})\widehat{\otimes}B)^G,$$
such that
\begin{center}
$\text{propagation}(g(t))\to 0$, as $t \to \infty$.
\end{center}
Taking the completion with respect to the norm
$$\|g\|=\sup_{t \in \mathbb{R}_+} \|g(t)\|,$$
we have \textbf{the twisted localization algebra}, denoted by $C^{\ast}_L(P_d(G), C(Y)\widehat{\otimes}\mathcal{A}(X, \mathcal{H})\widehat{\otimes}B)^G$.
\begin{rem}
  By Proposition \ref{Prop_Roe_coarse_invar} and Remark \ref{rem_Roe_independent_on_admissible_system}, we have that the twisted Roe algebras and the twisted localization algebras are independent of the choice of the countable dense subset $Z_d$ for each $d>0$. Indeed, if we have two countable dense $G$-invariant subset $Z_d, Z'_d \subset P_d(G)$, then $Z_d \cup Z'_d$ is also a countable dense $G$-invariant subset of $P_d(G)$. Following the constructions in Proposition \ref{Prop_Roe_coarse_invar} and Remark \ref{rem_Roe_independent_on_admissible_system}, the inclusion map $Z_d \to Z_d \cup Z'_d$ induces an isomorphisms between the twisted Roe algebras defined by choosing $Z_d$ and $Z_d \cup Z'_d$, while the inclusion map $Z'_d \to Z_d \cup Z'_d$ induces an isomorphism between the twisted Roe algebras defined by choosing $Z'_d$ and $Z_d \cup Z'_d$. As a result, the definitions of twisted Roe algebras and twisted localization algebras (by the similar argument) are independent of the choice of the countable dense $G$-invariant subset $Z_d$.
\end{rem}
There is a natural \textbf{evaluation-at-zero map}
$$ev:C_L^{\ast}(P_d(G), C(Y)\widehat{\otimes}\mathcal{A}(X, \mathcal{H})\widehat{\otimes}B)^G \rightarrow
C^{\ast}(P_d(G), C(Y) \widehat{\otimes}\mathcal{A}(X, \mathcal{H})\widehat{\otimes}B)^G,$$
defined by
$ev(g)=g(0)$, for all $g \in C_L^{\ast}(P_d(G), C(Y)\widehat{\otimes}\mathcal{A}(X, \mathcal{H})\widehat{\otimes}B)^G$. Obviously, it is a $*$-homomorphism. The evaluation-at-zero map induces a homomorphism on $K$-theory
$$
ev_*:K_*(C_L^{\ast}(P_d(G), C(Y)\widehat{\otimes}\mathcal{A}(X, \mathcal{H})\widehat{\otimes}B)^G) \rightarrow
K_*(C^{\ast}(P_d(G), C(Y) \widehat{\otimes}\mathcal{A}(X, \mathcal{H})\widehat{\otimes}B)^G).
$$

\section{The $K$-theory of twisted Roe algebras and twisted localization algebras}

In this section, we will prove the following twisted Baum--Connes conjecture for groups which are extensions of coarsely embeddable groups.

\begin{thm}\label{thm_twist_BC}
Let $1 \rightarrow N \rightarrow G\rightarrow G/N \rightarrow 1 $ be a short exact sequence of countable discrete groups. Assume $N$ and $G/N$ can be coarsely embedded into Hilbert spaces. The map
\begin{equation*}
ev_{\ast}: \displaystyle \lim_{d \to \infty}K_{\ast}(C_{L}^{\ast}(P_d(G),C(Y)\widehat{\otimes} \mathcal{A}(X, \mathcal{H})\widehat{\otimes}B)^G)
\rightarrow \;\;\;\;\;\;\;\;\;\;\;\;\;\;\;\;\;\;\;\;\;\;\;\;\;\;\;\;\;\;\;\;\;\;\;\;\;\;\;\;\;\;
\end{equation*}
\begin{equation*}
 \displaystyle \lim_{d \to \infty}K_{\ast}(C^{\ast}(P_d(G), C(Y)\widehat{\otimes}\mathcal{A}(X, \mathcal{H})\widehat{\otimes}B)^G)
\end{equation*}
induced by the evaluation-at-zero map is an isomorphism.
\end{thm}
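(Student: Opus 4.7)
The plan is to adapt Yu's twisted-algebra strategy from the coarse Baum--Connes conjecture for coarsely embeddable groups (\cite{Yu_coa_emb}) to the present extension setting, using a Mayer--Vietoris cutting procedure combined with the Bott periodicity isomorphism for continuous fields (Theorem~\ref{thm,eq_Bott_isom_fini_gro}). The central idea is that although the groupoid $X\rtimes G$ is not a-T-menable, the combined data of the $G/N$-equivariant continuous field on $X$ (controlling the $\mathcal{A}(X,\mathcal{H})$-factor) and the compact right $G$-space $Y$ (which makes $Y\rtimes N'$ a-T-menable for each finite-index $N'\supset N$) provide enough Bott--Dirac machinery to analyse each finite-stabilizer piece of $P_d(G)$ separately.

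First, I would stratify both $C^*(P_d(G),C(Y)\widehat{\otimes}\mathcal{A}(X,\mathcal{H})\widehat{\otimes}B)^G$ and its localization analogue by the scale $R$ appearing in the support condition $\mathrm{supp}(T_{y,z})=O_{r_2}(J(y))$. For each $R>0$, define closed $G$-invariant subalgebras consisting of those operators whose matrix entries are supported in $O_R(J(y))$; the full twisted algebras are the norm closures of the unions as $R\to\infty$, and since $K$-theory commutes with inductive limits, it suffices to work at a fixed scale $R$. By Proposition~\ref{prop_Action_on_W} the family $\{O_R(g)\}_{g\in G}$ is locally finite for each fixed $R$, so combining a Mayer--Vietoris sequence of the type in Proposition~\ref{prop_MV squence_Local} (applied to an equivariant decomposition of $P_d(G)$ into $G$-translates of simplices) with the Five Lemma applied to the naturality diagram for the evaluation-at-zero map, one reduces the problem to a single $G$-cocompact piece, namely the $G$-orbit of one simplex $\sigma\subset P_d(G)$ with finite stabilizer $F\leq G$. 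After a unitary conjugation built from the isometries $V_{(x,g)}$ that trivializes the $G$-action on the orbit, this piece of the twisted Roe algebra becomes $\ast$-isomorphic to a matrix algebra over $\bigl(C(Y)\widehat{\otimes}\mathcal{A}(X,\mathcal{H})|_{\mathrm{loc}}\widehat{\otimes}B\bigr)\rtimes F$, with the twist encoded by the cocycle $b$.

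For the single-orbit piece with finite stabilizer $F$, I would invoke Theorem~\ref{thm,eq_Bott_isom_fini_gro}: since $F$ is finite and projects to a finite subgroup of $G/N$, the fiber-wise Bott map and the asymptotic Dirac morphism $\alpha$ constructed in Section~3 are $F$-equivariant, and together they yield a Bott--Dirac isomorphism between the $K$-theory of the single-orbit piece of the twisted Roe algebra and the $K$-theory of the corresponding untwisted crossed product. The compatibility of $\alpha$ with evaluation-at-zero, built into its asymptotic-morphism construction, identifies the two sides of the evaluation diagram (via these Bott--Dirac isomorphisms) with the evaluation-at-zero map for untwisted Roe and localization algebras, for which it is an isomorphism by Proposition~\ref{thm_local_index_isom}. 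Reassembling via Mayer--Vietoris and passing to the direct limits in $R$ and $d$ then yields the theorem.

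The main obstacle I expect is the proper formulation of the Mayer--Vietoris decomposition and single-orbit reduction in the twisted setting: because the cocycle $b(x,g)$ moves the support regions $O_R(J(y))$ as $y$ varies, cutting the algebras compatibly with $G$-equivariance and with the Hilbert-bundle structure requires a careful choice of partitions of unity on $W$ that are subordinate to $\{O_R(g)\}_{g\in G}$ and equivariant under both $X\rtimes G/N$ and $Y\rtimes N'$. A secondary technical point is that the Bott--Dirac identification must commute with evaluation-at-zero at the $C^*$-algebra level, not just in $K$-theory; this should follow from the naturality of the asymptotic morphisms in Section~3, but requires careful bookkeeping across the various direct limits.
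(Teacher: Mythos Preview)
Your proposal has a fundamental gap in the Mayer--Vietoris step: you decompose $P_d(G)$ into $G$-orbits of simplices, but such a decomposition only yields a Mayer--Vietoris sequence on the \emph{localization} side (Proposition~\ref{prop_MV squence_Local}), not on the Roe-algebra side. The twisted Roe algebra $C^*(P_d(G),C(Y)\widehat{\otimes}\mathcal{A}(X,\mathcal{H})\widehat{\otimes}B)^G$ does not split into ideals indexed by simplices of $P_d(G)$, so there is no compatible six-term sequence to which you could apply the Five Lemma. Moreover, even on a single orbit $G\cdot\sigma\cong G/F$ the Roe algebra is still (stably) a crossed product by the full group $G$, not by the finite stabilizer $F$; the reduction you sketch only works for the localization side (compare Lemma~\ref{lem_localization_infinite_to_finite}). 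Consequently the evaluation map on a single orbit is the assembly map for $G$, not for $F$, and invoking Theorem~\ref{thm,eq_Bott_isom_fini_gro} to untwist $\mathcal{A}(X,\mathcal{H})$ back to $C(X)\widehat{\otimes}\mathcal{S}$ merely reduces you to the \emph{untwisted} assembly map for $G$ with coefficients in $C(Y)\widehat{\otimes}C(X)\widehat{\otimes}B$---which is exactly what Theorem~\ref{thm_twist_BC} is meant to help establish. The argument is therefore circular.

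The paper's proof avoids this by decomposing in the $W$-direction rather than in $P_d(G)$: the proper $G/N$-action on $W$ (Proposition~\ref{prop_Action_on_W}) gives, for each $R$, a finite cover $O_R=\bigcup_i G\cdot V_i$ with each $V_i$ invariant under some $N_i\leq G$ containing $N$ with finite index. Restricting supports to $G\cdot V_i$ produces genuine \emph{ideals} in both the twisted Roe and twisted localization algebras (Lemma~\ref{decomposeRoe}), so Mayer--Vietoris and the Five Lemma apply on both sides. The key reduction (Lemma~\ref{lem_reduce_to_subgroup}) identifies each piece with the ordinary (untwisted) Roe/localization algebras of $P_d(N_i)$ with coefficients in $C(Y)\widehat{\otimes}\mathcal{A}(X,\mathcal{H})_{V_i}\widehat{\otimes}B$. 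Since $Y\rtimes N_i$ is a-T-menable (Proposition~\ref{prop_Haag_subgrpd}), Tu's theorem (Theorem~\ref{Thm_Tu's}) gives the Baum--Connes isomorphism for $N_i$ with these coefficients, which is the missing input your argument never reaches. The crucial point you missed is that the relevant ``small'' subgroups are the infinite groups $N_i\supset N$, not the finite stabilizers $F$ of simplices; the role of $Y$ is precisely to make the groupoid $Y\rtimes N_i$ a-T-menable so that Tu's result applies.
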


 To prove this theorem, the main idea is to decompose the twisted Roe algebra into ideals whose $K$-theory can be easily computed, and then use the Mayer--Vietoris sequence and the Five Lemma to piece them all together. The decomposition of the twisted Roe algebra relies on the structure of the $G$-action on the space $W$ (Proposition \ref{prop_Action_on_W}). Let us recall an equivalent definition of proper action of a discrete group on a topological space (Definition 1.3 in \cite{BaCoHig}).

\begin{defn}
Let $\Gamma$ be a countable discrete group, $X$ a topological space equipped with a continuous action of $\Gamma$. The $\Gamma$-action is called proper if

\begin{enumerate}
\item $X$ is second countable,

\item $X/ \Gamma$ is second countable,

\item for every $x \in X$ there is a $\Gamma$-invariant neighborhood $U$ of $x$ and a finite subgroup $\Gamma_0$ of $\Gamma$, such that there exists a continuous $\Gamma$-map $U \to \Gamma/ \Gamma_0$.
\end{enumerate}
\end{defn}

In Section 4, we obtained a proper $G/N$-action on a topological space $W$ (Proposition \ref{prop_Action_on_W}) and lifted the $G/N$-action to a $G$-action on $W$ via the quotient map $G \to G/N$. For any $R>0$, let $O_R(g)=\left\{ (t, x, v): t^2+\|v - b(x, g)\|_{\mathcal{H}_x}^2 < R^2\right\} \subset W$. We have checked that $O_R=\bigcup_{g \in G/N} O_R(g)$ is $G/N$-invariant as is the closure of $O_R$  by Lemma \ref{lem_O_R_invar}. The restriction of the $G/N$-action on the closure $\widebar{O_R}$ is proper and cocompact, because $\widebar{O_R}=G/N \cdot \widebar{O_R(e)}$. Fix $R_0>0$, for any $R <R_0$, every point in $O_R$ has a $G/N$-invariant neighborhood identical to $V\times_{N'/N} G/N$, where $V$ is an open subset of $O_{R_0}(e)$, and $N'$ is a subgroup of $G$ containing $N$ with finite index. Throughout this paper, we identify sets of the form $V \times_{N'} G/N$ with $G/N \cdot V$ via $(v, g) \mapsto vg$, and $V \times \{g\}$ is identified with $g \cdot V$ for all $ g \in G/N$.
 Those open sets together with the difference $\widebar{O_{R_0}(e)}-\widebar{O_R(e)}$ comprise an open cover of $\widebar{O_{R_0}(e)}$. By compactness of $\widebar{O_{R_0}(e)}$, we can find a finite open cover $ \left\{ V_i \times_{N_i/N} G/N\right\}^k_{i=1}$ for $O_R$, where $V_i \subset \{(x, s, v): s^2+\|v\|_{H_x}^2< R^2\}$ and $V_i$ is $N_i$-invariant, for some $N_i$ a subgroup of $G$ containing $N$ with finite index. We will consider the restriction of the $G$-action on $W$ to an $N_i$-action on $V_i$. Let $G \cdot V_i$ denote the union $\bigsqcup_{g \in G/{N_i}} g \cdot V_i$.

\begin{defn}\leavevmode
\begin{enumerate}
\item For any element $T \in C^*(P_d(G), C(Y)\widehat{\otimes}\mathcal{A}(X, \mathcal{H})\widehat{\otimes}B)^G$,  the support of $T$ is defined to be the set
$$\text{supp}(T)=\left\{ (y, z, v) \in Z_d \times Z_d \times (\mathbb{R}_+\times \mathcal{H}): v \in \text{supp}(T_{y, z})\right\}.$$
\item For any element $a \in C_L^*(P_d(G), C(Y)\widehat{\otimes}\mathcal{A}(X, \mathcal{H})\widehat{\otimes}B)^G$, the support of $a$ is defined to be
$$\text{supp}(a)=\bigcup_{t\geq 0} \text{supp}(a(t)).$$
\end{enumerate}
\end{defn}

For each open subset $U \subset W=\mathbb{R}_+ \times \mathcal{H}$, define $C^{\ast}(P_d(G),C(Y) \widehat{\otimes}\mathcal{A}(X, \mathcal{H})\widehat{\otimes} B)_{U}^G$ to be the $C^*$-subalgebra of $C^{\ast}(P_d(G),C(Y) \widehat{\otimes}\mathcal{A}(X, \mathcal{H})\widehat{\otimes}B)^G$ generated by all operators with support contained in $Z_d \times Z_d \times U$.

We can also define the localization algebra, denoted by $C_L^{\ast}(P_d(G),C(Y)\widehat{\otimes}\mathcal{A}(X, \mathcal{H}\widehat{\otimes}B)_{U}^G$, to be the $C^*$-subalgebra of $C_L^{\ast}(P_d(G),C(Y)\widehat{\otimes}\mathcal{A}(X, \mathcal{H})\widehat{\otimes}B)^G$ generated by all paths $a(t)$ with support contained in $Z_d \times Z_d \times U$. We have an evaluation-at-zero map
$$ ev: C_L^{\ast}(P_d(G),C(Y)\widehat{\otimes}\mathcal{A}(X, \mathcal{H})\widehat{\otimes}B)_U^G \rightarrow C^*(P_d(G),C(Y) \widehat{\otimes}\mathcal{A}(X, \mathcal{H})\widehat{\otimes}B)_U^G$$
given by $ev(f)=f(0)$, where $f \in C_L^{\ast}(P_d(G),C(Y)\widehat{\otimes}\mathcal{A}(X, \mathcal{H})\widehat{\otimes}B)_U^G$.

\begin{prop}
Let $G \cdot V_i=\bigcup_{g \in G/{N_i}} g \cdot V_i$. The homomorphism,
\begin{equation*}
 ev_{\ast}: \displaystyle \lim_{d \to \infty}K_{\ast}(C_L^{\ast}(P_d(G),C(Y)\widehat{\otimes}\mathcal{A}(X, \mathcal{H})\widehat{\otimes}B)_{G \cdot V_i}^G) \rightarrow \;\;\;\;\;\;\;\;\;\;\;\;\;\;\;\;\;\;\;\;\;\;\;\;\;\;\;\;\;\;\;\;\;\;\;\;\;\;\;\;\;\;\;\;\;\;\;\;\;\;\;\;\;\;\;\;\;\;\;\;
  \end{equation*}
  \begin{equation*}
  \;\;\;\;\;\;\;\;\;\;\;\;\;\;\;\;\;\;\;\;\;\;\;\;\;\;\;\;\;\;\;\;\;\;\;\;\;\;\;\;\;\;\;\;\;\;\;\;\;\;\;\;\;\;\;\;\;\;\;\; \displaystyle \lim_{d \to \infty}K_{\ast}(C^*(P_d(G),C(Y) \widehat{\otimes}\mathcal{A}(X, \mathcal{H}\widehat{\otimes}B))_{G \cdot V_i}^G),
  \end{equation*}
induced by the evaluation-at-zero map  on $K$-theory is an isomorphism.
\end{prop}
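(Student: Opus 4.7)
The plan is to reduce this proposition to a groupoid-equivariant Bott periodicity statement for the a-T-menable groupoid $Y \rtimes N_i$, following the overall strategy of Tu and Yu. There are three main steps: an induction/Morita reduction from $G$ to $N_i$, a geometric simplification of $V_i$ using the homotopy invariance results already proved, and an application of the Higson--Kasparov--Trout--Tu Bott periodicity.

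First, I would identify both the twisted Roe algebra and the twisted localization algebra restricted to $G\cdot V_i$ with their $N_i$-equivariant analogues on a fundamental domain. Since $V_i$ is $N_i$-invariant and $G\cdot V_i=\bigsqcup_{g \in G/N_i} g\cdot V_i$, the $G$-equivariance of an operator $T$ whose support lies in $Z_d\times Z_d\times G\cdot V_i$ forces it to be determined by its matrix coefficients over a fundamental domain for the left $G/N_i$-action on $Z_d$, combined with the support condition (5) that ties $\mathrm{supp}(T_{y,z})$ to $O_{r_2}(J(y))$ and the compatibility in Lemma \ref{lem_O_R_invar}. A standard induction/Morita argument then produces canonical $\ast$-isomorphisms
$$
C^{\ast}(P_d(G),C(Y)\widehat{\otimes}\mathcal{A}(X,\mathcal{H})\widehat{\otimes}B)^G_{G\cdot V_i}\;\cong\;C^{\ast}(P_d(G),C(Y)\widehat{\otimes}A_{V_i}\widehat{\otimes}B)^{N_i},
$$
and the analogous identification for the localization algebras, where $A_{V_i}=C_0(V_i)\cdot\mathcal{A}(X,\mathcal{H})$. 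These identifications intertwine the two evaluation-at-zero maps.

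Second, since $V_i$ sits inside a slab $\{(t,x,v):t^2+\|v\|_{\mathcal{H}_x}^2<R^2\}$ and the $N_i/N$-action on $V_i$ factors through the finite group $N_i/N$ (because $N$ acts trivially on $W$), the coefficient $A_{V_i}$ is $N_i$-equivariantly homotopy-equivalent to a finite-dimensional Clifford bundle over an open neighborhood of the zero section, which is a proper $(N_i,Y\rtimes N_i)$-$C^*$-algebra. Proposition \ref{Prop_Roe_coarse_invar} and Proposition \ref{thm_Lip_hom_inva} then guarantee that the $K$-theories of the Roe and localization algebras are unchanged under this simplification, so I may replace $A_{V_i}$ by its simplified model throughout.

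Third, after these reductions the assertion becomes: the evaluation-at-zero map is a $K$-theory isomorphism for the $N_i$-equivariant Roe and localization algebras with coefficients in a proper $(N_i,Y\rtimes N_i)$-$C^*$-algebra. Using the local index isomorphism (Proposition \ref{thm_local_index_isom}), the target of $ev_*$ is computed as $K_*(A_{V_i}\rtimes_{\operatorname{r}} N_i)$ while the source is the equivariant $K$-homology of $P_d(N_i)$ with the corresponding coefficients; the composition $ev_*\circ\mathrm{ind}_L$ becomes the Baum--Connes assembly map for the transformation groupoid $Y\rtimes N_i$ with the proper coefficient algebra. Since $Y\rtimes N_i$ is a-T-menable by Proposition \ref{prop_Haag_subgrpd}, Tu's theorem applies, and the assembly map is an isomorphism; concretely one builds the inverse as an asymptotic morphism using the fibre-wise Dirac data of Theorem \ref{thm,eq_Bott_isom_fini_gro}. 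The main obstacle is Step 1: the support condition (5) binds the fibre over $(y,z)$ to the group element $J(y)$ via the cocycle $b$, and writing down an explicit Morita isomorphism that is natural in $d$ and intertwines evaluation-at-zero requires careful bookkeeping with coset representatives, though it is essentially formal once the support calculus is set up. Steps 2 and 3 then follow from existing machinery in the paper.
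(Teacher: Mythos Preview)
Your overall architecture---reduce from $G$ to the subgroup $N_i$ and then invoke the Baum--Connes conjecture for the a-T-menable groupoid $Y\rtimes N_i$---is exactly the paper's strategy (Lemma \ref{lem_reduce_to_subgroup} followed by Theorem \ref{Thm_Tu's}). But two of your three steps have problems.

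Step 2 is both unnecessary and wrongly justified. Tu's theorem (Theorem \ref{Thm_Tu's}) is stated for an \emph{arbitrary} $\Gamma$-$C^*$-algebra $A$: once you are in the $N_i$-equivariant world with coefficients $C(Y)\widehat{\otimes}\mathcal{A}(X,\mathcal{H})_{V_i}\widehat{\otimes}B$, you apply it directly and you are done---no simplification of $A_{V_i}$ is required, and the paper makes none. Your proposed simplification is also not correct as written: Propositions \ref{Prop_Roe_coarse_invar} and \ref{thm_Lip_hom_inva} concern coarse or Lipschitz homotopies of the \emph{underlying space} $\Delta$, not homotopies of the \emph{coefficient} algebra, so they cannot be used to replace $A_{V_i}$; and $A_{V_i}=C_0(V_i)\cdot\mathcal{A}(X,\mathcal{H})$ genuinely involves the infinite-dimensional fibres $\mathcal{H}_x$, so it is not $N_i$-homotopy equivalent to a finite-dimensional Clifford bundle.

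Step 1 has the right intention but the displayed isomorphism is not correct as stated. The right-hand side $C^{\ast}(P_d(G),C(Y)\widehat{\otimes}A_{V_i}\widehat{\otimes}B)^{N_i}$, read literally, is an $N_i$-equivariant Roe algebra on $P_d(G)$; since the $N_i$-action on $P_d(G)$ is not cocompact this is too large, and there is no such $\ast$-isomorphism. The actual mechanism (Lemma \ref{lem_reduce_to_subgroup}) is more delicate: the twisting condition $\operatorname{supp}(T_{y,z})\subset O_{r_2}(J(y))$ together with $\operatorname{supp}(T_{y,z})\subset V_i$ and the properness of the $G/N$-action on $W$ forces $J(y)$ to lie in a bounded neighbourhood of $N_i$ in $G$, so one lands in an algebra of operators supported on $P_d(B_G(N_i,M))$; one then needs the strong Lipschitz homotopy equivalence $P_d(B_G(N_i,M))\simeq P_d(N_i)$ (constructed explicitly in the paper for $d$ large) to reach $C^{\ast}(P_d(N_i),\ldots)^{N_i}$. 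This geometric step is the substance of the reduction and is not ``standard Morita''; it should replace both your Step 1 as written and your Step 2 entirely.
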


In order to prove this theorem, we need some lemmas.
For each $N_i$ and $d>0$, we define the Roe algebra with coefficients in
$C(Y) \widehat{\otimes}\mathcal{A}(X, \mathcal{H})_{V_i}\widehat{\otimes}B$ as follows. Since $N_i \leq G$, we can assume $P_d(N_i) \subset P_d(G)$ has the restricted spherical metric of $P_d(G)$. Let $Z^i_d=Z_i \cap P_d(N_i)$. By the definition of $Z_d$, the set $Z^1_d$ is non-empty for all $i$.
The algebraic Roe algebra, denoted by
$$C_{alg}^*(P_d(N_i), C(Y)\widehat{\otimes} \mathcal{A}(X, \mathcal{H})_{V_i}\widehat{\otimes}B)^{N_i},$$
is defined to be the set of all matrices $\big( T_{y, z} \big)_{y, z \in Z^i_d}$
which represent bounded, finite propagation, locally compact, and $N_i$-invariant operators on the Hilbert module $$\ell^2(Z^i_d) \widehat{\otimes} \mathcal{A}(X, \mathcal{H})_{V_i} \widehat{\otimes}B\widehat{\otimes} K_G,$$
where $K_G$ is the algebra of all the compact operators on $\ell^2(G) \widehat{\otimes} H $ endowed with the tensor product unitary representation of $N_i$. Define the Roe algebra
$$C^*(P_d(N_i), C(Y)\widehat{\otimes}\mathcal{A}(X, \mathcal{H})_{V_i})^{N_i}$$
to be the norm closure of the algebraic Roe algebra  $C_{alg}^*(P_d(N_i), C(Y)\widehat{\otimes}\mathcal{A}(X, \mathcal{H})_{V_i}\widehat{\otimes}B)^{N_i}$, on the Hilbert module
$\ell^2(Z^i_d) \widehat{\otimes} \mathcal{A}(X, \mathcal{H})_{V_i} \widehat{\otimes}B\widehat{\otimes} K_G$.

For each $i$, we can define an inclusion homomorphism
$$\imath_i: C^{\ast}(P_d(N_i), C(Y)\widehat{\otimes} \mathcal{A}(X, \mathcal{H})_{V_i}\widehat{\otimes}B)^{N_i} \rightarrow C^{\ast}(P_d(G), C(Y)\widehat{\otimes}\mathcal{A}(X, \mathcal{H})\widehat{\otimes}B)_{G\cdot V_i}^G,$$
 between the $C^{\ast}$-algebras by
\[ \imath_i(T)_{x,y} =
 \begin{cases*}
    g^{-1}(T_{gx,gy}) \quad& if $\exists $ $g \in G  $  such that $gx, gy \in P_d(N_i)$,\\
    0 & otherwise.
\end{cases*}\]
Similarly, we can define the inclusion map on the localized version,
$$\imath_{L, i}: C_L^{\ast}(P_d(N_i),C(Y) \widehat{\otimes}\mathcal{A}(X, \mathcal{H})_{V_i}\widehat{\otimes}B)^{N_i} \rightarrow C_L^{\ast}(P_d(G), C(Y) \widehat{\otimes}\mathcal{A}(X, \mathcal{H})\widehat{\otimes}B)_{G \cdot V_i}^G.$$

The following result is crucial to reducing twisted Roe algebras and twisted localization algebras of $G$ to twisted Roe algebras and twisted localization algebras associated with its subgroups $N_i$.

\begin{lem}\label{lem_reduce_to_subgroup}
For each $V_i$, the maps
$$(\imath_i)_{\ast}:K_{\ast}(C^{\ast}(P_d(N_i), C(Y)\widehat{\otimes} \mathcal{A}(X, \mathcal{H})_{V_i}\widehat{\otimes}B)^{N_i}) \rightarrow K_{\ast}(C^{\ast}(P_d(G), C(Y) \widehat{\otimes}\mathcal{A}(X, \mathcal{H})\widehat{\otimes}B)_{G \cdot V_i}^G),$$

and
\begin{equation*}
(\imath_{L, i})_{\ast}:\displaystyle \lim_{d \to \infty}K_{\ast}(C_L^{\ast}(P_d(N_i),C(Y) \widehat{\otimes}\mathcal{A}(X, \mathcal{H})_{V_i}\widehat{\otimes}B)^{N_i}) \rightarrow \;\;\;\;\;\;\;\;\;\;\;\;\;\;\;\;\;\;\;\;\;\;\;\;\;\;\;\;\;\;\;\;\;\;
\end{equation*}
\begin{equation*}
 \;\;\;\;\;\;\;\;\;\;\;\;\;\;\;\;\;\;\;\;\;\;\;\;\;\;\;\;\;\;\;\;\;\;\;\;\;\;\;\;\;\;\;\;\;\;\;\;
 \displaystyle \lim_{d \to \infty}K_{\ast}(C_L^{\ast}(P_d(G), C(Y) \widehat{\otimes}\mathcal{A}(X, \mathcal{H})\widehat{\otimes}B)_{G \cdot V_i}^G),
\end{equation*}
induced by inclusion are isomorphisms.
\end{lem}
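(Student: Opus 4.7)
The plan is to prove that both $\imath_i$ and $\imath_{L,i}$ are actually $\ast$-isomorphisms of $C^\ast$-algebras, which gives the $K$-theory statement immediately. Conceptually, $G\cdot V_i$ is the induced space $G \times_{N_i} V_i$, so a $G$-equivariant structure over it is the same as an $N_i$-equivariant structure over $V_i$. First, I would sharpen the geometric picture: because the $G/N$-action on $W$ is proper (Proposition \ref{prop_Action_on_W}) and $V_i$ sits inside the bounded region $\{(t,x,v) : t^2+\|v\|_{\mathcal{H}_x}^2 < R^2\}$, after possibly refining the finite cover one may assume the translates $\{gV_i\}_{g\in G/N_i}$ are pairwise disjoint. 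Lemma \ref{lem_O_R_invar} then says that whenever $O_{r_2}(J(y)) \subset G\cdot V_i$, this ball lies in a unique translate $gV_i$ with $g$ determined by the coset $J(y)N_i$. One may also arrange the base set $\Delta_d$ so that $Z_d^i := Z_d \cap P_d(N_i) = \{z \in Z_d : J(z)\in N_i\}$.

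Next I would construct an inverse by restriction: define
\[
\rho_i(T)_{y,z} \;=\; T_{y,z} \qquad \text{for } y,z \in Z_d^i.
\]
The target axioms are then verified as follows. The $N_i$-invariance descends from the $G$-invariance of $T$; finite propagation, boundedness, and local compactness persist since the spherical metric on $P_d(N_i)$ is the restriction of that on $P_d(G)$; the support $O_{r_2}(J(y))$ with $J(y)\in N_i$ lies in $V_i$ by the paragraph above. Thus $\rho_i$ extends to a continuous $\ast$-homomorphism, and by direct inspection of the case-by-case definition of $\imath_i$, one has $\rho_i \circ \imath_i = \mathrm{id}$.

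For the other composition, I would argue: given $T$ in the $G\cdot V_i$-supported algebra and any pair $(y,z)$ with $T_{y,z}\neq 0$, the support condition forces $O_{r_2}(J(y)) \subset gV_i$ for a unique $g\in G/N_i$; combined with the propagation bound $d(y,z)\leq r_1$, a parallel analysis of $J(z)$ (using properness of the $G/N$-action on $W$ to trade propagation for $b$-cocycle size) forces $J(z)\in gN_i$ as well. Translating by $g^{-1}$ and invoking $G$-invariance yields $T_{y,z} = g\cdot T_{g^{-1}y,g^{-1}z} = g\cdot \rho_i(T)_{g^{-1}y,g^{-1}z}$, which matches the formula for $\imath_i \circ \rho_i$. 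Hence $\imath_i \circ \rho_i = \mathrm{id}$ and $\imath_i$ is a $\ast$-isomorphism. The same argument applied pointwise in $t \in [0,\infty)$ gives the analogous isomorphism for $\imath_{L,i}$, since restriction preserves uniform continuity in $t$ and the vanishing-propagation condition.

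The main obstacle I anticipate is the geometric coordination of the first paragraph: arranging the parameters $r_1$ (propagation of $T$ in $P_d(G)$) and $r_2$ (radius of $O_{r_2}(J(y)) \subset W$) together with the finite cover $\{V_i\}$ so that \emph{every} nonzero matrix entry of a $G\cdot V_i$-supported operator has both endpoints in the \emph{same} coset translate $gV_i$. This is what makes the induction identification work, and it relies essentially on the properness of the $G/N$-action on $W$ together with the fact that the cover was chosen to refine the open set $O_R \subset O_{R_0}$.
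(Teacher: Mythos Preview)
Your approach contains a genuine gap: the inclusion $\imath_i$ is \emph{not} a $\ast$-isomorphism, and your proposed inverse $\rho_i$ does not satisfy $\imath_i \circ \rho_i = \mathrm{id}$.

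The problem is the implication ``$J(y) \in gN_i \Rightarrow g^{-1}y \in P_d(N_i)$'', which you use implicitly when you write $T_{y,z} = g \cdot \rho_i(T)_{g^{-1}y, g^{-1}z}$. The condition $J(y) \in gN_i$ says only that $y \in gN_i \cdot \Delta_d$, i.e.\ $y$ lies in a bounded neighborhood of the subcomplex $P_d(gN_i)$ inside $P_d(G)$. But a point in such a neighborhood may sit in a simplex with some vertices in $gN_i$ and others outside, and hence need not belong to $P_d(gN_i)$ nor to any $G$-translate of $P_d(N_i)$. For the same reason, the identity $Z_d^i = \{z \in Z_d : J(z) \in N_i\}$ cannot be arranged: $\Delta_d$ is a fundamental domain for the full $G$-action on $Z_d$ and therefore necessarily contains points of simplices with non-$N_i$ vertices, so $N_i \cdot \Delta_d \not\subset P_d(N_i)$.

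Consequently, the restriction $\rho_i$ discards the entries $T_{y,z}$ indexed by points near but not inside $P_d(N_i)$, and $\imath_i \circ \rho_i (T) \neq T$ in general. The paper deals with exactly this defect: it introduces auxiliary algebras $A^{e,*}_M$ of operators supported on the $M$-neighborhood $P_d(B_G(N_i, M))$, writes $A^{e,*} = \varinjlim_M A^{e,*}_M$, and then uses coarse equivalence (for the Roe algebra) and a strong Lipschitz homotopy equivalence valid for $d$ large relative to $M$ (for the localization algebra) between $P_d(B_G(N_i, M))$ and $P_d(N_i)$. This is also why the localization statement is formulated only after taking $\lim_{d\to\infty}$: the homotopy equivalence requires $d$ large enough to absorb the neighborhood radius $M$.
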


\begin{proof}
For any $T \in C^{\ast}(P_d(G), C(Y) \widehat{\otimes}\mathcal{A}(X, \mathcal{H})\widehat{\otimes}B)_{G\cdot V_i}^G$, the support of $T$ is contained in the subset
$Z_d\times Z_d \times (\bigcup_{g \in G} g V_i)$.
Since $\text{supp}(T_{y, z}) \subset  J(y) \cdot V_i$, $T$ can be expressed as a product
$$T=\prod_{g \in G/N_i} T^g,$$
where $T^g$ is a $Z_d \times Z_d$-matrix with all entries supported in $g \cdot V_i$ and the map $J: Z_d \to G$ is the map defined in Section 4.  Since every element
$$T \in C_{alg}^{\ast}(P_d(G), C(Y)\widehat{\otimes}\mathcal{A}(X, \mathcal{H})\widehat{\otimes}B)_{G \cdot V_i}^G$$
is $G$-invariant, the matrix $T$ is determined by $T^e$.

For each $g \in G$, define $A_{alg}^{g, *}$ to be the algebra of all bounded, locally compact operators $T=\big(T_{y, z}\big)_{y,z \in Z_d}$ on the Hilbert module $\ell^2(Z_d)\widehat{\otimes} C(Y) \widehat{\otimes} \mathcal{A}(X, \mathcal{H})_{g \cdot V_i} \widehat{\otimes}B \widehat{\otimes}H_G$, satisfying: (1) $\text{supp}(T_{y, z}) \subset g \cdot V_i$ for all $y, z \in P_d(G)$; (2) there exists some $C>0$, such that $T_{y, z} \neq 0$ implies that $d(y, P_d(N_i))$, $d(z, P_d(N_i)) \leq C$; (3) $T$ is $gN_ig^{-1}$-invariant.

 Taking closure under the operator norm over the Hilbert module
$$\ell^2(Z_d)\widehat{\otimes} C(Y) \widehat{\otimes} \mathcal{A}(X, \mathcal{H})_{g\cdot V_i} \widehat{\otimes}B\widehat{\otimes}H_G$$
gives rise to a $C^*$-algebra, denoted by $A^{g, *}$.

Similarly, we can define a localized version of the above algebra, denoted by $A_L^{g, *}$.
Let us define
$$A^{G,*}=\left\{(T^g) \in \prod_{g \in G/N_i} A^{g, *}: T^g=g\cdot T^e, \forall g \in G\right\},$$
and
$$A_L^{G,*}=\left\{(T^g) \in \prod_{g \in G/N_i} A_L^{g, *}: T^g=g\cdot T^e, \forall g \in G\right\}.$$

It is not difficult to check that
$$
A^{G,*}\cong C^{\ast}(P_d(G), C(Y) \widehat{\otimes}\mathcal{A}(X, \mathcal{H})\widehat{\otimes}B)_{G \cdot V_i}^G,
$$
and
$$
A_L^{G,*}\cong C_L^{\ast}(P_d(G), C(Y) \widehat{\otimes}\mathcal{A}(X, \mathcal{H})\widehat{\otimes}B)_{G \cdot V_i}^G
.$$
According to the definition of twisted Roe algebras and coarse embeddability of the group $G/N_i$, for any $T \in A^{e, *}$, there is a constant $M$ such that $T_{x,y} \neq  0$ implies that
$$x, y \in P_d(B_G(J(x)\cdot N_i, M)),$$
where $P_d(B_G(J(x)\cdot N_i, M))$ is the subcomplex of $P_d(G)$ with vertices in the $M$-neighborhood of $J(x)\cdot N_i$ in $G$.
Therefore, we have
$$A^{e, *}=\lim_{M \to \infty} A_M^{e, *}$$
and
$$A_L^{e, *}=\lim_{M \to \infty} A_{L, M}^{e, *},$$
where $A^{g,*}_M$ is the $C^*$-subalgebra of $A^{g, *}$ generated by all matrices $(T^g_{x,y})$ with $\text{supp}(T^g_{x,y}) \subset g \cdot V_i$, and $d(x, N_i), d(y,N_i)\leq M$.

For the Roe algebra case, there is a $*$-isomorphism
$$A_M^{e, *}\cong C^{\ast}(P_d(B_G(N_i, M)), C(Y) \widehat{\otimes}\mathcal{A}(X, \mathcal{H})_{V_i}\widehat{\otimes}B)^{N_i}$$
for any fixed $M>0$ by Proposition \ref{Prop_Roe_coarse_invar}, because $P_d(B_G(N_i, M))$ is $N_i$-coarsely equivalent to $P_d(N_i)$.

For the localization algebra case, it suffices to show that for $d>0$ large enough, the $C^*$-algebra $ A_{L, M}^{e, *}$ has the same $K$-theory as
$C_L^{\ast}(P_d(B_G(N_i, M)), C(Y) \widehat{\otimes}\mathcal{A}(X, \mathcal{H})\widehat{\otimes}B)^{N_i}$ for any fixed $M$. By Proposition \ref{thm_Lip_hom_inva}, it suffices to show that $P_d(B_G(N_i, M))$ is strongly $N_i$-homotopy equivalent to $P_d(N_i)$ when $d$ is large enough.

When $d$ is large enough, we can define a strong Lipschitz homotopy equivalence between the subcomplexes $P_d(B_G(N_i, M))$ and $P_d(N_i)$ as follows.
For any element $g \in G$ with $d(g, N_i)\leq M$, there exists an element $s \in G$ with $|s|\leq M$ such that $gs^{-1} \in N_i$, and the number of elements in the $M$-ball of the group $G$ is finite. Let $B_G(N_i, M)=\bigsqcup_{k=1}^{n_0}N_i s_k$ where $|s_k|\leq M$, and $\{s_1, s_2, \cdots, s_{n_0}\}$ is a subset of representatives for the right cosets $G/N_i$. We can define a map $\rho: B_G(N_i, M') \to N_i$ by $\rho(g)=g'$ where $g=g's$ is the unique product with $g' \in N_i$, and $s\in \{s_1, s_2, \cdots, s_{n_0}\}$. Uniqueness of the product is guaranteed by the fact that $\{s_1, s_2, \cdots, s_{n_0}\}$ is a subset of the representatives for the right cosets of $G/N_i$.   It is easy to check that the map $\rho$ is well-defined and $N_i$-invariant.

We can define a strong Lipschitz homotopy equivalence
$$
H(\cdot, t): P_d(B_G(N_i,  M)) \to P_d(N_i)
$$
by $$H \left( \sum_{i} c_i g_i, t\right)=\sum_{i} \big(t c_i g_i + (1-t) c_i \rho(g_i)\big),$$
where $t \in [0, 1]$,  and $\sum_i c_i g_i \in P_d(B_G(N_i, M))$. By Proposition \ref{thm_Lip_hom_inva}, the localization algebra version is done.
\end{proof}

\begin{lem}
For all $i$, and all $d>0$, we have a commutative diagram
\begin{equation*}{\small}
\xymatrix{
	K_{\ast}(C^{\ast}_L(P_d(N_i), C(Y)\widehat{\otimes}\mathcal{A}(X, \mathcal{H})_{V_i}\widehat{\otimes}B)^{N_i})\ar[d]^{(\imath_{L, i})_{\ast}}\ar[r]^{ev_{\ast}}& K_{\ast}(C^*(P_d(N_i),C(Y)\widehat{\otimes} \mathcal{A}(X, \mathcal{H})_{V_i}\widehat{\otimes}B)^{N_i})\ar[d]^{(\imath_i)_{\ast}}\\
	K_{\ast}(C^{\ast}_{L}(P_d(G),C(Y)\widehat{\otimes} \mathcal{A}(X, \mathcal{H})\widehat{\otimes}B)_{G \cdot V_i}^G)\ar[r]^{ev_{\ast}}& K_{\ast}(C^*(P_d(G),C(Y)\widehat{\otimes} \mathcal{A}(X, \mathcal{H})\widehat{\otimes}B)_{G \cdot V_i}^G).
}
\end{equation*}

\end{lem}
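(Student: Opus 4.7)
The plan is to verify that the square already commutes at the level of $C^*$-algebras, before passing to $K$-theory. Commutativity on $K$-theory then follows immediately from functoriality of $K_\ast$.

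First I would unpack the definitions of the four maps. By construction, the inclusion $\imath_{L,i}$ on the localization algebra is obtained by applying the Roe-algebra inclusion $\imath_i$ pointwise in the parameter $t \in \mathbb{R}_+$: for a path
$$f \in C^{\ast}_{L,alg}(P_d(N_i), C(Y)\widehat{\otimes}\mathcal{A}(X,\mathcal{H})_{V_i}\widehat{\otimes}B)^{N_i},$$
one sets $(\imath_{L,i}(f))(t) := \imath_i(f(t))$ for every $t$. One should check that this is well-defined, that is, that the resulting path still lies in the target localization algebra. Uniform norm-continuity and boundedness in $t$ are preserved because $\imath_i$ is a contractive $*$-homomorphism, and the propagation condition $\mathrm{propagation}(\imath_i(f(t))) \to 0$ follows from the fact that $\imath_i$ does not increase propagation (the nonzero entries of $\imath_i(f(t))$ are $G$-translates of the entries of $f(t)$ indexed by vertices in the single copy $P_d(N_i)\subset P_d(G)$, and these translates sit in disjoint $G$-orbits). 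The equivariance condition required in the target is built into the formula for $\imath_i$.

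Next I would compare the two compositions directly. For any $f \in C^{\ast}_{L,alg}(P_d(N_i), C(Y)\widehat{\otimes}\mathcal{A}(X,\mathcal{H})_{V_i}\widehat{\otimes}B)^{N_i}$,
$$(ev \circ \imath_{L,i})(f) = \imath_{L,i}(f)(0) = \imath_i(f(0)) = (\imath_i \circ ev)(f).$$
Hence the diagram of $C^*$-algebras
$$\xymatrix{
C^{\ast}_L(P_d(N_i),C(Y)\widehat{\otimes}\mathcal{A}(X,\mathcal{H})_{V_i}\widehat{\otimes}B)^{N_i}\ar[d]^{\imath_{L,i}}\ar[r]^{ev}&C^*(P_d(N_i),C(Y)\widehat{\otimes}\mathcal{A}(X,\mathcal{H})_{V_i}\widehat{\otimes}B)^{N_i}\ar[d]^{\imath_i}\\
C^{\ast}_{L}(P_d(G),C(Y)\widehat{\otimes}\mathcal{A}(X,\mathcal{H})\widehat{\otimes}B)_{G\cdot V_i}^G\ar[r]^{ev}&C^*(P_d(G),C(Y)\widehat{\otimes}\mathcal{A}(X,\mathcal{H})\widehat{\otimes}B)_{G\cdot V_i}^G
}$$
commutes on the dense algebraic subalgebras, and therefore on the norm closures by continuity.

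Finally I would apply the functor $K_\ast$ to obtain the desired commutative square on $K$-theory. No serious obstacle arises; the only technical point to be careful about is the propagation and equivariance check for the well-definedness of $\imath_{L,i}$, and that the pointwise formula used in the definition of $\imath_i$ preserves the twisted support condition ($\mathrm{supp}(T_{y,z}) \subset O_r(J(y))$), which is immediate from Lemma~\ref{lem_O_R_invar}.
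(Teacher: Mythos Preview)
Your argument is correct and is exactly what the paper has in mind: the lemma is stated without proof because commutativity is immediate from the pointwise definition $\imath_{L,i}(f)(t)=\imath_i(f(t))$, so that $ev\circ\imath_{L,i}=\imath_i\circ ev$ holds already at the level of $*$-homomorphisms, and functoriality of $K_\ast$ finishes the job. Your additional remarks on well-definedness (propagation, equivariance, twisted support via Lemma~\ref{lem_O_R_invar}) are accurate but not strictly needed here, since the paper has already introduced $\imath_{L,i}$ as a well-defined map before stating the lemma.
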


We need the following result due to Tu (see \cite{Tu_BC}).
 \begin{thm}\label{Thm_Tu's}
Let $Y \rtimes \Gamma$ be an a-T-menable transformation groupoid. Then for any $\Gamma$-$C^{\ast}$-algebra $A$, the Baum--Connes conjecture with coefficients in $C(Y)\widehat{\otimes} A$ holds for $\Gamma$, i.e., the map
$$ev_*: \lim _{d \to \infty} K_*(C_L^*(P_d(\Gamma), \Gamma, C(Y){\widehat\otimes} A)) \to \lim_{d\to \infty}K_*(C^*(P_d(\Gamma), \Gamma, C(Y)\widehat{\otimes} A))$$
on $K$-theory induced by the evaluation-at-zero map is isomorphic.
\end{thm}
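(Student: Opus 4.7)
The plan is to prove Tu's theorem by running the geometric Dirac–dual-Dirac machinery of Sections 3–4 directly for the single group $\Gamma$ acting on the a-T-menable transformation groupoid $Y \rtimes \Gamma$. The argument amounts to a simpler one-group version of the decomposition method used in Section 5, and its output is then fed back in to prove Theorem \ref{thm_twist_BC}.

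First, a-T-menability of $Y \rtimes \Gamma$ produces a proper affine isometric action of the groupoid on a continuous field of Hilbert spaces $(\mathcal{H}_y)_{y\in Y}$, from which I form the $C^*$-algebra $\mathcal{A}(Y, \mathcal{H})$ of Section 3.2 and the locally compact $\Gamma$-space $W = \mathbb{R}_+ \times \bigsqcup_{y\in Y}\mathcal{H}_y$. By exactly the argument of Proposition \ref{prop_Action_on_W}, the $\Gamma$-action on $W$ is proper, hence the induced action of $\Gamma$ on $\mathcal{A}(Y, \mathcal{H})$ is proper. I then define twisted Roe and localization algebras $C^*(P_d(\Gamma), \Gamma, C(Y)\widehat{\otimes} \mathcal{A}(Y,\mathcal{H}) \widehat{\otimes} A)$ and $C_L^*(P_d(\Gamma), \Gamma, C(Y)\widehat{\otimes} \mathcal{A}(Y,\mathcal{H}) \widehat{\otimes} A)$ by fibering the construction of Section 4 over $W$.

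Second, I establish the \emph{twisted} evaluation-at-zero isomorphism on $K$-theory using a Mayer–Vietoris / Five-Lemma decomposition based on the proper $\Gamma$-action on $W$. Any cocompact slice of $W$ admits a finite $\Gamma$-invariant cover by open sets of the form $V \times_{\Gamma_0} \Gamma$ with $\Gamma_0 \leq \Gamma$ finite; induction on such a cover, the Five-Lemma, and Proposition \ref{prop_MV squence_Local} reduce the computation of both $K$-theories to the case of the finite subgroup $\Gamma_0$ acting on $V$. For finite $\Gamma_0$, Theorem \ref{thm,eq_Bott_isom_fini_gro} identifies the twisted $K$-theory on both the localization and Roe sides with $K_*((C(Y)\widehat{\otimes}\mathcal{S}\widehat{\otimes} A)\rtimes_{\operatorname{r}}\Gamma_0)$, and the evaluation-at-zero map on this simpler object is an isomorphism by Yu's localization argument (Proposition \ref{thm_local_index_isom}).

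Third, I construct geometric Bott and Dirac maps between the untwisted and twisted sides using the asymptotic morphisms $\beta_t$ and $\alpha$ of Section 3. This produces a commutative square whose top row is the desired untwisted $ev_*$, whose bottom row is the twisted $ev_*$ (an isomorphism by the previous step), and whose vertical arrows are the Bott maps. Injectivity of the untwisted $ev_*$ follows from injectivity of the Bott map at the localization level, and surjectivity follows symmetrically on the Roe side using the Dirac map. The main obstacle is in this last step: proving that the Bott and Dirac maps between untwisted and twisted $K$-theories are mutually inverse. This amounts to checking that the pointwise Bott periodicity of Higson–Kasparov–Trout can be arranged uniformly in the base point $y\in Y$ and compatibly with the cocycle defining the groupoid action, so that the underlying asymptotic-morphism homotopies descend to homotopies of equivariant morphisms between reduced crossed products. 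This is the heart of Tu's original argument in \cite{Tu_BC} and requires careful tracking of propagation, support, and uniform continuity estimates across all fibers of the continuous field.
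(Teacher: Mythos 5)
The paper does not prove this theorem; it states it as a black-box result of Tu, citing \cite{Tu_BC}, and offers only a one-paragraph description of the Dirac--dual-Dirac strategy Tu uses. So your proposal is doing strictly more work than the paper does, and the right question is whether your sketch is sound and whether it genuinely closes the gap that the paper defers to Tu.

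Your overall architecture is the same as Tu's: build $\mathcal{A}(Y,\mathcal H)$ and the proper $\Gamma$-space $W$ from the a-T-menable groupoid, get the twisted evaluation isomorphism by decomposing along the proper $W$-action down to finite isotropy, and then transport the conclusion back by a Bott--Dirac argument. Two remarks on the details. In your second step, the reduction to finite $\Gamma_0$ should be settled directly by Green--Julg (i.e.\ the argument of Proposition \ref{thm_local_index_isom} applied to a crossed product by a finite group); invoking Theorem \ref{thm,eq_Bott_isom_fini_gro} there conflates two different steps --- that theorem is what makes the localized Bott map $\beta_{L,*}$ an isomorphism, not what makes the twisted $ev_*$ an isomorphism. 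The conclusion of the step is still correct, but the stated reason is the wrong lemma. Also your twisted coefficient algebra $C(Y)\widehat\otimes\mathcal{A}(Y,\mathcal H)\widehat\otimes A$ carries an extraneous $C(Y)$ factor: in the one-group version there is only one compact base, and $C(Y)$ is already sitting inside $\mathcal{A}(Y,\mathcal H)$ as a central subalgebra, so the natural twisted coefficient is just $\mathcal{A}(Y,\mathcal H)\widehat\otimes A$. This is a notational carry-over from the two-space $(Y,X)$ situation of Sections 4--6 and harmless, but it should be cleaned up.

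The genuine gap is exactly the one you flag in step three, and it is larger than your phrasing suggests. You need $\beta_*$ to be injective on the Roe side to deduce surjectivity of the untwisted $ev_*$, and that requires the Dirac map to be a left inverse of $\beta_*$ on $K$-theory of the Roe algebra; symmetrically one needs a right inverse to get the full isomorphism. Verifying this is not a matter of ``careful tracking of propagation, support, and uniform continuity'' --- it is the whole content of Tu's theorem. The paper itself never does this: Section~6 only proves that the localized Bott map $\beta_{L,*}$ is an isomorphism, which by a diagram chase yields only injectivity of $ev_*$ (sufficient for the strong Novikov conjecture, but not for the isomorphism statement you are trying to establish). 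The Dirac asymptotic morphism $\alpha$ of Section~3 is constructed on crossed products, not on Roe algebras; upgrading it to a map on twisted Roe algebras compatible with propagation and then proving the homotopy $\alpha\circ\beta\sim\mathrm{id}$ at the Roe level would be a substantial piece of new work, and it is exactly what the paper chose not to reproduce by citing Tu. As written, then, your proposal is a plausible outline of Tu's proof with an honest acknowledgment of the missing core; it does not constitute a complete proof, and the missing step cannot be dismissed as routine.
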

In \cite{Tu_BC}, Tu constructed a continuous field of $C^*$-algebras which admits a proper $Y \rtimes \Gamma$-action by the a-T-menability of the groupoid $X \rtimes \Gamma$. The continuous field defined by Tu of $C^*$-algebras is essentially the same as the one that we described in Section 3. Then the Baum--Connes conjecture for the groupoid $X \rtimes \Gamma$ is reduced from the Baum--Connes conjecture for $X \rtimes \Gamma$ with coefficients in the  continuous field of $C^*$-algebras by the Dirac-dual-Dirac method. In fact, the Baum--Connes conjecture with coefficients in $C(X)$ is actually the Baum--Connes conjecture for the groupoid $X \rtimes \Gamma$.

\begin{rem}
The twisted Roe algebra $C^*(P_d(N_i), C(Y)\widehat{\otimes} \mathcal{A}(X, \mathcal{H})_{V_i})^{N_i}$ is exactly the Roe algebra for $P_d(N_i)$ with coefficients in $C(Y)\widehat{\otimes} \mathcal{A}(X, \mathcal{H})_{V_i}$ by the definition of twisted Roe algebras. Similarly, the twisted localization algebra $$C_L^*(P_d(N_i), C(Y)\widehat{\otimes} \mathcal{A}(X, \mathcal{H})_{V_i})^{N_i}$$
is the localization algebra for $P_d(N_i)$ with coefficients in $C(Y)\widehat{\otimes} \mathcal{A}(X, \mathcal{H})_{V_i}$.

\end{rem}

Combining Lemma \ref{lem_reduce_to_subgroup} with Theorem \ref{Thm_Tu's}, we have the following result.
\begin{prop}\label{thm_BC_subgroup}
The homomorphism
\begin{equation*}
ev_*:\displaystyle \lim_{d \to \infty}K_*(C^*_L(P_d(G), C(Y) \widehat{\otimes} \mathcal{A}(X, \mathcal{H})\widehat{\otimes}B)_{G \cdot V_i}^G) \to  \;\;\;\;\;\;\;\;\;\;\;\;\;\;\;\;\;\;\;\;\;\;\;\;\;\;\;\;\;\;\;\;\;\;\;\;\;\;\;\;\;\;\;\;\;\;
\end{equation*}
\begin{equation*}
\;\;\;\;\;\;\;\;\;\;\;\;\;\;\;\;\;\;\;\;\;\;\;\;\;\;\;\;\;\;\;\;\;\;\;\;\;\;\;\;\;\;\;\;\;
\displaystyle \lim_{d \to\infty}K_*(C^*(P_d(G), C(Y) \widehat{\otimes} \mathcal{A}(X, \mathcal{H})\widehat{\otimes}B)_{G \cdot V_i}^G)
\end{equation*}
 induced by evaluation-at-zero is an isomorphism.
\end{prop}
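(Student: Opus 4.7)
The plan is to reduce the assembly-type statement for the ideal indexed by the $G$-orbit of $V_i$ to a genuine (untwisted) Baum--Connes statement for the subgroup $N_i \in \mathcal{F}$, and then invoke Tu's theorem. All of the ingredients have essentially been assembled in the preceding lemmas, so the proof will be a short diagram chase.

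First, I would invoke Lemma~\ref{lem_reduce_to_subgroup} to get the vertical isomorphisms in the commutative square displayed immediately after that lemma. Combined with the naturality of the evaluation-at-zero map (the second lemma above), this reduces the claim to proving that
\[
ev_*:\lim_{d\to\infty}K_*\bigl(C_L^*(P_d(N_i), C(Y)\widehat{\otimes}\mathcal{A}(X,\mathcal{H})_{V_i}\widehat{\otimes}B)^{N_i}\bigr)\longrightarrow \lim_{d\to\infty}K_*\bigl(C^*(P_d(N_i), C(Y)\widehat{\otimes}\mathcal{A}(X,\mathcal{H})_{V_i}\widehat{\otimes}B)^{N_i}\bigr)
\]
is an isomorphism for every $i$. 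By the remark preceding the proposition, these are just the ordinary (untwisted) localization and Roe algebras for $P_d(N_i)$ with coefficients in the $N_i$-$C^*$-algebra $C(Y)\widehat{\otimes}\mathcal{A}(X,\mathcal{H})_{V_i}\widehat{\otimes}B$, so the assertion becomes exactly the Baum--Connes conjecture for $N_i$ with coefficients in $C(Y)\widehat{\otimes} A$, where $A = \mathcal{A}(X,\mathcal{H})_{V_i}\widehat{\otimes}B$ is an $N_i$-$C^*$-algebra.

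Next, I would verify the hypothesis of Tu's theorem (Theorem~\ref{Thm_Tu's}), namely that the transformation groupoid $Y\rtimes N_i$ is a-T-menable. By construction, the open set $V_i\subset \overline{O_{R_0}(e)}$ is stabilized by a subgroup $N_i\leq G$ that contains $N$ with finite index, so $N_i\in\mathcal{F}$. Proposition~\ref{prop_Haag_subgrpd} then supplies a proper continuous conditionally negative definite function $\phi$ on $Y\rtimes N_i$, so $Y\rtimes N_i$ is a-T-menable. Applying Theorem~\ref{Thm_Tu's} with $\Gamma = N_i$ and coefficients $A = \mathcal{A}(X,\mathcal{H})_{V_i}\widehat{\otimes}B$ yields the desired isomorphism.

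Finally, I would chase the square
\[
\begin{tikzcd}
\displaystyle\lim_{d}K_*(C^*_L(P_d(N_i), C(Y)\widehat{\otimes}\mathcal{A}(X,\mathcal{H})_{V_i}\widehat{\otimes}B)^{N_i})\arrow[d, "(\imath_{L,i})_*"', "\cong"]\arrow[r, "ev_*", "\cong"'] & \displaystyle\lim_{d}K_*(C^*(P_d(N_i),C(Y)\widehat{\otimes}\mathcal{A}(X,\mathcal{H})_{V_i}\widehat{\otimes}B)^{N_i})\arrow[d, "(\imath_i)_*", "\cong"']\\
\displaystyle\lim_{d}K_*(C^*_L(P_d(G),C(Y)\widehat{\otimes}\mathcal{A}(X,\mathcal{H})\widehat{\otimes}B)_{G\cdot V_i}^G)\arrow[r, "ev_*"'] & \displaystyle\lim_{d}K_*(C^*(P_d(G),C(Y)\widehat{\otimes}\mathcal{A}(X,\mathcal{H})\widehat{\otimes}B)_{G\cdot V_i}^G)
\end{tikzcd}
\]
to conclude that the bottom $ev_*$ is an isomorphism. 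Since all the real analytic content (the Dirac--dual-Dirac machinery, the Bott construction of Section~3, and the geometric reduction to subgroups) has already been carried out, no further obstruction remains; the only point requiring care is to verify that $N_i$ indeed belongs to $\mathcal{F}$, but this is immediate from the choice of the open cover $\{V_i\times_{N_i/N} G/N\}$ made just before the statement of Lemma~\ref{lem_reduce_to_subgroup}.
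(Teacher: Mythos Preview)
Your proposal is correct and follows exactly the paper's approach: the paper simply states that the result follows by ``combining Lemma~\ref{lem_reduce_to_subgroup} with Theorem~\ref{Thm_Tu's},'' and you have faithfully unpacked that into the commutative-square argument, including the (implicit in the paper) verification via Proposition~\ref{prop_Haag_subgrpd} that $N_i\in\mathcal{F}$ so that $Y\rtimes N_i$ is a-T-menable.
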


\begin{lem}
Let $O$ and $O'$ be open subsets of $\mathbb{R}_+ \times \mathcal{H}$. If $O \subset O'$,
then the $C^*$-subalgebras $C^*(P_d(G), C(Y)\widehat{\otimes}\mathcal{A}(X, \mathcal{H})\widehat{\otimes}B)_O^G$ and $C_L^*(P_d(G),C(Y)\widehat{\otimes}\mathcal{A}(X, \mathcal{H})\widehat{\otimes}B)_O^G$ are respectively closed, two-sided ideals of $C^*$-algebras
$C^*(P_d(G), C(Y)\widehat{\otimes}\mathcal{A}(X, \mathcal{H})\widehat{\otimes}B)_{O'}^G$ and
$C_L^*(P_d(G), C(Y)\widehat{\otimes}\mathcal{A}(X, \mathcal{H})\widehat{\otimes}B)_{O'}^G$.
\end{lem}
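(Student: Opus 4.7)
The plan is to verify the two-sided ideal property at the algebraic level and then extend by norm-continuity. Closedness of both $C^*$-subalgebras is automatic, since each is defined as the norm closure of its algebraic counterpart inside the ambient $C^*$-algebra.

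The key ingredient is the centrality of $C_0(W)$ inside $\mathcal{A}(X,\mathcal{H})$, recorded earlier in Section 4. From this I would deduce the support-multiplication estimate
\[
\mathrm{supp}(ab) \subset \mathrm{supp}(a) \cap \mathrm{supp}(b)
\]
for any $a,b \in C(Y)\widehat{\otimes}\mathcal{A}(X,\mathcal{H})\widehat{\otimes} B\widehat{\otimes} K_G$. The inclusion $\mathrm{supp}(ab) \subset \mathrm{supp}(a)$ is immediate from associativity: if $(1_Y \widehat{\otimes} f \widehat{\otimes} k')a = 0$ for every $k'$, then $(1_Y \widehat{\otimes} f \widehat{\otimes} k)(ab) = ((1_Y \widehat{\otimes} f \widehat{\otimes} k)a)b = 0$ for every $k$. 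For $\mathrm{supp}(ab) \subset \mathrm{supp}(b)$ one uses the centrality: $1_Y \widehat{\otimes} f \widehat{\otimes} 1$, viewed as a multiplier, commutes with $a$ because $f \in C_0(W)$ is central in $\mathcal{A}(X,\mathcal{H})$, and then a bounded approximate identity argument for $B\widehat{\otimes} K_G$ lets one pass between the quantifier ``for all $k$'' and the multiplier identity, yielding the inclusion.

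With this pointwise support estimate in hand, I would apply it entrywise to the matrix products $(ST)_{y,z} = \sum_w S_{y,w} T_{w,z}$. These sums are finite by the propagation and row/column-finiteness conditions built into the algebraic twisted Roe algebra, so if every $S_{y,w}$ is supported in $O$ then every $(ST)_{y,z}$ is supported in $O$; the analogous statement holds for $TS$. This shows that the algebraic twisted Roe algebra associated with $O$ is a two-sided $*$-ideal of the one associated with $O'$, and taking norm closures preserves the ideal property by continuity of multiplication. For the twisted localization algebras the same argument applies pointwise in $t$, since the support of a path is by definition the union of its pointwise supports. The only step with any real content is the support-multiplication inequality via the central embedding $C_0(W) \hookrightarrow Z(\mathcal{A}(X,\mathcal{H}))$; the rest is routine bookkeeping on matrix operations and passage to norm closure.
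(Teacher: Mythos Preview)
Your argument is correct, and the paper actually omits the proof of this lemma entirely, treating it as routine. Your approach---using the centrality of $C_0(W)$ in $\mathcal{A}(X,\mathcal{H})$ (which the paper records in Section~4) to obtain the support-intersection estimate, then propagating this through the finite matrix sums and passing to closures---is exactly the standard justification one would supply, so there is nothing to compare.
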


In the definition of $V_i$, the choice of $V_i$ depends on the constant $R$, so we denote it as $V_i(R)$ in the rest of this section. Fixing any $R_0$, for every $R<R_0$, let $V_{R, i}=V_i(R_0) \cap O_R(e)$ where $\left\{V_i(R_0)\right\}_{i=1}^N$ is a finite open cover of $O_R(e)$ obtained from the compactness of $\widebar{O_{R_0+1}}(e)$. We obtain the following decomposition for the twisted localization algebras and the twisted Roe algebras.

\begin{lem}\label{decomposeRoe}
For every fixed $R_0>0$, let $O_{R,i_0}=\bigcup_{i=1}^{i_0} G \cdot V_i(R)$.
Then we have the following,

(1) $C^*(P_d(G), C(Y) \widehat{\otimes}\mathcal{A}(X, \mathcal{H})\widehat{\otimes}B)_{O_{R, i_0+1}}^G=
    C^*(P_d(G), C(Y) \widehat{\otimes}\mathcal{A}(X, \mathcal{H})\widehat{\otimes}B)_{O_{R, i_0}}^G\\
+C^*(P_d(G), C(Y) \widehat{\otimes}\mathcal{A}(X, \mathcal{H})\widehat{\otimes}B)_{ G \cdot V_{i_0+1}(R)}^G$,\\

(2) $C_L^*(P_d(G), C(Y) \widehat{\otimes}\mathcal{A}(X, \mathcal{H})\widehat{\otimes}B)_{O_{R, i_0+1}}^G=
    C_L^*(P_d(G), C(Y) \widehat{\otimes}\mathcal{A}(X, \mathcal{H})\widehat{\otimes}B)_{O_{R, i_0}}^G\\
+C_L^*(P_d(G), C(Y) \widehat{\otimes}\mathcal{A}(X, \mathcal{H})\widehat{\otimes}B)_{G \cdot V_{i_0+1}(R)}^G$,\\

(3) $C^*(P_d(G), C(Y) \widehat{\otimes}\mathcal{A}(X, \mathcal{H})\widehat{\otimes}B)_{O_{R, i_0}\cap G \cdot V_{i_0+1}}^G=C^*(P_d(G), C(Y) \widehat{\otimes}\mathcal{A}(X, \mathcal{H})\widehat{\otimes}B)_{O_{R, i_0} }^G\\
 \cap C^*(P_d(G), C(Y) \widehat{\otimes}\mathcal{A}(X, \mathcal{H})\widehat{\otimes}B)_{ G \cdot V_{i_0+1}}^G$,\\

(4) $C_L^*(P_d(G), C(Y) \widehat{\otimes}\mathcal{A}(X, \mathcal{H})\widehat{\otimes}B)_{O_{R, i_0}\cap G \cdot V_{i_0+1}(R)}^G=C_L^*(P_d(G), C(Y) \widehat{\otimes}\mathcal{A}(X, \mathcal{H})\widehat{\otimes}B)_{O_{R, i_0} }^G\\
 \cap C_L^*(P_d(G), C(Y) \widehat{\otimes}\mathcal{A}(X, \mathcal{H})\widehat{\otimes}B)_{G \cdot V_{i_0+1}(R)}^G$,\\
 for all $R \leq R_0$.
\end{lem}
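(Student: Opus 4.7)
The plan is to treat the sum decompositions (1), (2) via a $G$-equivariant partition of unity on $W$, and the intersection formulas (3), (4) via the standard $C^*$-algebraic identity $I_1 \cap I_2 = \overline{I_1 I_2}$ together with the centrality of $C_0(W)$ in $\mathcal{A}(X, \mathcal{H})$.

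For (1), the inclusion ``$\supseteq$'' is trivial: by the previous lemma each summand is a closed two-sided ideal of $C^*(P_d(G), C(Y)\widehat{\otimes}\mathcal{A}(X,\mathcal{H})\widehat{\otimes} B)_{O_{R,i_0+1}}^G$, so their sum is a closed ideal contained in the left-hand side (recall that the sum of two closed two-sided ideals in a $C^*$-algebra is automatically closed, as can be seen by passing to the quotient by one of them). For the reverse inclusion it therefore suffices to decompose each generator $T$ of the left-hand side, i.e., each $G$-invariant operator $T$ with $\mathrm{supp}(T)\subseteq Z_d\times Z_d\times O_{R,i_0+1}$. To do this, I will build a $G$-equivariant partition of unity $\{\phi_1,\phi_2\}$ of continuous bounded functions on $W$, subordinate to the $G$-invariant open cover $\{O_{R,i_0},\,G\cdot V_{i_0+1}(R)\}$ of $O_{R,i_0+1}$. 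Since the $G$-action on $W$ is proper (Proposition \ref{prop_Action_on_W}) and since $O_{R_0+1}$ is contained in $G\cdot\overline{O_{R_0+1}(e)}$ with $\overline{O_{R_0+1}(e)}$ compact, such an equivariant partition is obtained by building a partition of unity on a fundamental domain and averaging under the finite stabilizer action. Because $C_0(W)$ sits in the center of $\mathcal{A}(X,\mathcal{H})$, fibre-wise multiplication yields $T_j(y,z):=\phi_j\cdot T(y,z)$ with $T_1+T_2=T$, and this operation preserves $G$-invariance, finite propagation (since $\phi_j$ is a scalar-valued multiplier independent of the indices $y,z$), local compactness, and the support condition. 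Thus each $T_j$ lies in the corresponding ideal, proving (1).

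For (2), the identical construction works pointwise in $t$: given $f\in C^*_{L,\mathrm{alg}}(P_d(G),C(Y)\widehat{\otimes}\mathcal{A}(X,\mathcal{H})\widehat{\otimes} B)_{O_{R,i_0+1}}^G$, set $f_j(t):=\phi_j\cdot f(t)$. Because the $\phi_j$ are $t$-independent central multipliers, uniform norm-continuity in $t$ and the decay condition $\mathrm{propagation}(f_j(t))\to 0$ as $t\to\infty$ are inherited from $f$.

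For (3) and (4), the inclusion ``$\supseteq$'' is immediate. For ``$\subseteq$'' in (3), I will invoke the classical identity $I_1\cap I_2=\overline{I_1I_2}$ for closed two-sided ideals in a $C^*$-algebra (obtained via an approximate unit of $I_2$: if $T\in I_1\cap I_2$ and $e_\lambda$ is an approximate unit of $I_2$, then $Te_\lambda\to T$ and $Te_\lambda\in I_1I_2$). Since $C_0(W)$ is central in $\mathcal{A}(X,\mathcal{H})$, any $a,b\in\mathcal{A}(X,\mathcal{H})$ satisfy $\mathrm{supp}(ab)\subseteq\mathrm{supp}(a)\cap\mathrm{supp}(b)$; hence the matrix entries of a product $T'T''$ of generators $T'$ of $I_1:=C^*(\ldots)_{O_{R,i_0}}^G$ and $T''$ of $I_2:=C^*(\ldots)_{G\cdot V_{i_0+1}(R)}^G$ have supports contained in $O_{R,i_0}\cap G\cdot V_{i_0+1}(R)$. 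This gives $\overline{I_1 I_2}\subseteq C^*(\ldots)_{O_{R,i_0}\cap G\cdot V_{i_0+1}(R)}^G$, and combined with the trivial reverse inclusion and the identity above we obtain
\[
I_1\cap I_2 \;=\;\overline{I_1 I_2}\;=\;C^*(P_d(G),C(Y)\widehat{\otimes}\mathcal{A}(X,\mathcal{H})\widehat{\otimes} B)_{O_{R,i_0}\cap G\cdot V_{i_0+1}(R)}^G.
\]
For (4), the same argument applies with products taken pointwise in $t$; uniform continuity in $t$ and the propagation-decay condition are preserved under pointwise multiplication, and at each $t$ the supports of $f_1(t)f_2(t)$ lie in the intersection.

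The only genuinely delicate step is the construction of the $G$-equivariant partition of unity $\{\phi_1,\phi_2\}$ on $W$; once that is in place, everything reduces to bookkeeping with supports and the standard $C^*$-algebraic facts about closed ideals. The properness of the $G$-action on $W$ established in Proposition \ref{prop_Action_on_W} is precisely what makes this construction possible.
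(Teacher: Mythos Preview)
Your proposal is correct and follows exactly the standard approach: a $G$-invariant partition of unity coming from properness of the $G/N$-action on $W$ gives the sum decompositions (1)--(2), and the identity $I_1\cap I_2=\overline{I_1 I_2}$ together with centrality of $C_0(W)$ in $\mathcal{A}(X,\mathcal{H})$ gives the intersection formulas (3)--(4). The paper itself omits the proof and simply refers to Lemma~6.3 of \cite{Yu_coa_emb}, whose argument is precisely the one you have written out.
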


The proof of Lemma \ref{decomposeRoe} is similar to that of Lemma 6.3 in \cite{Yu_coa_emb}, and is therefore omitted.

Note that, for any $1 \leq i < i' \leq N$, the intersection $O_R(i)\cap O_R(i')$ is of the same form as $V'\times_{N'} G$ for some open subset $V' \subset W$ and some subgroup $N'\leq G$ containing $N$ with finite index. So by the Mayer--Vietoris sequence and the Five Lemma, we have the following result.

\begin{prop}
Let $1 \to N \to G \to G/N \to 1$ be an extension of countable discrete groups. Assume $N$ and $G/N$ are coarsely embeddable into Hilbert spaces. The evaluation-at-zero map
\begin{equation*}
ev_{\ast}:\displaystyle \lim_{d \to \infty}K_{\ast}(C_L^{\ast}(P_d(G),C(Y)\widehat{\otimes} \mathcal{A}(X, \mathcal{H})\widehat{\otimes}B)_{O_R}^G) \rightarrow \;\;\;\;\;\;\;\;\;\;\;\;\;\;\;\;\;\;\;\;\;\;\;\;\;\;\;\;\;\;\;
\end{equation*}
\begin{equation*}
\;\;\;\;\;\;\;\;\;\;\; \displaystyle \lim_{d \to \infty}K_{\ast}(C^{\ast}(P_d(G),C(Y)\widehat{\otimes} \mathcal{A}(X, \mathcal{H})\widehat{\otimes}B)_{O_R}^G)
 \end{equation*}
is isomorphic for each $R$.
\end{prop}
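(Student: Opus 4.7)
The plan is to proceed by induction on the number of open sets in the finite cover of $\widebar{O_R(e)}$ furnished by the compactness argument preceding Lemma \ref{decomposeRoe}. By that argument, for any fixed $R_0 > R$ one can cover $\widebar{O_{R_0+1}(e)}$ by finitely many open sets $\{V_i(R_0)\}_{i=1}^{N}$, each of the form "$V_i$ is $N_i$-invariant inside $O_{R_0+1}(e)$" for some finite-index overgroup $N_i \supseteq N$ of $G$. Setting $V_i(R) := V_i(R_0) \cap O_R(e)$ and $O_{R,i_0} := \bigcup_{i \leq i_0} G \cdot V_i(R)$, we have $O_R = O_{R, N}$. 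The base case $i_0 = 1$ is exactly Proposition \ref{thm_BC_subgroup} applied to $V_1(R)$.

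For the inductive step, I would combine parts (1)--(4) of Lemma \ref{decomposeRoe} with the standard Mayer--Vietoris six-term exact sequence for $C^*$-algebras of the form $A+B$ with $A$, $B$ closed two-sided ideals in some ambient $C^*$-algebra (cf. the equivariant analogue in Proposition \ref{prop_MV squence_Local}). This produces, for each $d$, a pair of six-term exact sequences linked by the evaluation-at-zero map $ev_*$, one for the localization algebras and one for the Roe algebras, attached to the decomposition
\[
O_{R,i_0+1} = O_{R, i_0} \cup (G \cdot V_{i_0+1}(R)), \qquad
O_{R, i_0} \cap (G \cdot V_{i_0+1}(R)).
\]
Passing to the inductive limit as $d \to \infty$ preserves the exactness. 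By the inductive hypothesis $ev_*$ is an isomorphism on $O_{R,i_0}$, and by Proposition \ref{thm_BC_subgroup} it is an isomorphism on $G \cdot V_{i_0+1}(R)$. The Five Lemma will then give the result for $O_{R,i_0+1}$, provided we can handle the intersection term.

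The key structural observation, and the main obstacle, is that the intersection $G \cdot V_{i_0+1}(R) \cap O_{R,i_0}$ must be expressed, up to a further finite decomposition, in the form $V' \times_{N'} G$ for some open $V' \subset W$ and some finite-index subgroup $N \leq N' \leq G$, so that Proposition \ref{thm_BC_subgroup} applies to it as well. Concretely, $G \cdot V_{i_0+1}(R) \cap G \cdot V_j(R)$ with $j \leq i_0$ decomposes as a disjoint union over double cosets $N_{i_0+1} \backslash G / N_j$ of translates $g \cdot (V_{i_0+1}(R) \cap g^{-1} V_j(R)\cdot g')$; each nonempty summand is invariant under the finite-index subgroup $N_{i_0+1} \cap g N_j g^{-1}$ of $G$, which contains $N$ with finite index because both $N_{i_0+1}$ and $gN_jg^{-1}$ do. Since the collection of such intersection-type open sets is again finite, one may run a nested induction (or refine the cover once at the outset) to reduce to Proposition \ref{thm_BC_subgroup} for every piece that appears.

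Once the intersections are handled, a final invocation of the Five Lemma on the limit diagram completes the induction, yielding that
\[
ev_*: \varinjlim_{d} K_*\bigl(C_L^*(P_d(G), C(Y)\widehat\otimes \mathcal{A}(X, \mathcal{H}) \widehat\otimes B)_{O_R}^G\bigr) \to \varinjlim_{d} K_*\bigl(C^*(P_d(G), C(Y)\widehat\otimes \mathcal{A}(X, \mathcal{H}) \widehat\otimes B)_{O_R}^G\bigr)
\]
is an isomorphism. The main technical point to be careful about is that the Mayer--Vietoris six-term sequence for twisted Roe and twisted localization algebras applies to ideals supported on open subsets of $W$, using Lemma \ref{decomposeRoe}(1)--(4), and that the connecting maps are natural with respect to $ev_*$; this naturality, together with the inductive application of Proposition \ref{thm_BC_subgroup} on each piece and each intersection, is what makes the Five Lemma go through.
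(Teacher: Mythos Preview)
Your proposal is correct and follows essentially the same route as the paper: induct on the number of pieces $G\cdot V_i(R)$ in the finite cover, use Lemma~\ref{decomposeRoe} to set up a Mayer--Vietoris sequence natural in $ev_*$, invoke Proposition~\ref{thm_BC_subgroup} for the single-piece terms, and close with the Five Lemma. Your treatment of the intersection term is in fact more careful than the paper's, which simply asserts that $G\cdot V_i(R)\cap G\cdot V_{i'}(R)$ is again of the form $V'\times_{N'}G$; your double-coset decomposition makes explicit why each piece is invariant under a subgroup $N'\supseteq N$ of finite index, and why only finitely many pieces occur.
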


By taking the direct limit over $R$, we have the twisted Baum--Connes conjecture with coefficients in
$C(Y) \widehat{\otimes}\mathcal{A}(X, \mathcal{H})\widehat{\otimes} B$.

\begin{proof}[The proof of Theorem \ref{thm_twist_BC}]
Since we have
$$
C_L^*(P_d(G), C(Y)\widehat{\otimes}\mathcal{A}(X, \mathcal{H})\widehat{\otimes}B)^G=\lim_{R\to \infty}C_L^*(P_d(G), C(Y)\widehat{\otimes}\mathcal{A}(X, \mathcal{H})\widehat{\otimes}B)_{O_R}^G,
$$
and
$$
C_L^*(P_d(G), C(Y)\widehat{\otimes}\mathcal{A}(X, \mathcal{H})\widehat{\otimes}B)^G=\lim_{R\to \infty}C_L^*(P_d(G), C(Y)\widehat{\otimes}\mathcal{A}(X, \mathcal{H})\widehat{\otimes}B)_{O_R}^G
,$$
it suffices to show that the map
$$ ev_*: \displaystyle{\lim_{d \to \infty}K_*(D_{d, L,R_0}) \to \lim_{d \to \infty}K_*(D_{d, R_0})},$$
is an isomorphism,
where we set $$D_{d, L, R_0}=\lim_{R<R_0, R \to R_0}C_L^*(P_d(G), C(Y)\widehat{\otimes}\mathcal{A}(X, \mathcal{H})\widehat{\otimes}B)_{O_R}^G$$ and
$$D_{d, R_0}=\lim_{R<R_0, R \to R_0}C^*(P_d(G), C(Y)\widehat{\otimes}\mathcal{A}(X, \mathcal{H})\widehat{\otimes}B)_{O_R}^G$$
for brevity.
By Lemma \ref{decomposeRoe} and $O_R=\bigcup_i G\cdot V_i(R)$, the proof is completed using the Mayer--Vietoris sequence and the Five Lemma.
\end{proof}

 \section{Proof of the main theorem}

 In this section, we will prove the main result of this paper. We are going to define a geometric analogue of the infinite-dimensional Bott map introduced by Higson, Kasparov and Trout in \cite{KaHiTr}, and then prove the Novikov conjecture from the twisted Baum--Connes conjecture.

 Let $X$ be the compact space and $\big(\mathcal{H}_x\big)_{x \in X}$ the continuous field of Hilbert spaces defined in Section 4. Every element in $C(X) \widehat{\otimes} \mathcal{S}$ can be viewed as a continuous $\mathcal{S}$-valued map. The action of $G$ on $C(X)\widehat{\otimes} \mathcal{S}$ is given by $g\cdot \big(f_x\big)_{x \in X}=\big(f'_x\big)_{x \in X}$, where $f'_x=f_{xg^{-1}}$.

In Section 3, we obtain the fiber-wise defined Bott map $\beta_t: C(X)\widehat{\otimes} \mathcal{S} \to \mathcal{A}(X, \mathcal{H})$, by $$\beta_t((f_x)_{x \in X})=(\beta_t(f_x))_{x \in X}$$
where $f_x \in \mathcal{S}$, for any $x \in X$. It induces an isomorphism on the $K$-theory level.

Each element $T \in C^{\ast}(P_d(G), G, C(Y) \widehat{\otimes}C(X) \widehat{\otimes}B) \widehat{\otimes} \mathcal{S}$ can be expressed as
$$T=\big(T_{y, z}\big)_{y, z \in Z_d} \widehat{\otimes}f,$$
where $f \in \mathcal{S}$, $T_{y, z} \in C(Y) \widehat{\otimes}C(X)\widehat{\otimes} B \widehat{\otimes} K_G$, for all $y, z \in Z_d$.

We define the Bott map between the algebraic Roe algebras and algebraic twisted localization algebras,
$$\beta: C_{alg}^{\ast}(P_d(G), G, C(Y)\widehat{\otimes}C(X)\widehat{\otimes}B) \widehat{\otimes} \mathcal{S} \rightarrow C_{alg}^{\ast}(P_d(G),C(Y)\widehat{\otimes} \mathcal{A}(X, \mathcal{H})\widehat{\otimes}B)^G,$$
by $$\beta_t (T)(y, z)=T_{y, z}\widehat{\otimes} s_{y, z},$$
where $s_{y, z} \in \mathcal{A}(X, \mathcal{H})$ is a section with $s_{y, z}(x)=\beta_t( b(x, J(y))(f)$, and $$\beta_t(b(x, J(y))): \mathcal{A}(\{b(x, J(y))\})\to \mathcal{A}(\mathcal{H}_x)$$
is the Bott map induced by the inclusion $\{b(x, J(y))\} \to \mathcal{H}_x$, for $t \in [0, \infty)$.

We are going to show that the Bott map $\beta_t$ is well-defined. It suffices to show that, for each
$T\widehat{\otimes} f=\big(T_{y, z} \big)_{y, z \in Z_r} \widehat{\otimes} f \in C_{alg}^{\ast}(P_d(G), G, C(Y)\widehat{\otimes}C(X)\widehat{\otimes}B) \otimes \mathcal{S}$ ,
$\beta_t(T \widehat{\otimes}f)$ is $G$-invariant in the sense that $g(\beta_t(T\widehat{\otimes}f)_{g^{-1}y, g^{-1}z})=\beta_t(T\widehat{\otimes}f)_{y, z}$ for all $g \in G$, $y, z \in Z_d$.

Let $s_{g}: X \to \mathcal{H}=\bigsqcup_{x \in X} \mathcal{H}_x$ be the continuous section defined by $s_g(x)=b(x, g)$, for all $g \in G$. By Lemma \ref{lem,diagr_for_action}, we have that
 $\beta(g \cdot s)=g \cdot\beta(s)$ for all $s\in \mathcal{A}(X, \mathcal{ H})$. For each $g \in G$, $y, z \in Z_d$,
 it follows from the definition of $T \widehat{\otimes}f$ that
 $$g(T_{g^{-1}y, g^{-1}z})=T_{y, z}.$$
So we need to show:
$$g \cdot( \beta_t(T\widehat{\otimes}f)_{g^{-1}y, g^{-1}z})=\beta_t(T \widehat{\otimes}f)_{y, z}.$$
On the one hand,
$$
\beta_t(T \widehat{\otimes}f)_{y, z} = T_{y, z} \widehat{\otimes} \beta_t(s_{J(z)})(f).
$$
On the other hand, we have
$$
g \cdot T_{g^{-1}y, g^{-1}z} \widehat{\otimes} g \cdot(\beta_t(s_{J(g^{-1}y)})(f))=T_{y, z} \widehat{\otimes} g \cdot(\beta_t(s_{J(g^{-1}y)})(f)).
$$

It suffices to show that $g \cdot(\beta_t(s_{J(g^{-1}y)})(f))=\beta(s_{J(y)})(f)$.
For each $x \in X$, we have $\beta(s_{J(y)})(f)(x)=\beta (b(x, J(y)))(f) \in \mathcal{A}(\mathcal{H}_x)$.

 For the section $s_{J(y)}$, we have
\begin{align*}
 (g \cdot s_{J(g^{-1}y)})(x) &=g(s_{J(g^{-1}y)}(xg))\\
 &=g(b(xg, g^{-1}J(y)))\\
 &=b(x, J(y))\\
 &=s_{J(y)}(x),
 \end{align*}
 for all $x \in X$. Therefore $g \cdot s_{J(g^{-1}y)}=s_{J(y)}$, for all $y \in Z_d$, and hence it follows that $$g(\beta(s_{J(g^{-1}y)})(f))=\beta_t{s_{J(y)}}(f).$$
 Therefore,
 $$g(\beta_t (T\widehat{\otimes}f)_{g^{-1}y, g^{-1}z})=  \beta_t(T\widehat{\otimes}f)_{y, z}.$$
 As a consequence, the map
$$\beta_t: C_{alg}^*(P_d(G), G, C(Y)\widehat{\otimes}C(X)) \widehat{\otimes} \mathcal{S}\to C_{alg}^*(P_d(G), C(Y)\widehat{\otimes}\mathcal{A}(X, \mathcal{H}))^G$$
for all $t \in [1, \infty)$ is well-defined.

\begin{prop}
  The family of maps $$\beta_t: C_{alg}^*(P_d(G), G, C(Y)\widehat{\otimes}C(X)\widehat{\otimes}B) \widehat{\otimes} \mathcal{S}\to C_{alg}^*(P_d(G), C(Y)\widehat{\otimes}\mathcal{A}(X, \mathcal{H})\widehat{\otimes}B)^G$$
 for $t\in [1, \infty)$ extends to an asymptotic morphism
  $$\beta: C^*(P_d(G), G, C(Y)\widehat{\otimes}C(X)\widehat{\otimes}B) \widehat{\otimes} \mathcal{S}\to C^*(P_d(G), C(Y)\widehat{\otimes}\mathcal{A}(X, \mathcal{H})\widehat{\otimes}B)^G.$$
\end{prop}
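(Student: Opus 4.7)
The plan is to verify the three defining properties of an asymptotic morphism for $\{\beta_t\}_{t\in[1,\infty)}$ on the algebraic core and extend by a uniform-in-$t$ boundedness estimate: continuity and uniform boundedness in $t$, asymptotic linearity and $\ast$-compatibility, and asymptotic multiplicativity.

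First I would establish uniform boundedness. For each $t$, $y\in Z_d$, and $x\in X$, the assignment $f\mapsto \beta_t(b(x,J(y)))(f)$ is the functional calculus of the essentially self-adjoint multiplier $t^{-1}\bigl(X\widehat\otimes 1 + 1\widehat\otimes(C-b(x,J(y)))\bigr)$ on $\mathcal{S}\widehat\otimes \mathrm{Cliff}(\mathcal{H}_x)$, hence a $\ast$-homomorphism $\mathcal{S}\to \mathcal{A}(\mathcal{H}_x)$ and in particular norm-decreasing: $\|s_{y,z}(x)\|\le \|f\|_{\mathcal{S}}$. Continuity of the cocycle section $x\mapsto b(x,J(y))$ in $\Theta(X,\mathcal{H})$, together with continuity of the Bott map in its basepoint, guarantees $s_{y,z}\in \mathcal{A}(X,\mathcal{H})$ with $\|s_{y,z}\|\le \|f\|$. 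The matrix $\bigl(T_{y,z}\widehat\otimes s_{y,z}\bigr)$ inherits finite propagation and the uniform row/column bound $L$ from $T$; a standard estimate for operators on the Hilbert module $\ell^2(Z_d)\widehat\otimes C(Y)\widehat\otimes \mathcal{A}(X,\mathcal{H})\widehat\otimes B\widehat\otimes K_G$ then yields $\|\beta_t(T\widehat\otimes f)\|\le \|T\|\,\|f\|$, uniformly in $t$. Continuity of $t\mapsto \beta_t(T\widehat\otimes f)$ reduces to norm continuity of $t\mapsto f_t$ in $\mathcal{S}$, which is elementary. Linearity and $\ast$-compatibility in fact hold \emph{exactly} for each $t$, from the corresponding properties of the fiberwise $\ast$-homomorphism.

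The crucial step is asymptotic multiplicativity. For $a_i=T_i\widehat\otimes f_i$ with common propagation bound $R$, a direct computation shows the $(y,z)$-entry of $\beta_t(a_1a_2)-\beta_t(a_1)\beta_t(a_2)$ equals
\begin{equation*}
\sum_w (T_1)_{y,w}(T_2)_{w,z}\widehat\otimes \Delta^t_{y,w},\qquad
\Delta^t_{y,w}(x)=\beta_t(b(x,J(y)))(f_1)\bigl[\beta_t(b(x,J(y)))(f_2)-\beta_t(b(x,J(w)))(f_2)\bigr].
\end{equation*}
Finite propagation ensures $d(y,w)\le R$ whenever $(T_1)_{y,w}\ne 0$; the cocycle identity $b(x,J(w))-b(x,J(y))=U_{(x,J(y))}b(xJ(y),J(y)^{-1}J(w))$ and continuity of $b$ over the compact space $X$ then yield a constant $M_R$ with $\sup_{x\in X}\|b(x,J(y))-b(x,J(w))\|_{\mathcal{H}_x}\le M_R$. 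The Higson--Kasparov--Trout comparison estimate for Bott maps at shifted basepoints, applied fiberwise, then gives $\|\Delta^t_{y,w}(x)\|\le \|f_1\|\cdot \omega_{f_2}(M_R/t)$ with $\omega_{f_2}(s)\to 0$ as $s\to 0$, uniformly in $x\in X$ and in admissible pairs $(y,w)$. Combined with the uniform row/column bound on $T_1,T_2$, this forces $\|\beta_t(a_1a_2)-\beta_t(a_1)\beta_t(a_2)\|\to 0$ on the dense subalgebra; uniform boundedness then extends the asymptotic morphism to the full $C^\ast$-closure.

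The main obstacle is obtaining the HKT comparison estimate \emph{uniformly} over the continuous-field base $X$, since the classical HKT argument concerns a fixed Hilbert space. The resolution is that for each $x\in X$ both basepoints $b(x,J(y))$ and $b(x,J(w))$ lie in the \emph{same} fiber $\mathcal{H}_x$, so the classical bound applies fiberwise with constants depending only on Schwartz seminorms of $f_2$ and on $\|b(x,J(y))-b(x,J(w))\|_{\mathcal{H}_x}$; compactness of $X$ and continuity of the cocycle section $b$ then upgrade this to the required uniformity in $x$. Once this uniform comparison is in place the remaining asymptotic morphism axioms are routine, completing the extension of $\beta$ to a bona fide asymptotic morphism on the full $C^\ast$-closures.
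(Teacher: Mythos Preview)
Your approach is correct and follows essentially the same strategy as the paper: obtain a cross-norm bound on simple tensors, verify the asymptotic $*$-homomorphism axioms on the algebraic level, and extend to the $C^*$-completion. In fact, your treatment of asymptotic multiplicativity via the Higson--Kasparov--Trout basepoint-comparison estimate is considerably more explicit than the paper's, which simply asserts that ``one can verify'' this using properness of the $G/N$-action.

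Two small points of comparison. First, the paper obtains the bound $\|\beta_t(T\widehat\otimes g)\|\le\|T\|\|g\|$ more cleanly by observing the factorization $\beta_t(T\widehat\otimes g)=N_{g_t}(T\widehat\otimes 1)$, where $N_g$ is the diagonal left multiplier on the Hilbert module $E$ given by $N_g\bigl(\sum a_z[z]\bigr)=\sum(\beta(s(J(z)))(g)\widehat\otimes 1)a_z[z]$; this avoids any row/column estimate. Second, your final sentence (``uniform boundedness then extends the asymptotic morphism to the full $C^*$-closure'') is a little quick: the cross-norm inequality on simple tensors does not by itself bound $\beta_t$ on general elements of the algebraic tensor product. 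The paper handles this by noting that the asymptotic $*$-homomorphism property forces $\beta_t$ to factor through the \emph{maximal} tensor product $C^*(P_d(G),G,C(Y)\widehat\otimes C(X)\widehat\otimes B)\widehat\otimes_{\max}\mathcal S$, and then invokes nuclearity of $\mathcal S$ to identify this with the spatial tensor product. You should make this step explicit.
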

\begin{proof}
  Let
  $$E=\left\{\sum_{z \in Z_d}a_z [z]: a_z \in C(Y)\widehat{\otimes}\mathcal{A}(X, \mathcal{H})\widehat{\otimes}B\widehat{\otimes}K_G, \sum_{z \in Z_d}a_z^*a_z \text{~converges~in~norm}\right\}.$$
For every $g \in \mathcal{S}$, define a bounded module homomorphism $N_g: E \to E$ given by
$$
N_g\left(\sum_{z \in Z_d}a_z [z]\right)=\sum_{z \in Z_d}(\beta(s(J(z)))(g)\widehat{\otimes}1) a_z [z]
$$
for all $\sum_{z \in Z_d}a_z [z]$.
It is easy to check that
$$\beta_t(T \widehat{\otimes}g)=N_{g_t}(T \widehat{\otimes}1)$$
for all $g \in \mathcal{S}$ and $T \in C^*_{alg}(P_d(G), G, C(Y)\widehat{\otimes} C(X)\widehat{\otimes}B)$, where $T \widehat{\otimes}1$ is a bounded module homomorphism from $E$ to $E$ by
$$
(T \widehat{\otimes}1)\left(\sum_{z \in Z_d}a_z [z]\right)=\sum_{y \in Z_d} \left(\sum_{z}(T_{y, z}\widehat{\otimes}1)a_z\right)[y].
$$
By the definition of Roe algebras, the map $\beta_t$ extends to a linear map
$$C^*(P_d(G), G, C(Y) \widehat{\otimes}C(X)\widehat{\otimes}B)\widehat{\otimes}_{alg}\mathcal{S} \to C^*(P_d(G), C(Y)\widehat{\otimes}\mathcal{A}(X, \mathcal{H})\widehat{\otimes}B)^G$$
satisfying
$$\|\beta_t(T \widehat{\otimes}g)\|\leq \|g\|\|T\|$$
for all $g \in \mathcal{S}$, and $T \in C_{alg}^*(P_d(G), G, C(Y)\widehat{\otimes}C(X)\widehat{\otimes}B)$.
By the definition of $$C^*(P_d(G), G, C(Y)\widehat{\otimes} C(X) \widehat{\otimes}B)$$ and the properness of the $G/N$-action on the $C^*$-algebra $C(Y)\widehat{\otimes}\mathcal{A}(X, \mathcal{H})\widehat{\otimes}B$, one can verify that $\beta_t$ is an asymptotic morphism from $C^*(P_d(G), G, C(Y) \widehat{\otimes}C(X)\widehat{\otimes} B)\widehat{\otimes}_{alg}\mathcal{S}$ to $C^*(P_d(G), C(Y)\widehat{\otimes}\mathcal{A}(X, \mathcal{H})\widehat{\otimes}B)^G$. Hence $\beta_t$ extends to a homomorphism
$$C^*(P_d(G), G, C(Y) \widehat{\otimes}C(X)\widehat{\otimes}B)\widehat{\otimes}_{\operatorname{max}}\mathcal{S} \to C^*(P_d(G), C(Y)\widehat{\otimes}\mathcal{A}(X, \mathcal{H})\widehat{\otimes}B)^G.$$
As a consequence of the nuclearity of $\mathcal{S}$, $\beta_t$ extends to a homomorphism
\begin{align*}
C^*(P_d(G), G, C(Y) \widehat{\otimes}C(X)\widehat{\otimes}B)\widehat{\otimes}\mathcal{S}&\to
C^*(P_d(G), C(Y)\widehat{\otimes}\mathcal{A}(X, \mathcal{H})\widehat{\otimes}B)^G.\qedhere
\end{align*}
\end{proof}
We define the Bott map on $K$-theory,
$$\beta_*: K_*(C^*(P_d(G), G, C(Y)\widehat{\otimes}C(X)\widehat{\otimes}B) \widehat{\otimes} \mathcal{S}))\to K_*(C^*(P_d(G), C(Y)\widehat{\otimes}\mathcal{A}(X, \mathcal{H})\widehat{\otimes}B)^G),$$
as that induced by the asymptotic morphism
$$\beta_t: C^*(P_d(G), G, C(Y)\widehat{\otimes}C(X)\widehat{\otimes}B) \widehat{\otimes} \mathcal{S})\to C^*(P_d(G), C(Y)\widehat{\otimes}\mathcal{A}(X, \mathcal{H})\widehat{\otimes}B)^G$$
for $t \in [1, \infty)$.

Similarly,  we can define the localized version of the asymptotic morphism
$$\beta_{L,t}: C_L^*(P_d(G), G, C(Y)\widehat{\otimes}C(X)\widehat{\otimes}B) \widehat{\otimes} \mathcal{S}\to C_L^*(P_d(G), C(Y)\widehat{\otimes}\mathcal{A}(X, \mathcal{H})\widehat{\otimes}B)^G.$$
for $t \in [1, \infty)$.
Since this is an asymptotic morphism, it induces the Bott map on $K$-theory
$$\beta_{L,*}: K_*(C_L^*(P_d(G), G, C(Y)\widehat{\otimes}C(X)\widehat{\otimes}B) \widehat{\otimes} \mathcal{S})\to K_*(C_L^*(P_d(G), C(Y)\widehat{\otimes}\mathcal{A}(X, \mathcal{H})\widehat{\otimes}B)^G).$$
Note that, for every $d>0$, the following diagram
\begin{equation*}
\xymatrix{
	K_{\ast}(C^{\ast}_L(P_d(G), G, C(Y)\widehat{\otimes}C(X)\widehat{\otimes}B)\widehat{\otimes} \mathcal{S})\ar[d]^{\beta_{L,*}}\ar[r]^{ev_*}& K_{\ast}(C^*(P_d(G), G, C(Y)\widehat{\otimes}C(X)\widehat{\otimes}B)
\widehat{\otimes}\mathcal{S})\ar[d]^{\beta_*}\\
	K_{\ast}(C^{\ast}_L(P_d(G), C(Y)\widehat{\otimes}\mathcal{A}(X, \mathcal{H})\widehat{\otimes}B)^G)\ar[r]^{ev_*}& K_{\ast}(C^*(P_d(G), C(Y)\widehat{\otimes} \mathcal{A}(X, \mathcal{H})\widehat{\otimes}B)^G).
}
\end{equation*}
is commutative.

Let $C^*_L(P_d(G), C(Y) \widehat{\otimes} \mathcal{A}(X, \mathcal{H})\widehat{\otimes}B)^G$ be the twisted localization algebra. Let $C_1\subset C_2 \subset P_d(G)$ be $G$-invariant closed subsets such that $C_i=\widebar{C_i\cap Z_d}$ for $i=1, 2$. Assume that the inclusion map $i: C_1 \to C_2$ is a strong Lipschitz homotopy equivalence. Let $C^*_L(C_i, C(Y) \widehat{\otimes} \mathcal{A}(X, \mathcal{H})\widehat{\otimes}B)^G$ be the $C^*$-subalgebra consisting of all the operators $T \in C^*_L(P_d(G), C(Y) \widehat{\otimes} \mathcal{A}(X, \mathcal{H})\widehat{\otimes}B)^G$ with support in $C_i \times C_i \times W \subset Z_d \times Z_d \times W $, for $i=1,2$.

Then the map $i: C_1 \hookrightarrow C_2$ induces a map on the twisted Roe algebras
$$i_*: C^*(C_1, C(Y) \widehat{\otimes} \mathcal{A}(X, \mathcal{H})\widehat{\otimes}B)^G \to C^*(C_2, C(Y) \widehat{\otimes} \mathcal{A}(X, \mathcal{H})\widehat{\otimes}B)^G$$
by

\[ i(T)_{y, z} =
 \begin{cases*}
    T_{y, z} \quad& if $y, z \in C_1$,\\
    0 & otherwise,  \\
\end{cases*}\]
for all $T=(T_{y, z})_{y, z \in C_1} \in C^*(C_1, C(Y) \widehat{\otimes} \mathcal{A}(X, \mathcal{H})\widehat{\otimes}B)^G$.
This map is well-defined, because the inclusion map $i: C_1 \hookrightarrow C_2$ is isometric. Similarly, one can define a homomorphism between the localization algebras
$$i_L: C^*_L(C_1, C(Y) \widehat{\otimes} \mathcal{A}(X, \mathcal{H})\widehat{\otimes}B)^G \to C^*_L(C_2, C(Y) \widehat{\otimes} \mathcal{A}(X, \mathcal{H})\widehat{\otimes}B)^G.$$
The following result is a twisted analogue of the result that the $K$-theory of twisted localization algebras is invariant under strong Lipschitz homotopy equivalence. The proof is similar to that of Proposition \ref{thm_Lip_hom_inva}.
\begin{lem}\label{thm_twisted_strongly_homo_invar}
Assume the inclusion map $i: C_1 \hookrightarrow C_2$ is a strong Lipschitz homotopy equivalence. The map
$$(i_L)_*: K_*(C^*_L(C_1, C(Y) \widehat{\otimes} \mathcal{A}(X, \mathcal{H})\widehat{\otimes}B)^G) \to K_*(C^*_L(C_2, C(Y) \widehat{\otimes} \mathcal{A}(X, \mathcal{H})\widehat{\otimes}B)^G)$$
 induced by $i_L$ on $K$-theory is an isomorphism.
\end{lem}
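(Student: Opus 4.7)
The plan is to adapt the proof of Proposition \ref{thm_Lip_hom_inva} to the twisted setting. Given a strong Lipschitz homotopy inverse $f: C_2 \to C_1$ of $i$, I would construct a uniformly continuous family of isometries $V_f(t)$ covering $f$ by the recipe in Section 2: a sequence of $G$-equivariant isometries $V_k$ between the Hilbert modules underlying the twisted Roe algebras with $\text{supp}(V_k) \subset \{(y, z) : d(y, f(z)) \leq \epsilon_k\}$ for some $\epsilon_k \to 0$, interpolated by the rotations $R(t-k)$. Conjugation by $V_f(t)$ produces a candidate map $\text{Ad}(V_f(t))$ between the unitizations of the twisted localization algebras, and the goal is to show that it induces an inverse to $(i_L)_*$ on $K$-theory.

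The first main step is to verify that $\text{Ad}(V_f(t))$ actually lands in $C^*_L(C_1, C(Y) \widehat{\otimes} \mathcal{A}(X, \mathcal{H}) \widehat{\otimes} B)^G$. The usual properties ($G$-invariance, finite propagation, local compactness) follow as in Proposition \ref{Prop_Roe_coarse_invar}. The new subtlety is the open-set support condition $\text{supp}(T_{y,z}) \subset O_{r_2}(J(y))$ defining the twisted Roe algebra. The entries $\text{Ad}(V_f(t))(T)_{y,z}$ arise from entries $T_{y', z'}$ with $d(y, f(y'))$ and $d(z, f(z'))$ at most $\epsilon_k$, so $J(y)$ and $J(y')$ lie at distance bounded by some $L_k$ in $G$ (because $J$ is $G$-equivariant and $Z_d = G \cdot \Delta_d$ with $\Delta_d$ bounded). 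The cocycle identity $b(x, J(y')) - b(x, J(y)) = U_{(x, J(y))}\, b(xJ(y), J(y)^{-1} J(y'))$, combined with the uniform bound $\sup_{x \in X}\sup_{|\gamma| \leq L_k}\|b(x, \gamma)\| =: \delta_k < \infty$ (from continuity of $\phi$ on $X \times G$ and compactness of $X$), yields $O_{r_2}(J(y')) \subset O_{r_2 + \delta_k}(J(y))$. Thus the support condition is preserved at the cost of enlarging $r_2$ by $\delta_k$, and $\text{Ad}(V_f(t))$ is well defined at the level of twisted localization algebras.

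Once compatibility with the twist is established, the Eilenberg-swindle argument of Proposition \ref{thm_Lip_hom_inva} transfers verbatim. Using the strong Lipschitz homotopy $F(\cdot, t)$ between $f \circ i$ and $\text{id}_{C_1}$, a suitable double sequence $\{t_{i, j}\}$, and the elements $a$, $b$, $c$ formed from $\bigoplus_{k} \text{Ad}(V_{F(\cdot, t_{k, i})}(t))(u) v^{-1}$ (with Remark \ref{rem_localization_infinite_plus_mod} accommodating the infinite direct sum by stabilizing the admissible Hilbert module), one shows that $\text{Ad}(V_{f \circ i}(t))_* = \text{id}$ on $K_*(C^*_L(C_1, C(Y) \widehat{\otimes} \mathcal{A}(X, \mathcal{H}) \widehat{\otimes} B)^G)$. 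The symmetric computation for $i \circ f$ gives that $\text{Ad}(V_{i \circ f}(t))_* = \text{id}$ on $K_*(C^*_L(C_2, C(Y) \widehat{\otimes} \mathcal{A}(X, \mathcal{H}) \widehat{\otimes} B)^G)$, so $(i_L)_*$ is an isomorphism with inverse $\text{Ad}(V_f(t))_*$. The main obstacle is precisely the compatibility check in the second paragraph: the twisted Roe algebra requires matrix entries to be supported in particular open subsets of $W$ that vary with the vertex $y$, and controlling how $\text{Ad}(V_f(t))$ shuffles these open sets needs the quantitative continuity of the cocycle $b$ on $X \times G$ supplied by Proposition \ref{prop_proper_CND_on_groupoid}. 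Everything else is a direct transcription of the untwisted argument.
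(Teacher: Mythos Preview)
Your proposal is correct and follows exactly the route the paper indicates: it simply says the proof is ``similar to that of Proposition \ref{thm_Lip_hom_inva}'' and gives no details, so your adaptation of the Eilenberg-swindle argument with the covering isometries $V_f(t)$ is precisely what is intended. You go further than the paper by isolating the one genuinely new point---that conjugation by $V_f(t)$ must preserve the support condition $\text{supp}(T_{y,z}) \subset O_{r_2}(J(y))$ up to enlarging $r_2$---and your verification via the cocycle identity and the uniform bound $\sup_{x \in X}\|b(x,\gamma)\|<\infty$ for $|\gamma|$ bounded is the right one. One small step you leave implicit: from $d(y, f(y')) \leq \epsilon_k$ you need $d(y, y')$ bounded to control $d_G(J(y), J(y'))$, and this uses that $\sup_{y'}d(y', f(y'))<\infty$, which follows from the uniform continuity in $t$ of the strong Lipschitz homotopy $F$ between $i\circ f$ and $\mathrm{id}_{C_2}$.
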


Similarly, we have a twisted version of the Mayer--Vietoris sequence for twisted localization algebras, as in Proposition \ref{prop_MV squence_Local}. The proof will be similar, so it is omitted.
\begin{lem}\label{lem_twisted_MV squence_Local}
Let $\Delta$ be a simplicial complex endowed with the spherical metric, and let $G$ be a countable discrete group. Assume $G$ acts on $\Delta$ properly by isometries. Let $C_1, C_2\subset \Delta$ be $G$-invariant simplicial subcomplexes endowed with the subspace metric. Then we have the following six-term exact sequence

\begin{equation*}
\begin{tikzcd}
	K_0(L_{C_1\cap C_2})\arrow[r] & K_0(L_{C_1}) \oplus K_0(L_{C_2})  \arrow[r] & K_0(L_{C_1\cup C_2})\arrow[d]\\
	K_1(L_{C_1\cap C_2})\arrow[u]\arrow[r] & K_1(L_{C_1}) \oplus K_1(L_{C_2})\arrow[r]& K_1(L_{C_1\cup C_2}),
\end{tikzcd}
\end{equation*}
where we set
 $L_{C_1}=C_L^*(X_1, C(Y) \widehat{\otimes} \mathcal{A}(X, \mathcal{H})\widehat{\otimes}B)^G$, $L_{C_2}=C_L^*(X_1, C(Y) \widehat{\otimes} \mathcal{A}(X, \mathcal{H})\widehat{\otimes}B)^G$, $L_{C_1\cap C_2}=C_L^*(X_1\cap X_1, C(Y) \widehat{\otimes} \mathcal{A}(X, \mathcal{H})\widehat{\otimes}B)^G$ and $L_{C_1 \cup C_2}=C_L^*(X_1 \cup C_2, C(Y) \widehat{\otimes} \mathcal{A}(X, \mathcal{H})\widehat{\otimes}B)^G$
 for short.
\end{lem}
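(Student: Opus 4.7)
The plan is to mirror the proof of the untwisted Mayer--Vietoris sequence in Proposition \ref{prop_MV squence_Local} (the HRY template), with cosmetic adjustments to accommodate the twisted support condition. View all four algebras $L_{C_1}$, $L_{C_2}$, $L_{C_1 \cap C_2}$, $L_{C_1 \cup C_2}$ as $C^*$-subalgebras of the ambient twisted localization algebra $C^{\ast}_L(\Delta, C(Y) \widehat{\otimes} \mathcal{A}(X, \mathcal{H}) \widehat{\otimes} B)^G$, cut down by the obvious support conditions on the spatial variables. The goal is to produce a short exact sequence
\[
0 \longrightarrow L_{C_1 \cap C_2} \xrightarrow{\;a \mapsto (a, -a)\;} L_{C_1} \oplus L_{C_2} \xrightarrow{\;(a, b) \mapsto a + b\;} L_{C_1 \cup C_2} \longrightarrow 0
\]
and feed it into the standard six-term exact sequence in $K$-theory.

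Two algebraic identities must be established. First, I would verify $L_{C_1} \cap L_{C_2} = L_{C_1 \cap C_2}$. This is immediate: an operator $(T_{y,z})$ whose spatial support lies in $C_1 \times C_1$ and also in $C_2 \times C_2$ satisfies $T_{y,z} = 0$ unless $y, z \in C_1 \cap C_2$, and the reverse inclusion is tautological.

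The second, and main, identity is that $L_{C_1} + L_{C_2}$ is dense in $L_{C_1 \cup C_2}$. To prove this I would choose a $G$-equivariant continuous partition of unity $\{\phi_1, \phi_2\}$ on $C_1 \cup C_2$ subordinate to the cover $\{C_1, C_2\}$, normalized so that $\phi_1^2 + \phi_2^2 = 1$. Given $T = (T(t))_{t \geq 0} \in L_{C_1 \cup C_2}$, write
\[
T(t) = M_{\phi_1} T(t) M_{\phi_1} + M_{\phi_2} T(t) M_{\phi_2} + R(t),
\]
where $M_{\phi_i}$ denotes spatial multiplication by $\phi_i$ and $R(t)$ collects the commutator remainders $[M_{\phi_i}, T(t)]M_{\phi_i}$. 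Since $\mathrm{propagation}(T(t)) \to 0$ and the $\phi_i$ are uniformly continuous on the $G$-cocompact set $C_1 \cup C_2$, we have $\|R(t)\| \to 0$ as $t \to \infty$. After a routine modification near $t = 0$ (cutting off against a function $\chi(t)$ that vanishes near $0$ and approaches $1$ at infinity, together with an exact splitting of $T(t)$ for small $t$, which is permissible because each fixed $T(t)$ has finite propagation and so can be honestly split up to a small norm error), the path $T$ lies in the closure of $L_{C_1} + L_{C_2}$. The key point is that $M_{\phi_i}$ acts only on the spatial coordinates $y, z \in Z_d$ and commutes with the coefficient algebra $C(Y) \widehat{\otimes} \mathcal{A}(X, \mathcal{H}) \widehat{\otimes} B$, so the twisted support condition $\mathrm{supp}(T_{y,z}) \subset O_r(J(y))$ is preserved exactly.

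Together with (i), the density in (ii) yields the displayed short exact sequence, and the associated six-term sequence in $K$-theory is the assertion of the lemma. The main obstacle is the norm-vanishing of the commutator remainders $R(t)$; this is precisely the point where the localization condition $\mathrm{propagation}(T(t)) \to 0$ is indispensable, as the analogous decomposition at the level of twisted Roe algebras (without the propagation-to-zero condition) would only yield a sum of ideals rather than an exact sequence. Once the commutator estimate is in hand, the twisted refinement over Proposition \ref{prop_MV squence_Local} requires no further work, since the cut-off operators $M_{\phi_i}$ interact only with the $Z_d$-variables and leave the $W$-fibre structure encoded by $C(Y) \widehat{\otimes} \mathcal{A}(X, \mathcal{H}) \widehat{\otimes} B$ untouched.
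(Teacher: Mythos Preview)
Your proposal is correct and follows exactly the route the paper intends: the paper omits the proof entirely, stating only that it is ``similar'' to the untwisted Proposition \ref{prop_MV squence_Local} (the HRY template), and your outline reproduces that template while correctly observing that the spatial cut-offs $M_{\phi_i}$ commute with the coefficient algebra and therefore preserve the twisted support condition $\mathrm{supp}(T_{y,z}) \subset O_r(J(y))$. One small caveat: since $C_1, C_2$ are closed subcomplexes rather than open sets, the phrase ``partition of unity subordinate to $\{C_1, C_2\}$'' should be read via the simplicial structure (e.g.\ define $\phi_i$ on vertices and extend affinely, or pass to small open neighborhoods), but this is the standard manoeuvre from \cite{HRY} and \cite{Yu_localization} and does not affect your argument.
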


\begin{prop}
For each $d>0$, the map
$$\beta_{L,*}: K_*(C_L^*(P_d(G), G, C(Y)\widehat{\otimes}C(X)\widehat{\otimes}B) \widehat{\otimes} \mathcal{S})\to K_*(C_L^*(P_d(G), C(Y)\widehat{\otimes}\mathcal{A}(X, \mathcal{H})\widehat{\otimes}B)^G)$$
is an isomorphism.
\end{prop}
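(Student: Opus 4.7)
The plan is to perform a Mayer--Vietoris induction on the simplicial structure of $P_d(G)$ in order to reduce the proposition to the zero-dimensional case $\Delta = G/F$ for a finite subgroup $F \leq G$, where the Bott map can be identified with the equivariant infinite-dimensional Bott isomorphism of Theorem \ref{thm,eq_Bott_isom_fini_gro}. Since $P_d(G)$ is the union of its $G$-invariant cocompact subcomplexes and both sides of $\beta_{L,*}$ commute with the corresponding inductive limits, it suffices to prove the statement after replacing $P_d(G)$ by any such finite-dimensional $G$-invariant cocompact subcomplex $\Delta$.

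For such a $\Delta$, I argue by induction on the dimension and on the number of $G$-orbits of top-dimensional simplices. Writing $\Delta = \Delta' \cup U$, where $U$ is the closed star of a single top-dimensional simplex orbit and $\Delta'$ is the complementary subcomplex, the intersection $\Delta' \cap U$ has one fewer top-dimensional orbit, so the inductive hypothesis applies. The equivariant Mayer--Vietoris sequences of Proposition \ref{prop_MV squence_Local} for the localization algebra tensored with $\mathcal{S}$ and of Lemma \ref{lem_twisted_MV squence_Local} for the twisted localization algebra, combined with the naturality of $\beta_{L,*}$ under $G$-equivariant inclusions of subcomplexes and the Five Lemma, reduce the statement to proving that $\beta_{L,*}$ is an isomorphism on the closed star of a single orbit.

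The closed star of a simplex $\sigma$ is $G$-equivariantly strongly Lipschitz homotopy equivalent, via the radial retraction onto the barycenter of $\sigma$, to the zero-dimensional complex $G/F_\sigma$, where $F_\sigma \leq G$ is the finite stabilizer of $\sigma$. By Proposition \ref{thm_Lip_hom_inva} and Lemma \ref{thm_twisted_strongly_homo_invar}, this reduces the argument to the case $\Delta = G/F$ for a finite subgroup $F \leq G$. In this zero-dimensional case, an explicit Morita equivalence at the level of Hilbert modules identifies the source of $\beta_{L,*}$ with the $K$-theory of $(C(Y)\widehat{\otimes}C(X)\widehat{\otimes}B\widehat{\otimes}\mathcal{S})\rtimes_r F$ and the target with the $K$-theory of $(C(Y)\widehat{\otimes}\mathcal{A}(X,\mathcal{H})\widehat{\otimes}B)\rtimes_r F$, under which $\beta_{L,*}$ corresponds to the map induced on $F$-equivariant $K$-theory by the fiber-wise Bott asymptotic morphism.

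Finally, since the $F$-action on the coefficients $C(X)$ and $\mathcal{A}(X,\mathcal{H})$ factors through the finite subgroup $F/(F\cap N) \leq G/N$, an iterated crossed product decomposition reduces the isomorphism in question to Theorem \ref{thm,eq_Bott_isom_fini_gro} applied to $\Gamma_0 = F/(F\cap N)$, tensored with the general coefficient $B$. I expect this last reduction to be the main technical obstacle: verifying the descent from $F$-equivariance to $F/(F\cap N)$-equivariance in the presence of an arbitrary $G$-$C^*$-algebra coefficient $B$ on which $F\cap N$ may act non-trivially, and checking that the Morita equivalence intertwines the two Bott asymptotic morphisms. A naturality argument based on the fact that the Bott isomorphism in Theorem \ref{thm,eq_Bott_isom_fini_gro} is implemented by an invertible asymptotic morphism which can be tensored with $B$, together with careful bookkeeping of the $F$-actions under the Morita equivalence, should suffice.
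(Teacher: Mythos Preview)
Your approach is essentially identical to the paper's: induction on the simplicial structure via the Mayer--Vietoris sequences (Proposition \ref{prop_MV squence_Local} and Lemma \ref{lem_twisted_MV squence_Local}), strong Lipschitz homotopy invariance (Proposition \ref{thm_Lip_hom_inva} and Lemma \ref{thm_twisted_strongly_homo_invar}) to contract to zero-dimensional orbits, and Theorem \ref{thm,eq_Bott_isom_fini_gro} for the base case. The paper's proof is a one-line sketch naming exactly these ingredients.

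Two minor remarks. First, your initial reduction to $G$-cocompact subcomplexes is superfluous: $P_d(G)$ is already $G$-cocompact and finite-dimensional (balls of radius $d$ in $G$ are finite), so the induction can begin immediately. Second, the obstacle you anticipate in the final step is lighter than you suggest. There is no need to descend from $F$ to $F/(F\cap N)$: simply apply Theorem \ref{thm,eq_Bott_isom_fini_gro} with $\Gamma = G$ (acting on $X$ through the quotient map) and $\Gamma_0 = F$. The extra coefficient $C(Y)\widehat{\otimes}B$ is handled because the proof of Theorem \ref{thm,eq_Bott_isom_fini_gro} actually produces an inverse asymptotic morphism $\alpha$ (the fiber-wise Dirac map of Section 3.2), and both $\beta$ and $\alpha$ tensor with any $\Gamma_0$-$C^*$-algebra; this is the standard Higson--Kasparov--Trout mechanism and requires no additional bookkeeping beyond what the paper already carries out.
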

\begin{proof}
 By induction on the dimension of the space $P_d(G)$, the theorem is a consequence of the fact that the $K$-theory of localization algebras is invariant under the strong Lipschitz homotopy equivalence (see Lemma \ref{thm_twisted_strongly_homo_invar}), Theorem \ref{thm,eq_Bott_isom_fini_gro}, the twisted Mayer--Vietoris sequence (see Lemma \ref{lem_twisted_MV squence_Local}) and the Five Lemma.
\end{proof}

Let us recall the commutative diagram which is obvious from the definition of twisted localization algebras and twisted Roe algebras.

\begin{equation*}{\small}
\xymatrix{
K_{\ast}(C^{\ast}_L(P_d(G), G, C(Y)\widehat{\otimes}C(X)\widehat{\otimes}B)\widehat{\otimes}\mathcal{S}))\ar[d]^{\beta_{L,*}}\ar[r]^{ev_{\ast}}&
K_{\ast}(C^*(P_d(G), G, C(Y)\widehat{\otimes} C(X)\widehat{\otimes}B) \widehat{\otimes}\mathcal{S})\ar[d]^{\beta_*}\\
K_{\ast}(C^{\ast}_{L}(P_d(G),C(Y)\widehat{\otimes} \mathcal{A}(X, \mathcal{H})\widehat{\otimes}B)^G)\ar[r]^{ev_{\ast}}& K_{\ast}(C^*(P_d(G),C(Y)\widehat{\otimes} \mathcal{A}(X, \mathcal{H})\widehat{\otimes}B)^G).
}
\end{equation*}

In the above diagram, the vertical map $\beta_{L,*}$ and the bottom horizontal map are isomorphisms. As a result, we have that the Novikov conjecture holds  for $G$ with coefficients in $C(Y) \widehat{\otimes} C(X)\widehat{\otimes}B$.

\begin{lem}\label{lem_BC_for_C(Y)timesC(X)}
  The map
  $$
  ev_*: \displaystyle\lim_{d \to \infty}K_{\ast}(C^{\ast}_L(P_d(G), G, C(Y)\widehat{\otimes}C(X)\widehat{\otimes}B)) \to \displaystyle\lim_{d \to \infty}K_{\ast}(C^{\ast}(P_d(G), G, C(Y)\widehat{\otimes}C(X)\widehat{\otimes}B)),
  $$
 induced by the evaluation-at-zero map on $K$-theory is injective.
\end{lem}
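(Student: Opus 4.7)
The plan is to chase the commutative square displayed immediately before the lemma, then strip off the trailing $\widehat{\otimes}\mathcal{S}$ by a naturality argument; all statements are taken after passing to the inductive limit $\lim_{d\to\infty}$. In the displayed square, the left vertical arrow $\beta_{L,*}$ is an isomorphism by the proposition immediately preceding the lemma (twisted Bott periodicity for localization algebras, established by an induction on the dimension of $P_d(G)$ from Lemma~\ref{lem_twisted_MV squence_Local}, Lemma~\ref{thm_twisted_strongly_homo_invar}, and Theorem~\ref{thm,eq_Bott_isom_fini_gro}), and the bottom horizontal arrow $ev_*$ is an isomorphism by the twisted Baum--Connes Theorem~\ref{thm_twist_BC}. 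Commutativity therefore forces $\beta_*\circ ev_* = ev_*\circ\beta_{L,*}$ to be an isomorphism, so the upper horizontal arrow
\[
ev_*:\lim_{d\to\infty}K_*\!\bigl(C^*_L(P_d(G),G,C(Y)\widehat{\otimes}C(X)\widehat{\otimes}B)\widehat{\otimes}\mathcal{S}\bigr)\longrightarrow \lim_{d\to\infty}K_*\!\bigl(C^*(P_d(G),G,C(Y)\widehat{\otimes}C(X)\widehat{\otimes}B)\widehat{\otimes}\mathcal{S}\bigr)
\]
is (split) injective, with splitting given by the composition of $\beta_*$ with the inverse of $ev_*\circ\beta_{L,*}$.

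\medskip

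To eliminate the trailing factor $\mathcal{S}$, I invoke the Bott periodicity isomorphism for the graded $C^*$-algebra $\mathcal{S}$: external multiplication by the Bott element of $K_0(\mathcal{S})$ defines, for every graded $C^*$-algebra $A$, a natural isomorphism $\sigma_A:K_*(A)\xrightarrow{\cong}K_*(A\widehat{\otimes}\mathcal{S})$. Applying naturality of $\sigma$ to the $*$-homomorphism $ev$ and passing to $\lim_{d\to\infty}$ yields the commutative diagram
\begin{equation*}
\begin{tikzcd}
\displaystyle\lim_d K_*(C^*_L(P_d(G),G,C(Y)\widehat{\otimes}C(X)\widehat{\otimes}B))\arrow[r,"ev_*"]\arrow[d,"\sigma"]&\displaystyle\lim_d K_*(C^*(P_d(G),G,C(Y)\widehat{\otimes}C(X)\widehat{\otimes}B))\arrow[d,"\sigma"]\\
\displaystyle\lim_d K_*(C^*_L(\cdots)\widehat{\otimes}\mathcal{S})\arrow[r,"ev_*"]&\displaystyle\lim_d K_*(C^*(\cdots)\widehat{\otimes}\mathcal{S}),
\end{tikzcd}
\end{equation*}
whose vertical arrows are isomorphisms. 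Since the bottom $ev_*$ is injective by the previous step, so is the top $ev_*$, which is exactly the claim of the lemma.

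\medskip

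The substantive content has been entirely absorbed into the two inputs $\beta_{L,*}$ and Theorem~\ref{thm_twist_BC}. The only remaining point requiring any care is the naturality of the second step: one must verify that $\sigma$ and $ev$ commute on the nose at the level of $*$-homomorphisms, but this is routine because $\sigma$ is an exterior construction in the $\mathcal{S}$-variable while $ev$ acts only on the localization parameter, so no genuinely new geometric input on $N$ or $G/N$ enters the proof of the lemma.
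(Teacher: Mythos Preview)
Your argument is correct and matches the paper's approach: the lemma is stated immediately after the commutative square and the sentence ``the vertical map $\beta_{L,*}$ and the bottom horizontal map are isomorphisms,'' so the paper's intended proof is precisely the diagram chase you carry out. You are in fact more careful than the paper, which silently passes from the statement with the trailing $\widehat{\otimes}\mathcal{S}$ to the one without; your second step making this explicit via the natural isomorphism $K_*(A)\cong K_*(A\widehat{\otimes}\mathcal{S})$ is the right way to close that gap.
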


In the rest of this section, we will reduce the Novikov conjecture to Lemma \ref{lem_BC_for_C(Y)timesC(X)}.
By identifying the $C^*$-algebra $C(Y) \widehat{\otimes}C(X)$ with $C(Y \times X)$, one can define a map $c: \mathbb{C} \to C(Y) \widehat{\otimes} C(X)$ mapping each constant $s \in \mathbb{C}$ to the constant function with value $s$ on $Y \times X$. By tensoring with the identity map on compact operators,  we can define a map $c\widehat{\otimes} 1: K_G \to C(Y) \widehat{\otimes} C(X) \widehat{\otimes}K_G$.

Note that $Y \times X$ admits a $G$-action and it is $G_0$-contractible for any finite subgroup $G_0 \leq G$.
The map $c\otimes 1$ induces a homomorphism between the Roe algebras $$\widetilde{c}: C^*(P_d(G), G, B) \to C^*(P_d(G), G, C(Y) \widehat{\otimes} C(X)\widehat{\otimes}B)$$ given by
$$\widetilde{c}(T)(x,y)=c\widehat\otimes1 (T_{x,y}),$$
for $x,y \in Z_d$, $T=\big(T_{x,y}\big)_{x,y \in Z_d}$.

Similarly, one can define a localized version of the homomorphism
$$\widetilde{c}_L: C_L^*(P_d(G), G, B) \to C_L^*(P_d(G), G, C(Y)\widehat{\otimes}C(X)\widehat{\otimes}B),$$
by $$\widetilde{c}_L(g)(t)=\widetilde{c}(g(t)),$$
where $g \in C_L^*(P_d(G), G, B)$.

\begin{lem}\label{lem_localization_infinite_to_finite}
  Let $G_0\leq G$ be a finite subgroup, and $B$ any $G$-$C^*$-algebra. If $V\subset P_d(G)$ is a $G_0$invariant and  $G_0$-contractible subcomplex such that $G\cdot V$ is homeomorphic to the space $V \times_{G_0} G$, then we have $$K_*(C_L^*(V \times_{G_0} G, G, B)) \cong K_*(C_L^*(V, G_0, B)).$$
\end{lem}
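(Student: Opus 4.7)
The plan is to build a direct $*$-isomorphism of localization algebras
\[
\Phi_L : C_L^*(V \times_{G_0} G, G, B) \xrightarrow{\;\cong\;} C_L^*(V, G_0, B),
\]
from which the $K$-theoretic statement is an immediate consequence. The strategy is compression to the ``slice'' $V$ inside $G\cdot V$, combined with extension by $G$-equivariance.

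First, I choose compatible admissible covariant systems for the two algebras. Fix a countable $G_0$-invariant dense subset $Z_V \subset V$, and a transversal $S \subset G$ for the left cosets $G/G_0$ with $e \in S$. The hypothesis $G \cdot V \cong V \times_{G_0} G$ then identifies $Z := \bigsqcup_{s \in S} sZ_V$ with a $G$-invariant countable dense subset of $G \cdot V$. For the right-hand side, the finiteness of $G_0$ allows me, via Remark~\ref{rem_Roe_independent_on_admissible_system}, to replace the standard $\ell^2(G_0)$-factor in the admissible module by $\ell^2(G) \cong \ell^2(G_0) \otimes \ell^2(S)$ without changing the Roe algebra up to $*$-isomorphism. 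Thus both $C^*(V \times_{G_0} G, G, B)$ and $C^*(V, G_0, B)$ are realized on Hilbert modules of a uniform shape over $\ell^2(G)$.

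The heart of the argument is the compression-and-extension construction. Given $T \in C^*_{\mathrm{alg}}(V \times_{G_0} G, G, B)$, the $G$-invariance $g \cdot T_{g^{-1}y, g^{-1}z} = T_{y,z}$ combined with the decomposition $Z = \bigsqcup_{s \in S} sZ_V$ shows that $T$ is determined completely by the submatrix $(T_{y_0, z})_{y_0 \in Z_V,\, z \in Z}$, which is itself $G_0$-equivariant. I define $\Phi(T)$ to be this submatrix, reinterpreted as an operator on the admissible module for $(V, G_0)$. One checks directly that $\Phi$ preserves the $*$-structure, does not increase propagation, and sends locally compact to locally compact. Conversely, any $T' \in C^*_{\mathrm{alg}}(V, G_0, B)$ extends uniquely by $G$-invariance: for $s_1 y_0, s_2 z_0 \in Z$ with $y_0, z_0 \in Z_V$, write $s_1^{-1}s_2 = s_3 g_0$ with $s_3 \in S$ and $g_0 \in G_0$ and declare
\[
\Psi(T')_{s_1 y_0,\, s_2 z_0} := s_1 \cdot T'_{y_0,\, g_0 z_0}\quad\text{if }s_3 = e,
\]
and zero otherwise; the $G_0$-invariance of $T'$ guarantees this is independent of how $s_1^{-1}s_2$ is decomposed. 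One verifies that $\Phi$ and $\Psi$ are mutually inverse and extend to a $*$-isomorphism $C^*(V \times_{G_0} G, G, B) \cong C^*(V, G_0, B)$.

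Finally, since $\Phi$ is purely a bookkeeping operation that does not increase propagation, it can be applied slice-by-slice to a path $t \mapsto T(t)$ in the localization algebra; the propagation of $\Phi(T(t))$ still tends to zero as $t \to \infty$, and uniform continuity in $t$ is preserved. The same holds for $\Psi$, producing the desired $*$-isomorphism $\Phi_L$ between localization algebras, and hence the $K$-theory isomorphism. The principal obstacle is the combinatorial verification that $\Psi$ is well-defined independent of the choice of transversal $S$ and of the decomposition $s_1^{-1}s_2 = s_3 g_0$; once this is confirmed, all the $C^*$-algebraic properties — $*$-preservation, propagation control, local compactness, continuity — follow by direct inspection.
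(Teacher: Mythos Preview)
Your approach has a genuine gap: the Roe algebras $C^*(V\times_{G_0}G,G,B)$ and $C^*(V,G_0,B)$ are \emph{not} $*$-isomorphic in general, so no bookkeeping map $\Phi$ can be an isomorphism at that level. Take $G=\mathbb{Z}$, $G_0=\{e\}$, $V=\{e\}$, $B=\mathbb{C}$: then $C^*(V,G_0,\mathbb{C})\cong\mathcal{K}$ while $C^*(G\cdot V,G,\mathbb{C})\cong C^*_r(\mathbb{Z})\otimes\mathcal{K}\cong C(S^1)\otimes\mathcal{K}$, and these have different $K_1$. The concrete failure in your construction is that $\Psi$ as you wrote it produces only operators supported on the block-diagonal $\bigsqcup_{s\in S} sZ_V\times sZ_V$ (since $s_3=e$ forces $s_1=s_2$), whereas a general $T\in C^*_{\mathrm{alg}}(V\times_{G_0}G,G,B)$ has off-block entries $T_{y_0,sz_0}$ with $s\neq e$ whenever its propagation exceeds the distance between distinct translates $V$ and $sV$. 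Your $\Phi$ retains this information in the column index $z\in Z$, but $\Psi$ discards it, so $\Psi\circ\Phi\neq\mathrm{id}$. Absorbing $\ell^2(S)$ into the Hilbert-space factor does not fix this: the resulting operator on the $(V,G_0)$-module then fails to have controlled propagation, since $d(y_0,sz_0)\le r$ gives no bound on $d(y_0,z_0)$.

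The paper's proof works only at the $K$-theory level and only for the localization algebras, exploiting precisely the feature that your compression ignores: since $\mathrm{propagation}(a(t))\to 0$ and the translates $\{sV\}_{s\in S}$ are disjoint, there is a $T_0$ with $\mathrm{supp}(a(t))\subset\bigsqcup_s sV\times sV$ for all $t\ge T_0$. The reparametrization homotopy $a_s(t)=a(t+sT_0)$ then connects $[a+I]$ to a class represented by a genuinely block-diagonal path, on which your $\imath$-type inclusion from $C_L^*(V,G_0,B)$ is surjective by $G$-invariance. That homotopy step is the missing idea; without it the off-block terms at small $t$ cannot be controlled.
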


\begin{proof}
Define a homomorphism
$$\imath: C_L^*(V, G_0, B) \to C_L^*(V_1 \times_{ G_0} G, G, B)$$
by
\[ \imath(b(t))_{x,y} =
 \begin{cases*}
    g^{-1}(b(t))_{gx,gy} \quad& if $\exists $ $g \in G  $  such that $gx, gy \in V$,\\
    0 & otherwise,
\end{cases*}\]
for all $b(t) \in C_L^*(V, G_0, B)$. Any element in $K_1(C_L^*(V \times_{G_0}G, G, B)) $ can be represented by an invertible element $a+I$, for some $a \in C_L^*(V \times_{G_0} G, G, B)$. Since the propagation of $a(t)$ approaches $0$ as $t \to \infty$, we can find a large constant $T_0$ such that $\text{supp}(a(t)) \subset \bigsqcup_{g \in G/G_0} gV \times gV$ for all $t \geq T_0$. By uniform continuity of the path $a(t)$, $a_s(t) = a(t+ s T_0)$ ($s \in [0, 1]$) is a homotopy between $a(t)$ and $a(t +T_0)$. Thus, any element in $K_1(C_L^*(V \times_{G_0} G, G, B))$ can be represented by an invertible element $b+I \in \big(C_L^*(V \times_{G_0} G, G, B)\big)^+$ with $\text{supp}(b(t)) \subset \bigsqcup_{g \in G/G_0} gV_1 \times gV_1$. Since $b(t)$ is $G$-invariant for all $t \in [0, \infty)$, we can find an element $b' \in C_L^*(V, G_0, B)$ such that $\imath_*([b' + I])=[b +I]$. Consequently, the map $\imath$ is onto, and we can similarly prove that it is injective. Therefore we have $$K_1(C_L^*(V \times_{G_0} G, G, B)) \cong K_1(C_L^*(V, G_0, B)).$$ The $K_0$ case can be dealt with by a suspension argument.
\end{proof}

The following result was proved originally using $E$-theory (\cite{GHT}, Lemma 12.11). We now give an alternative proof using localization algebras.
\begin{prop}
The map
$$(\widetilde{c}_L)_*:\lim_{d \to \infty} K_*(C_L^*(P_d(G), G, B)) \to \lim_{d \to \infty}
K_*(C_L^*(P_d(G), G, C(Y)\widehat{\otimes}C(X)\widehat{\otimes}B))$$
on $K$-theory induced by $\widetilde{c}_L$ is an isomorphism.
\end{prop}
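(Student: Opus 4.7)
The plan is to induct on the dimension of skeleta of $P_d(G)$ via the equivariant Mayer--Vietoris sequence, reducing the claim to the case of a single $G$-orbit of simplices, and then to apply strong Lipschitz homotopy invariance together with the $G_0$-contractibility of $Y$ and $X$ for every finite subgroup $G_0 \leq G$.

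First, since $c: \mathbb{C} \to C(Y) \widehat\otimes C(X)$ is a $G$-equivariant $\ast$-homomorphism of coefficient algebras, $\widetilde{c}_L$ is natural with respect to the Mayer--Vietoris sequence of Proposition \ref{prop_MV squence_Local}. Using the filtration of $P_d(G)$ by its skeleta and the Five Lemma, the theorem reduces to establishing that
$$(\widetilde{c}_L)_\ast: K_\ast\bigl(C_L^\ast(G\cdot \sigma, G, B)\bigr) \to K_\ast\bigl(C_L^\ast(G\cdot \sigma, G, C(Y)\widehat\otimes C(X)\widehat\otimes B)\bigr)$$
is an isomorphism on each single $G$-orbit $G\cdot \sigma \cong \sigma \times_{G_0} G$ of simplices in $P_d(G)$, where $G_0$ is the finite $G$-stabilizer of $\sigma$.

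Next, by Lemma \ref{lem_localization_infinite_to_finite}, both $K$-groups are naturally (with respect to coefficient maps) isomorphic to $K_\ast\bigl(C_L^\ast(\sigma, G_0, B)\bigr)$ and $K_\ast\bigl(C_L^\ast(\sigma, G_0, C(Y) \widehat\otimes C(X) \widehat\otimes B)\bigr)$. Since $\sigma$ is convex, it is $G_0$-contractible via averaging to an interior $G_0$-fixed point $p$. By Proposition \ref{thm_Lip_hom_inva}, the inclusion $\{p\} \hookrightarrow \sigma$ induces an isomorphism on the $K$-theory of the localization algebras, and the local index isomorphism of Proposition \ref{thm_local_index_isom} identifies $K_\ast\bigl(C_L^\ast(\{p\}, G_0, A)\bigr)$ with $KK^{G_0}_\ast(\{p\}, A) \cong K_\ast(A \rtimes G_0)$ for any $G_0$-$C^\ast$-algebra $A$. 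Thus the problem is reduced to showing that $c: \mathbb{C} \to C(Y) \widehat\otimes C(X)$ induces an isomorphism $K_\ast(B \rtimes G_0) \cong K_\ast\bigl((C(Y) \widehat\otimes C(X) \widehat\otimes B) \rtimes G_0\bigr)$.

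Finally, every factor $Y_{N'}$ of $Y = \prod_{N' \in \mathcal{F}} Y_{N'}$ is $G_0$-contractible via averaging on its convex structure of probability measures, and $X$ is likewise $G_0$-contractible by construction; hence $Y \times X$ is $G_0$-contractible to a $G_0$-fixed point $(y_0, x_0)$. The contraction provides a $G_0$-equivariant homotopy of $\ast$-homomorphisms between $\mathrm{id}_{C(Y) \widehat\otimes C(X)}$ and the composition of evaluation at $(y_0, x_0)$ with $c$; tensoring with $B$ and applying homotopy invariance of $K$-theory for finite-group crossed products then shows that $c_\ast$ is the desired isomorphism. The main technical subtlety is in the base-case bookkeeping: one must verify that the identification $K_\ast(C_L^\ast(\{p\}, G_0, A)) \cong K_\ast(A \rtimes G_0)$ is natural in the $G_0$-equivariant coefficient $A$, so that $(\widetilde{c}_L)_\ast$ corresponds under these isomorphisms precisely to the map induced by $c$ on crossed products.
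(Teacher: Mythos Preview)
Your proposal is correct and follows essentially the same approach as the paper: both arguments use the equivariant Mayer--Vietoris sequence for localization algebras to reduce to pieces of the form $V \times_{G_0} G$ with $G_0$ finite, invoke Lemma~\ref{lem_localization_infinite_to_finite} and strong Lipschitz homotopy invariance to pass to a point, and then use the $G_0$-contractibility of $Y \times X$ to conclude at the crossed-product level. The only cosmetic differences are that the paper phrases the reduction via a finite open cover (from cocompactness) rather than induction on skeleta, and cites the Green--Julg theorem directly instead of routing through Proposition~\ref{thm_local_index_isom}; your flagged naturality check is exactly what is needed and is implicit in the paper as well.
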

 \begin{proof}
 When $d$ is large enough, there exist finitely many precompact open subsets $V_i$, and finite subgroups $G_i \leq G$, $i=1, \ldots, k$, such that $P_d(G)=\bigcup_{i=1}^{k} V_i \times_{G_i} G$ and each $V_i$ is $G_i$-contractible by a strong Lipschitz $G_i$-homotopy equivalence. The existence of subsets $V_i$ and finite subgroups $G_i$ is guaranteed by properness of the $G$-action on $P_d(G)$.
By the Mayer--Vietoris sequence, it suffices to show that the map $(\widetilde{c}_L)_*: K_*(C_L^*(V_i\times_{G_i}G, G, B))\to K_*(C_L^*(V_i\times_{G_i}G, G, C(Y \times X)\widehat{\otimes}B))$ is an isomorphism for each $i$. Without loss of generality, it suffices to show this for $i=1$.

By Lemma \ref{lem_localization_infinite_to_finite}, we have
$$K_*(C_L^*(V_1 \times_{G_1} G, G, B)) \cong K_1*C_L^*(V_1, G_1, B)),$$
 and
$$K_*(C_L^*(V_1 \times_{G_1} G, G, C(Y \times X)\widehat{\otimes}B)) \cong K_*(C_L^*(V_1, G_1, C(Y \times X)\widehat{\otimes}B)).$$
It suffices to show that
$$K_*(C_L^*(V_1, G_1, B))\cong K_*(C_L^*(V_1, G_1, C(Y \times X)\widehat{\otimes}B)).$$
By the $G_1$-contractibility of $V_1$, we have
$$K_*(C_L^*(V_1, G_1, B))\cong K_*(C_L^*(pt, G_1, B)),$$
and
$$K_*(C_L^*(V_1, G_1, C(Y \times X)\widehat{\otimes}B))\cong K_*(C_L^*(pt, G_1, C(Y \times X)\widehat{\otimes}B)).$$
It suffices to show that $K_*(C_L^*(pt, G_1, B)\cong K_*(C_L^*(pt, G_1, C(Y \times X)\widehat{\otimes}B)$.
Let $c: \mathbb{C} \to C(Y \times X)$ be the $*$-homomorphism given by mapping each constant $s \in \mathbb{C}$ to the constant function with value $s$ on $Y \times X$. Let $c': C(Y \times X) \to \mathbb{C}$ be the $*$-homomorphism obtained by evaluation at the point $(y_0, x_0)$, where the space $Y \times X$ contracts to a point $(y_0, x_0) \in Y \times X$ via some $G_1$-equivariant homotopy equivalence.  So we have the $*$-homomorphisms on the reduced crossed product
$$c: B\rtimes_{\operatorname{r}} G_1 \to (C(Y \times X)\widehat{\otimes}B) \rtimes_{\operatorname{r}} G_1$$
and
$$c': (C(Y \times X)\widehat{\otimes}B)\rtimes_{\operatorname{r}} G_1 \to B \rtimes_{\operatorname{r}} G_1.$$
Note that, on the $K$-theory level, the compositions $c_* \circ c'_*$ and $c'_* \circ c_*$ are identity maps.
By the Green-Julg Theorem (see \cite{GHT}),
$$ev_*: K_*(C_L^*(pt, G_1, B)) \to K_*(C^*(pt, G_1, B))$$
and
$$
ev_*:K_*(C_L^*(pt, G_1, C(Y \times X)\widehat{\otimes}B))\to K_*(C^*(pt, G_1, C(Y \times X)\widehat{\otimes}B))
$$
are isomorphisms on $K$-theory, where $ev_*$ is the homomorphism induced by the evaluation-at-zero map.
Since we have
$$C^*(pt, G_1, B)\cong (B \rtimes_{\operatorname{r}}G_1) \otimes \mathcal{K},$$
and
$$C^*(pt, G_1, C(Y \times X)\widehat{\otimes}B)\cong ((C(Y \times X)\widehat{\otimes}B) \rtimes_{\operatorname{r}}G_1) \otimes \mathcal{K}.$$
In addition, the map
$$(\widetilde{c}_L)_*:K_*(C_L^*(pt, G_1, B))\to K_*(C_L^*(pt, G_1, C(Y \times X)\widehat{\otimes}B))
$$
is an isomorphism.
As a result,
$$(\widetilde{c}_L)_*:K_*(C_L^*(V_i\times_{G_i}G, G, B))\to K_*(C_L^*(V_i, G, C(Y \times X)\widehat{\otimes}B))
$$
is an isomorphism for each $i$.
The proof is completed using the Mayer--Vietoris sequence (see Proposition \ref{prop_MV squence_Local}) and the Five Lemma.
\end{proof}

According to Proposition \ref{thm_local_index_isom}, the following result implies Theorem \ref{main_result}.
\begin{thm}
The map
$$ev_*:\lim_{d \to \infty} K_*(C_L^*(P_d(G), G, B))\to \lim_{d \to \infty} K_*(C^*(P_d(G), G, B))$$
induced by the evaluation-at-zero map on $K$-theory is injective.
\end{thm}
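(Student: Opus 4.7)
The plan is to deduce this injectivity from a diagram chase, comparing the evaluation-at-zero map for coefficients $B$ with the corresponding map for the auxiliary coefficients $C(Y)\widehat{\otimes}C(X)\widehat{\otimes}B$, where injectivity has already been established in Lemma \ref{lem_BC_for_C(Y)timesC(X)}.

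First I would record the naturality square
\begin{equation*}
\begin{tikzcd}
\displaystyle\lim_{d\to\infty} K_{\ast}(C^{\ast}_L(P_d(G), G, B))\ar[d]^{(\widetilde{c}_L)_*}\ar[r]^{ev_{\ast}} &
\displaystyle\lim_{d\to\infty} K_{\ast}(C^{\ast}(P_d(G), G, B))\ar[d]^{\widetilde{c}_*} \\
\displaystyle\lim_{d\to\infty} K_{\ast}(C^{\ast}_L(P_d(G), G, C(Y)\widehat{\otimes}C(X)\widehat{\otimes}B))\ar[r]^{ev_{\ast}} &
\displaystyle\lim_{d\to\infty} K_{\ast}(C^{\ast}(P_d(G), G, C(Y)\widehat{\otimes}C(X)\widehat{\otimes}B))
\end{tikzcd}
\end{equation*}
whose commutativity is immediate from the explicit matrix-level formulas defining $\widetilde{c}$ and $\widetilde{c}_L$: both are induced by the unital $G$-equivariant $*$-homomorphism $\mathbb{C}\to C(Y\times X)$ sending a scalar to the corresponding constant function, applied entry-wise, and this insertion plainly intertwines evaluation at $t=0$ with itself.

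Next I would invoke the two pre-existing ingredients. The left vertical arrow $(\widetilde{c}_L)_*$ is an isomorphism by the proposition proved immediately before the theorem, whose argument covers $P_d(G)$ by $G_i$-contractible slices $V_i \times_{G_i} G$, uses the Mayer--Vietoris sequence (Proposition \ref{prop_MV squence_Local}) to reduce to computing $K_*(C_L^*(\text{pt}, G_i, -))$, and then applies the Green--Julg theorem together with the $G_i$-equivariant contractibility of $Y\times X$ to identify both $K$-theories. The bottom horizontal arrow is injective by Lemma \ref{lem_BC_for_C(Y)timesC(X)}, whose input is the commutative square between evaluation maps and Bott maps in which the vertical Bott arrow $\beta_{L,*}$ on the localization side is an isomorphism and the bottom evaluation is the twisted Baum--Connes isomorphism of Theorem \ref{thm_twist_BC}.

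The diagram chase now closes the argument in one line: if $x$ lies in the top left $K$-group and $ev_*(x)=0$, then by commutativity $ev_*((\widetilde{c}_L)_*(x))=\widetilde{c}_*(ev_*(x))=0$; by injectivity of the bottom $ev_*$ we obtain $(\widetilde{c}_L)_*(x)=0$; and since $(\widetilde{c}_L)_*$ is an isomorphism, $x=0$. All of the genuinely hard work has already been done upstream: constructing the compact $G$-space $Y$ for which $Y\rtimes N'$ is a-T-menable for every finite-index overgroup $N'\supset N$, setting up the twisted Roe and localization algebras so that the ideal decomposition along $G/N$-orbits in $W$ is compatible with Tu's groupoid a-T-menability theorem, and establishing the geometric Bott isomorphism on the localization side. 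The step at hand is the purely formal diagram chase that packages these inputs; the only subtlety worth double-checking is that $\widetilde c$ and $\widetilde c_L$ really are defined at the chain level (not merely on $K$-theory), so that the outer square commutes before passing to $K$-theory, and this is visible from the pointwise definitions.
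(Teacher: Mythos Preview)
Your proposal is correct and follows essentially the same approach as the paper: set up the naturality square involving $(\widetilde{c}_L)_*$, $\widetilde{c}_*$, and the two evaluation maps, then use that $(\widetilde{c}_L)_*$ is an isomorphism and the bottom $ev_*$ is injective (Lemma~\ref{lem_BC_for_C(Y)timesC(X)}) to conclude. The paper's own proof is the same diagram chase, stated in two lines without the surrounding commentary.
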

\begin{proof}
We have the following commutative diagram
\begin{equation*}
\xymatrix{
	\displaystyle\lim_{d \to \infty}K_{\ast}(C^{\ast}_L(P_d(G), G, B))\ar[d]^{(\widetilde{c}_L)_*}\ar[r]^{ev_{\ast}}& \displaystyle\lim_{d \to \infty}K_{\ast}(C^*(P_d(G), G, B))\ar[d]^{\widetilde{c}_*}\\
	\displaystyle\lim_{d \to \infty}K_{\ast}(C^{\ast}_{L}(P_d(G), G, C(Y \times X)\widehat{\otimes}B))\ar[r]^{ev_{\ast}}& \displaystyle\lim_{d \to \infty}K_{\ast}(C^*(P_d(G), G, C(Y \times X)\widehat{\otimes}B)).
}
\end{equation*}
Since the map $(\widetilde{c}_L)_*$ is an isomorphism, and the lower horizontal map is injective, the upper horizontal map is injective.
\end{proof}

\subsection*{Acknowledgement} The author would like to acknowledge Guoliang Yu for his invaluable guidance and constant support. He also thanks Zhizhang Xie and Geng Tian for useful conversations and thanks the anonymous referees whose comments and suggestions helped to greatly improve the paper.



\bibliographystyle{abbrv}
\bibliography{ref}

\end{document}